\theoremstyle{plain}
\newtheorem*{conj*}{Conjecture}
\newtheorem*{cor*}{Corollary}
\newtheorem{theorem}{Theorem}[section]
\newtheorem{prop}[theorem]{Proposition}
\newtheorem{proposition}[theorem]{Proposition}
\newtheorem{corollary}[theorem]{Corollary}
\newtheorem*{q*}{Question}
\theoremstyle{definition}
\newtheorem{definition}[theorem]{Definition}
\newtheorem*{def*}{Definition}
\newtheorem{remark}[theorem]{Remark}
\newcommand{\R}{\mathbb{R}}     
\newcommand{\N}{\mathbb{N}}   
\newcommand{\h}{\mathcal{H}} 
\newcommand{\x}{\mathcal{X}}
\def\d{\mbox{diam}}
\renewcommand{\epsilon}{\varepsilon}
\newcommand{\eps}{\varepsilon}
\newcommand{\diam}{\operatorname{diam}}
\title{Stable/unstable continua of cw-expansive flows}
\author{A. Artigue, B. Carvalho, and M. Tacuri}
\date{\today}
\thanks{2020 \emph{Mathematics Subject Classification}: Primary 37B05; Secondary 37B45, 37C10.}
\keywords{Cw-expansiveness, Flows, stable/unstable sets, continua.}
\begin{document}
\begin{abstract}
We introduce distinct definitions of local stable/unstable sets for flows without fixed points, namely, kinematic, geometric, and sectionally geometric, and discuss relations between them. We prove the existence of continua with a uniform diameter within each sectionally geometric local stable/unstable set for cw-expansive flows defined on Peano continua. 
\end{abstract}

\maketitle
\vspace{-0.4cm}

\section{Introduction}

In chaotic systems, the study of distinct initial conditions with orbits that separate is essential to understanding their dynamics. Indeed, they appear in the study of several dynamical properties and invariants of such systems, such as the topological entropy, sensitivity to initial conditions, and expansiveness, among others. The study of points with orbits that, instead, do not separate in the future/past also helps in this endeavor, and the understanding of the structure of local stable/unstable sets is central in the field of dynamical systems. In hyperbolic systems, the Stable Manifold Theorem ensures that local stable/unstable sets are manifolds tangent to the stable/unstable spaces, and a big part of the hyperbolic dynamics can be obtained from this structure (see \cite{Robinson} for example). In the classification of expansive surface homeomorphisms of Hiraide/Lewowicz, local stable/unstable sets are part of a pair of transversal singular foliations with a finite number of singularities and they conclude that the homeomorphism is pseudo-Anosov (see \cite{Hiraide1} and \cite{LewowiczSurfaces}).

In generalizations of hyperbolicity and expansiveness, the structure of local stable/unstable sets can be much more complicated. This can be seen for example in Walter's pseudo-Anosov diffeomorphism of $\mathbb{S}^2$ (see \cite{Walters}), which is an example of a continuum-wise expansive homeomorphism, where local stable/unstable sets can contain a cantor set of disjoint arcs (see \cite{ArtigueDend}). Cw-expansiveness was introduced by H. Kato in \cites{Kato93,Kato93B} in the context of homeomorphisms and further explored in \cites{AAV,ArtigueDend,ACCV,ACCV2,ACCV3,CC,CC2,CR}, among others. An important result in this matter is the existence of a continuum (compact and connected) with uniform diameter inside each local stable/unstable set. In the case of continuous flows, M. Paternain proved in \cite{PM}*{Lemma 4} the non-existence of stable points for expansive flows, and as a consequence, obtained in \cite{PM}*{Lemma 5} the existence of a continuum inside each local stable/unstable set. The study of cw-expansive flows started in \cite{Cwe1} where the existence of one non-trivial local unstable continuum was obtained (in Lemma 6.4) to prove that the topological entropy of cw-expansive flows in spaces of topological dimension greater than one is positive. In this article, we generalize all these results proving that cw-expansive flows defined on a Peano continuum have a continuum within each local stable and local unstable set.



\textit{Contents of the paper}. In Section 2, we study local stable sets and some
basic notions about flows. In Section 3, we prove uniform expansivity (cw-expansivity). In
Section 4, we study cw-expansive flows from the viewpoint of fields of
local transversal sections. In Section 5, we prove our main results concerning
the existence of local stable continua and the non-existence of stable
points for flows on Peano continua.

\section{Basic Properties of Local Stable and Local Unstable Sets}
In this section, we discuss distinct definitions of local stable/unstable sets for flows, namely, kinematic, geometric, and sectionally geometric. The definitions involve reparametrizations of orbits and transversal sections, so in this section, we will state some basic definitions and results involving these notions. Let $(X, \phi)$ be a flow defined in a metric space $(X,d)$, that is a continuous map $\phi: \mathbb{R} \times X \rightarrow X$, denoted as $\phi(t, x) = \phi_{t}(x)$, satisfying
\begin{enumerate}
    \item $\phi_{0}(x) = x$ for every $x \in X$ and
    \item $\phi_{s+t}(x) = \phi_{s}\left(\phi_{t}(x)\right)$ for all $s, t \in \mathbb{R}$ and every $x \in X$. 
\end{enumerate}

\begin{definition}[Kinematic stable/unstable sets]
For each $x \in X$ and $\varepsilon > 0$, we define the \textit{kinematic $\varepsilon$-stable} and \textit{kinematic $\varepsilon$-unstable} sets of $x\in X$ as follows:
$$W^s_\varepsilon(x)=\{y \in X: d(\phi_t(x), \phi_t(y))\leq \varepsilon\, \, \, \text{for all} \, \, t \geq 0\},$$
$$W^u_\varepsilon(x)=\{y \in X:  d(\phi_t(x), \phi_t(y))\leq \varepsilon\, \, \text{for all} \, \, t\leq 0\}.$$
\end{definition}

The words `kinematic' and `geometric' first appeared in various forms of defining expansive systems. In fact, we can see that they were first described in \cite{Cerlewoxicz} and later in \cite{Kinematic}. We observe that this literature used on expansive systems can help us define stable and unstable sets in different ways. 
The kinematic local stable/unstable sets are the classical notions of local stable/unstable sets of the hyperbolic dynamics, but since we are working with non-hyperbolic flows we call them kinematic to separate it from the other definitions.

We denote by $Rep(\R)$ the set of all increasing homeomorphisms $h: \R \rightarrow \R$ such that $h(0)=0$. We also denote by $Rep(\R)^{+} \left( Rep(\R)^{-}\right)$ as the set of functions that are restrictions of functions in $Rep(\R)$ to $[0, \infty)$ (resp. $(-\infty,0]$).  In the definition of local stable/unstable geometric sets that we will define next, we allow one of the orbits to undergo changes over time, that is, one of the orbits to be reparameterized.

\begin{definition}[Geometric stable/unstable sets]\label{3.0}
For each $x\in X$ and $\varepsilon>0$, we define the \textit{geometric $\varepsilon$-stable} and \textit{geometric $\varepsilon$-unstable} sets of $x\in X$ as follows:
$$W^{ss}_\varepsilon(x)=\{y \in X: \exists \ \alpha\in Rep(\R)^+ \hspace{0.2cm}\text{such that}\hspace{0.1cm}d(\phi_t(x), \phi_{\alpha(t)}(y))\leq \varepsilon\, \, \text{for all} \, \, t \geq 0\},$$
$$W^{uu}_\varepsilon(x)=\{y \in X: \exists \ \alpha\in Rep(\R)^- \hspace{0.2cm}\text{such that }\hspace{0.1cm} d(\phi_t(x), \phi_{\alpha(t)}(y))\leq \varepsilon\, \, \text{for all} \, \, t\leq 0\}.$$ Clearly, $W^s_{\eps}(x)\subset W^{ss}_{\eps}(x)$ and $W^u_{\eps}(x)\subset W^{uu}_{\eps}(x)$ for every $x\in X$.
\end{definition}
In the definition of sectionally geometric local stable/unstable sets, we will use sections transversal to the flow. The use of transversal sections is classical in the study of regular flows\footnote{We say that $\phi$ is a regular flow if, for every $x \in X$, there exists a $t \in \mathbb{R}$ such that $\phi_{t}(x) \neq x$, meaning it has no equilibrium points.} (see \cite{TR}*{Lemma 4.1}). In \cite{TR} Thomas covers the space with a finite number of box flows of transversal sections and proves several properties of local stable/unstable sets expansive flows with canonical coordinates. More recently, Artigue \cite{Aa} constructed a field of transversal sections, meaning that he considers a transversal section at every point of the space. This field of sections facilitates the proof of some known facts of expansive flows. We explain below some basic properties of this field of sections and use them to define the sectionally geometric local stable/unstable sets. 

\begin{definition}[Field of transversal sections]
Assume that $\phi: \mathbb{R} \times X \rightarrow X$ is a regular flow and let $\mathcal{K}(X)$ be the set of all compact subsets of $X$. We say that $C \in \mathcal{K}(X)$ is a \textit{transversal section} through $x \in C$ if there exist $\tau>0$ and $\gamma>0$ such that 
$$B_{\gamma}(x) \subset \phi_{[-\tau, \tau]}(C) \quad \text{and} \quad C \cap \phi_{[-\tau, \tau]}(y) = \{y\} \quad \text{for every}\,\,\,\,\, y \in C,$$
where $B_{\gamma}(x)=\{y\in X; d(y,x)\leq\gamma\}$ is the closed ball of radius $\gamma$ centered at $x$.
We call a \textit{field of transversal sections} a function $H:X\rightarrow \mathcal{K}(X)$ such that there exist $\tau'>0$ and $r>0$ such that 
\[
B_{r}(x)\subset \phi_{[-\tau',\tau']}(H(x)) \hspace{0.5cm} \text{and} \hspace{0.5cm}  H(x)\cap \phi_{[-\tau',\tau']}(y)=\{y\}
\]
for every $x\in X$ and $y\in H(x)$. If $H$ is a field of transversal sections and $\phi:[-\tau,\tau] \times H(x)\rightarrow X$ is injective, for some $\tau>0$ and every $x\in X$, then we say that $H$ is a field of transversal sections of time $\tau$. 
\end{definition}

A field of transversal sections comes together with the box flow field and the projection flow defined as follows (for more details see \cite{Aa}*{Definitions 2.29 and 2.31}).

\begin{definition}[Flow box and projection]\label{1.04}
Let $H$ be a field of transversal sections of time $\tau$. We call by \textit{flow box field} associated with $H$ the field $F:X\rightarrow \mathcal{K}(X)$ defined by $F(x)=\phi_{[-\tau, \tau]}(H(x))$ and define the \textit{projection of the flow box} $\pi_{x}: F(x) \rightarrow H(x)$ as $\pi_{x}(y)=\phi_{t}(y) \in H(x)$ with $|t| \leq \tau$. 
\end{definition}

%


Before defining the sectionally geometric local stable/unstable sets, we state the following notation: we define the $\eps$-transversal section at the point $x$ by 
$$H_{\varepsilon}(x)=H(x)\cap B_{\varepsilon}(x).$$

\begin{definition}[Sectionally geometric stable/unstable sets]\label{3.01}
	For each $x\in X$ and $\varepsilon>0$, we define the  \textit{sectionally geometric $\varepsilon$-stable} and \textit{sectionally geometric $\varepsilon$-unstable} sets by:
	$$T^{s}_\varepsilon(x)=\{y \in X: \exists \ \alpha\in Rep(\R)^+ \hspace{0.2cm}\text{such that }\hspace{0.1cm} \phi_{\alpha(t)}(y)\in H_{\varepsilon}(\phi_t(x))\, \, \text{for all} \, \, t \geq 0\},$$
	$$T^{u}_\varepsilon(x)=\{y \in X: \exists \ \alpha\in Rep(\R)^- \hspace{0.2cm}\text{such that }\hspace{0.1cm} \phi_{\alpha(t)}(y)\in H_{\varepsilon}(\phi_t(x))\, \, \text{for all} \, \, t\leq 0\}.$$ 
\end{definition}
Roughly speaking, the orbits of points in the sectionally geometric local stable/unstable sets can be reparametrized to always belong to the local transversal section of the respective iterates of $x$. Clearly, $T^s_{\eps}(x)\subset W^{ss}_{\eps}(x)$ and $T^u_{\eps}(x)\subset W^{uu}_{\eps}(x)$ for every $x\in X$. The reverse inclusion is clearly not true since we can have points with close orbits with iterates not belonging to the same local transversal section. We prove below that the projection of the geometric local stable/unstable set of $x$ to the local section of $x$ is contained in the sectionally geometric local stable/unstable set of $x$. In the proof, we need additional properties of local transversal sections that we state in what follows.

\begin{proposition}\label{1.06}
	If $H$ is a field of transversal sections of time $\tau>0$, and $\gamma>0$ is such that $B_{\gamma}(x)\subset \phi_{[-\tau,\tau]}(H(x))$ for every $x\in X$, then for each $\eta\in(0,\tau]$ and each $r\in(0,\gamma)$, there exists  $\delta>0$ such that
	$$d(x,y)\leq \delta \,\,\,\,\,\, \text{implies} \,\,\,\,\,\, H_r(y)\subset \phi_{[-\eta,\eta]}(H(x)).$$ 
\end{proposition}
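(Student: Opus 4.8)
The plan is to argue by contradiction using compactness of $X$ (which holds in the setting of this paper, where $X$ is a Peano continuum). Suppose the conclusion fails for some $\eta\in(0,\tau]$ and some $r\in(0,\gamma)$. Then for every $n\in\nat$ there are $x_n,y_n\in X$ with $d(x_n,y_n)\le 1/n$ and a point $z_n\in H_r(y_n)$ with $z_n\notin\phi_{[-\eta,\eta]}(H(x_n))$. Since $d(z_n,x_n)\le d(z_n,y_n)+d(y_n,x_n)\le r+1/n$ and $r<\gamma$, for all large $n$ we have $d(z_n,x_n)<\gamma$, hence $z_n\in B_\gamma(x_n)\subset\phi_{[-\tau,\tau]}(H(x_n))$; write $z_n=\phi_{s_n}(w_n)$ with $w_n\in H(x_n)$ and $|s_n|\le\tau$. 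Because $z_n\notin\phi_{[-\eta,\eta]}(H(x_n))$, every such representation has $|s_n|>\eta$, so $\eta<|s_n|\le\tau$.

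Next I would pass to the limit. By compactness of $X$, after extracting a subsequence we may assume $x_n\to p$, $y_n\to p$ (the limits coincide since $d(x_n,y_n)\to 0$), $z_n\to z$, $w_n\to w$, and $s_n\to s$ with $\eta\le|s|\le\tau$; continuity of $\phi$ gives $z=\phi_s(w)$. Moreover $d(z,p)=\lim d(z_n,y_n)\le r<\gamma$, so $z\in B_\gamma(p)\subset F(p)=\phi_{[-\tau,\tau]}(H(p))$ and, for large $n$, $z_n\in B_\gamma(p)\subset F(p)$ as well. Now I would invoke the continuity of the field of transversal sections from \cite{Aa}, in the form of joint continuity of the flow-box projections $(x,q)\mapsto\pi_x(q)$ of Definition \ref{1.04}. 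Applying it along $x_n\to p$, $w_n\in H(x_n)$, $w_n\to w$ gives $w=\pi_p(z)$, because $\pi_{x_n}(z_n)=w_n$ (as $z_n=\phi_{s_n}(w_n)$ with $|s_n|\le\tau$); applying it along $y_n\to p$, $z_n\in H(y_n)\subset F(y_n)$, $z_n\to z$ gives $z=\pi_p(z)$, because $\pi_{y_n}(z_n)=z_n$ (as $z_n\in H(y_n)$). Hence $w=z\in H(p)$, so $z=\phi_s(w)=\phi_s(z)$ with $z\in H(p)$ and $s,0\in[-\tau,\tau]$. Since $H$ has time $\tau$, the map $\phi\colon[-\tau,\tau]\times H(p)\to X$ is injective, whence $s=0$, contradicting $|s|\ge\eta>0$. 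This contradiction establishes the proposition; one then notes that $\delta$ can be taken at most $\gamma-r$ so that the membership $z_n\in B_\gamma(x_n)$ used above is legitimate.

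The step I expect to be the main obstacle is passing the relations $w_n\in H(x_n)$ and $z_n\in H(y_n)$ to the limit, i.e.\ placing both $z$ and $w$ inside the single section $H(p)$ so that the uniqueness/injectivity clauses in the definition of a field of transversal sections of time $\tau$ can be applied. This requires a (semi)continuity property of $x\mapsto H(x)$ — equivalently, closedness of the graph $\{(x,q):q\in H(x)\}$, or continuity of the projection $\pi$ — which is not immediate from the bare axioms; in the write-up I would either cite the corresponding property of Artigue's field of sections \cite{Aa} or deduce the needed closedness directly from the uniform constants $\tau',r,\gamma$. Once that is in hand, the remainder is a routine compactness-and-continuity argument, and the same reasoning proves the unstable analogue verbatim.
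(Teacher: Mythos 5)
The paper gives no internal argument for this proposition at all: its ``proof'' is the citation of Lemma 2.19 of \cite{Aa}. Measured on its own, your compactness scheme is the natural one and the routine steps (the estimate $d(z_n,x_n)\le r+1/n<\gamma$, the representation $z_n=\phi_{s_n}(w_n)$ with $w_n\in H(x_n)$ and $\eta<|s_n|\le\tau$, the extraction of limits, and the final use of time-$\tau$ injectivity) are all correct. The genuine gap is exactly the step you flag, and it cannot be repaired in the second way you propose: closedness of the graph $\{(x,q):q\in H(x)\}$ (equivalently, joint continuity of the projections $\pi_x$) is \emph{not} derivable from the uniform constants $\tau,\gamma$ in the definition of a field of transversal sections, and in fact the proposition is false for a bare field satisfying only those axioms. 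Take $X=[0,1]\times(\mathbb{R}/\mathbb{Z})$ with the max metric and the flow $\phi_t(k,\theta)=(k,\theta+t)$, let $f(k)=\min(k,0.06)$, and set $H(k_0,\theta_0)=[0,1]\times\{\theta_0\}$ for $k_0\ne 0$ while $H(0,\theta_0)=\{(k,\theta_0+f(k)):k\in[0,1]\}$. Each $H(p)$ is a compact transversal section through $p$, the covering and separation axioms hold uniformly with $\gamma=0.1$, $\tau=0.2$, and $\phi\colon[-\tau,\tau]\times H(p)\to X$ is injective, so this is a field of transversal sections of time $\tau$ satisfying the hypothesis $B_\gamma(x)\subset\phi_{[-\tau,\tau]}(H(x))$. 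Yet for $\eta=0.04$ and $r=0.08$ the point $z=(0.06,\,0.06)\in H_r\bigl((0,0)\bigr)$ never belongs to $\phi_{[-\eta,\eta]}\bigl(H((k_1,0))\bigr)=[0,1]\times[-0.04,0.04]$, however small $k_1>0$ is; so no $\delta$ works, and the graph of this field is indeed not closed. Hence any correct proof must use more than the axioms quoted in the paper, and your hoped-for deduction of closedness ``directly from the uniform constants'' is a dead end.

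What rescues the statement is the additional regularity of the particular fields constructed in \cite{Aa} (continuity of the field, cf.\ the properties listed in Theorem \ref{1.02}): once the graph of $x\mapsto H(x)$ is closed, your limit argument goes through, and it even simplifies, since from $w,z\in H(p)$ and $\phi_s(w)=z=\phi_0(z)$ the time-$\tau$ injectivity at $p$ gives $s=0$ at once, with no need for the projection maps. But invoking that property amounts to citing \cite{Aa}, which is precisely what the paper does; so of your two fallbacks only the first is viable, and the proof as written is incomplete without it. A minor additional point: compactness of $X$, which your argument uses to extract convergent subsequences, is also not among the hypotheses of this section (the space is only assumed to be metric here, Peano continua appear later), so it too would have to be assumed or imported explicitly.
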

\begin{proof}
Its proof can be found in \cite{Aa}*{Lemma 2.19}.
\end{proof}

\begin{definition}[Symmetric and monotone fields of sections]
If $H$ is a field of transversal sections, we denote by $H^T$ the \textit{transpose} of $H$ defined by $$H^{T}(x)=\{y\in X: x\in H(y)\}.$$
We say that the field $H$ is a \textit{symmetric} field if $H^T(x)=H(x)$ for every $x\in X$.
A field of transversal sections $H: X\rightarrow \mathcal{K}(X)$ is \textit{monotone} if there exists an $\varepsilon>0$ such that for every $t\in (0,\varepsilon)$ we have $H(x)\cap H(\phi_t(x))= \emptyset$ for every $x\in X$.
\end{definition}

\begin{theorem}
    If $X$ is a metric space, then every regular flow $\phi_t$ on $X$ admits a symmetric and monotone field of transversal sections.
\end{theorem}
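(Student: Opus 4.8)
The plan is to reduce the statement to two claims: (A) the flow $\phi$ admits a \emph{symmetric} field of transversal sections of some time $\tau>0$; and (B) every symmetric field of transversal sections of time $\tau$ is automatically \emph{monotone}. Claim (B) is short and I would dispatch it first; for (A) I would correct an arbitrary field of transversal sections (which exists by the construction in \cite{Aa}) using a symmetric relation.

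\emph{Proof of (B).} Let $H$ be a symmetric field of transversal sections of time $\tau$, with constants $\tau',r>0$ from the definition, and set $\varepsilon=\min\{\tau,\tau'\}$. Fix $x\in X$ and $0<t\le\varepsilon$, and suppose some $z$ lies in $H(x)\cap H(\phi_t(x))$. Since $H=H^T$, from $z\in H(x)$ we get $x\in H(z)$ and from $z\in H(\phi_t(x))$ we get $\phi_t(x)\in H(z)$. As $|t|\le\tau'$ we have $\phi_t(x)\in\phi_{[-\tau',\tau']}(x)$, so applying the transversal-section property of $H(z)$ to the point $x\in H(z)$ gives $\phi_t(x)\in H(z)\cap\phi_{[-\tau',\tau']}(x)=\{x\}$, i.e.\ $\phi_t(x)=x$. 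But $x\in H(z)$ and $H$ has time $\tau$, so $s\mapsto\phi_s(x)$ is the restriction of the injective map $\phi\colon[-\tau,\tau]\times H(z)\to X$ to $[-\tau,\tau]\times\{x\}$, hence injective on $[-\tau,\tau]$; since $0<t\le\tau$ this forbids $\phi_t(x)=\phi_0(x)=x$. The contradiction shows $H(x)\cap H(\phi_t(x))=\emptyset$ for all $0<t\le\varepsilon$ and all $x$, which is monotonicity (so no separate control on small periods is required).

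\emph{Construction for (A).} Starting from a field of transversal sections $H_0$ of time $\tau_0$, with flow box field $F$ and projections $\pi_w\colon F(w)\to H_0(w)$ as in Definition \ref{1.04}, I would not use the naive $H_0\cap H_0^T$ (whose flow-saturate need not contain a uniform ball), but cut out a section by a symmetric relation. For $x,y$ close enough that $y\in F(x)$ and $x\in F(y)$, let $\sigma(x,y)\in[-\tau_0,\tau_0]$ be the unique time with $\phi_{\sigma(x,y)}(y)=\pi_x(y)\in H_0(x)$, and set $H(x)$ to be the closure of $\{\,y\in B_\rho(x):\sigma(x,y)=\sigma(y,x)\,\}$ for a small uniform $\rho>0$. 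The condition $\sigma(x,y)=\sigma(y,x)$ is symmetric, so $H=H^T$, and $\sigma(x,x)=0$ gives $x\in H(x)$. That $H$ is again a field of transversal sections with constants independent of $x$ comes down to a transversality computation: since $\sigma(x,\phi_s(z))=\sigma(x,z)-s$ along flow-box pieces, $\phi_s(z)$ satisfies the relation iff $\sigma(x,z)-s=\sigma(\phi_s(z),x)$, and $\sigma(\phi_s(z),x)=s+\sigma(z,x)+(\text{error})$ where Proposition \ref{1.06} bounds the error's slope in $s$ away from $1$; hence $s\mapsto\sigma(x,z)-s-\sigma(\phi_s(z),x)$ is strictly decreasing on a uniform interval and, for $\rho$ small, changes sign there, its unique zero being the crossing time, and uniqueness of the crossing together with a covering by the balls $B_\rho(x)$ yields the axioms. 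The main obstacle is precisely this uniform transversality: a merely continuous flow on a metric space offers no differentiability, so the length of the interval and the radius $\rho$ must be made independent of $x$ purely from the flow-box estimates of \cite{Aa} (Proposition \ref{1.06} and the uniform constants $\tau_0,\tau',r$ attached to $H_0$). If one prefers, one may instead invoke the symmetric version of this construction already present in \cite{Aa}. Everything else — symmetry, compactness of the values $H(x)$, and the deduction of monotonicity in (B) — is then routine.
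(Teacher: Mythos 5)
Your reduction (B) is correct and is a genuinely nice observation: with this paper's definition of monotonicity, a symmetric field of transversal sections of time $\tau$ is automatically monotone with $\varepsilon=\min\{\tau,\tau'\}$, by exactly the argument you give (symmetry puts both $x$ and $\phi_t(x)$ in $H(z)$, the section property forces $\phi_t(x)=x$, and injectivity of $\phi$ on $[-\tau,\tau]\times H(z)$ forbids this). Be aware, though, that the paper does not prove this theorem at all: its proof is the citation \cite{Aa}*{Theorem 2.51}, so your closing fallback (``invoke the symmetric version already present in \cite{Aa}'') is precisely the paper's entire argument, and anything you add beyond that has to stand on its own.

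It does not, and the gap is exactly the point you yourself flag as ``the main obstacle''. Your crossing-time argument needs $s\mapsto \sigma(x,z)-s-\sigma(\phi_s(z),x)$ to be strictly decreasing on a uniform interval, i.e. it needs the error in $\sigma(\phi_s(z),x)=s+\sigma(z,x)+(\text{error})$ to vary in $s$ strictly slower than linearly. Proposition~\ref{1.06} provides nothing of the sort: it is a containment statement ($H_r(y)\subset\phi_{[-\eta,\eta]}(H(x))$ when $d(x,y)\le\delta$), hence only a uniform bound $|\text{error}|\le\eta$, with no control on its modulus of variation in $s$. Since the auxiliary field $H_0$ is not assumed to vary continuously with the base point (continuity of a field is only obtained later, on Peano continua, Theorem~\ref{1.02}), the map $s\mapsto\sigma(\phi_s(z),x)$ may oscillate, and neither existence of a crossing on a uniform interval nor its uniqueness follows; so the section axioms for your $H$ are not established. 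Two further steps are not ``routine'': symmetry survives the closure only if the relation $\sigma(x,y)=\sigma(y,x)$ is closed, which again requires a joint continuity of $\sigma$ you do not have for a general $H_0$; and compactness of the values $H(x)$ fails in a general metric space, where closed balls need not be compact --- the construction you are trying to repair (and the lower bound on periods that makes ``time $\tau$'' available for your step (B)) really lives in the compact setting of \cite{Aa}. So as a self-contained proof the proposal is incomplete; as a proof it is sound only insofar as it defers to \cite{Aa}*{Theorem 2.51}, which is what the paper does.
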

\begin{proof}
Its proof can be found in \cite{Aa}*{Theorem 2.51}.
\end{proof}
From now on we will fix a field of transversal sections $H:X\rightarrow \mathcal{K}(X) $, constants $\tau>0$ and $r>0$ such that $\phi:[-\tau, \tau] \times H(x) \rightarrow X$ is injective and $B(x,r)\subset \phi_{[-\tau, \tau]}(H(x))$ for every $x\in X$. For each $x\in X$ and $\varepsilon>0$, let $S_\varepsilon(x)=\pi_x(W_\varepsilon^{ss}(x))$ and $U_\varepsilon(x)=\pi_x(W_\varepsilon^{uu}(x))$, where the projection $\pi_{x}$ is described in Definition $\ref{1.04}$ (this notation follows the notation in \cite{TR}).
The following proposition states that if the orbits of two points $x,y \in X$ follow each other at all times (in the kinematic or geometric sense), then $\pi_x(y)$ and $x$ follow each other transversely at each time with some reparametrization.

\begin{proposition}\label{3.02}
	For each $\varepsilon>0$, there exists $r_1>0$ such that 
 $$S_{r_1}(x)\subseteq T_{\varepsilon}^{s}(x) \,\,\,\,\,\, \text{and} \,\,\,\,\,\, U_{r_1}(x)\subseteq T_{\varepsilon}^{u}(x).$$
\end{proposition}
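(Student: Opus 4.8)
The plan is to show that the canonical projection $\pi_x$ maps geometrically $r_1$-stable sets into sectionally $\varepsilon$-stable sets, for a suitably small $r_1$ depending on $\varepsilon$. So take $y\in S_{r_1}(x)$, which means $y=\pi_x(z)$ for some $z\in W^{ss}_{r_1}(x)$; by definition there is $\alpha\in Rep(\R)^+$ with $d(\phi_t(x),\phi_{\alpha(t)}(z))\le r_1$ for all $t\ge 0$. Since $y=\pi_x(z)$, we have $y=\phi_{t_0}(z)$ for some $|t_0|\le\tau$, so by composing with the fixed time-shift and adjusting, $y$ has an orbit that stays uniformly close to the orbit of $x$ (at kinematic distance at most $r_1 + \omega(\tau)$, where $\omega$ is a modulus of uniform continuity for $\phi$ on the relevant compact range of times; more carefully, $\phi_{\alpha(t)}(y)=\phi_{\alpha(t)+t_0}(z)$ and one reparametrizes). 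The goal is then to produce, for each $t\ge 0$, a reparametrized time $\beta(t)$ with $\phi_{\beta(t)}(y)\in H_\varepsilon(\phi_t(x))$, i.e.\ the orbit of $y$ can be pushed into the $\varepsilon$-section at $\phi_t(x)$.

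The key tool is Proposition~\ref{1.06}: if $d(\phi_t(x),\phi_{\alpha(t)}(z))\le\delta$ is small enough, then $H_r(\phi_{\alpha(t)}(z))\subset\phi_{[-\eta,\eta]}(H(\phi_t(x)))$, which says the local section at a nearby point sits inside a thin flow box around $H(\phi_t(x))$. In particular the point $\phi_{\alpha(t)}(z)$ itself — being in $H_r(\phi_{\alpha(t)}(z))$ trivially once $r$ is chosen so this holds, or more simply being within $r_1$ of $\phi_t(x)$ — can be flowed by a small amount $s(t)$ with $|s(t)|\le\eta\le\tau$ into $H(\phi_t(x))$. Define $\beta(t)$ to be $\alpha(t)$ plus this correction time, applied to $y$ after accounting for the initial shift $t_0$. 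Two things then need checking: first, that $\phi_{\beta(t)}(y)$ lands not merely in $H(\phi_t(x))$ but in $H_\varepsilon(\phi_t(x))=H(\phi_t(x))\cap B_\varepsilon(\phi_t(x))$ — this follows because $\phi_{\beta(t)}(y)$ is within $r_1$ plus a flow-time of at most $\tau$ of $\phi_t(x)$, so choosing $r_1$ small (using uniform continuity of $\phi$ on $[-\tau,\tau]\times X$, or at least on a compact set if $X$ is compact) makes this distance less than $\varepsilon$; second, that $\beta$ can be taken in $Rep(\R)^+$, i.e.\ it is an increasing homeomorphism of $[0,\infty)$ with $\beta(0)$ normalized to $0$.

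The monotonicity/injectivity of the flow box (the fixed $\tau$ with $\phi:[-\tau,\tau]\times H(x)\to X$ injective) is what guarantees the correction time $s(t)$ is \emph{unique}, hence well-defined, and one expects it to vary continuously in $t$ — this is essentially the continuity of the projection $\pi_{\phi_t(x)}$ restricted to the flow box, composed with the continuous curve $t\mapsto\phi_{\alpha(t)}(z)$. Continuity of $\alpha$ (it lies in $Rep(\R)^+$) and of the flow then give continuity of $\beta$; strict monotonicity of $\beta$ follows since $\alpha$ is strictly increasing and the correction stays bounded in $[-\tau,\tau]$, provided one first passes to the reparametrization along the orbit of $y$ rather than $z$ (the standard trick: reparametrize $t\mapsto\phi_{\alpha(t)}(z)$ so that consecutive section-hits are ordered, which works because the orbit of $z$ and of $y$ coincide as sets). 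Finally normalize at $t=0$: replace $\beta$ by $\beta-\beta(0)$ after absorbing $\beta(0)$ into the choice of representative of $y$ on its orbit (note $T^s_\varepsilon$ is essentially orbit-wise, and $\pi_x(z)$ is the canonical representative, so $\beta(0)=0$ automatically once $y=\pi_x(z)\in H(x)$).

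\textbf{The main obstacle} I anticipate is verifying that $\beta\in Rep(\R)^+$ — specifically that the time-correction $s(t)$ can be chosen to depend on $t$ in a way that keeps $t\mapsto\beta(t)$ strictly increasing and surjective onto $[0,\infty)$, rather than merely producing, for each individual $t$, \emph{some} time at which $y$'s orbit meets the section. The clean way around this is to define $\beta(t)$ as the unique time with $\phi_{\beta(t)}(z)\in H(\phi_t(x))$ (uniqueness from flow-box injectivity on the scale $\tau$, once $r_1$ is small enough that the whole orbit of $z$ stays within the flow boxes $F(\phi_t(x))$), observe that $\beta(t)-\alpha(t)$ stays in $[-\tau,\tau]$, deduce $\beta$ is a continuous injection $[0,\infty)\to[0,\infty)$ (injectivity: if $\beta(t_1)=\beta(t_2)$ then $\phi_{\beta(t_1)}(z)$ lies in both $H(\phi_{t_1}(x))$ and $H(\phi_{t_2}(x))$, which for $t_1\ne t_2$ close contradicts monotonicity of the section field, and for $t_1,t_2$ far apart contradicts that $\beta$ tracks the strictly increasing $\alpha$ up to bounded error over long times), hence a homeomorphism onto its image which is all of $[0,\infty)$, and finally translate from $z$ to $y=\pi_x(z)$. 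Handling the case where $X$ is not assumed compact (the proposition as stated does not require it) requires care with uniform continuity, but the box-field constants $\tau, r$ are uniform in $x$ by hypothesis, which is exactly what is needed.
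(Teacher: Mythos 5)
Your proposal follows essentially the same route as the paper's proof: write $y=\pi_x(z)$ with $z\in W^{ss}_{r_1}(x)$, keep the geometric reparametrization $\alpha$, at each $t\ge 0$ add a small time correction (available because $r_1$ is chosen with $B_{r_1}(\cdot)\subset\phi_{[-\lambda,\lambda]}(H(\cdot))$ for a small $\lambda$) to land in $H(\phi_t(x))$, bound the distance by $r_1+\varepsilon/2$ via uniform continuity, and then verify $\beta\in Rep(\R)^+$ using Proposition~\ref{1.06} for continuity of the correction, the projection identity for $\beta(0)=0$, and the monotone field of sections for strict monotonicity. The only caution is that the correction time must be kept small (of order $\lambda$, not merely bounded by $\tau$) for the $H_\varepsilon$-containment, and that monotonicity of $\beta$ should rest on the monotone section field together with continuity (as in your fallback argument and in the paper) rather than on the mere boundedness of the correction, since a bounded oscillating correction could otherwise destroy monotonicity.
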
	
\begin{proof}
Let $\varepsilon\in(0,r)$ and choose $\lambda\in(0,\tau)$ such that 
$$d(\phi_\mu(x),x)\leq \frac{\varepsilon}{2} \,\,\,\,\,\, \text{whenever} \,\,\,\,\,\, |\mu|\leq \lambda \,\,\,\,\,\, \text{and} \,\,\,\,\,\, x\in X.$$ Now choose $r_1\in(0,\frac{\varepsilon}{2})$ (corresponding to $\lambda$) such that 
$$B_{r_1}(x)\subset \phi_{[-\lambda,\lambda]}(H(x)) \,\,\,\,\,\, \text{for every} \,\,\,\,\,\, x\in X.$$ Thus, if $y\in S_{r_1}(x)$, then $\phi_{s}(y)\in W_{r_1}^{ss}(x)$ for some $s$ with $|s|\leq\lambda$.
To prove that $y\in T_{\varepsilon}^{s}(x)$ we need to prove the existence of $\beta\in Rep(\R)^+$ such that 
$$\phi_{\beta(t)}(y)\in H_{\varepsilon}(\phi_t(x)) \,\,\,\,\,\, \text{for every} \,\,\,\,\,\, t\geq 0.$$ Since $\phi_s(y)\in W_{r_1}^{ss}(x)$, there exists $\alpha\in Rep(\R)^+$ such that 
$$d(\phi_{\alpha(t)+s}(y),\phi_t(x))\leq r_1 \,\,\,\,\,\, \text{for every} \,\,\,\,\,\, t\geq 0.$$ The choice of $r_1$ ensures, for each $t\geq0$, the existence of $\mu_t\in\R$ such that 
$$|\mu_t|\leq \lambda \,\,\,\,\,\, \text{and} \,\,\,\,\,\, \phi_{\mu_t}(\phi_{\alpha(t)+s}(y))\in H(\phi_t(x)).$$
Therefore, the following holds for every $t\geq0$:
\begin{align*}
	d(\phi_t(x),\phi_{\mu_t+\alpha(t)+s}(y))&\leq d(\phi_t(x),\phi_{\alpha(t)+s}(y))+d(\phi_{\alpha(t)+s}(y),\phi_{\mu_t+\alpha(t)+s}(y))\\
	&\leq r_1+\varepsilon/2\\
	&\leq \varepsilon.
\end{align*}
Defining $\beta\colon[0,+\infty)\to\R$ by $\beta(t)=\alpha(t)+\mu_t+s$, we obtain:
$$\phi_{\beta(t)}(y)\in H_{\varepsilon}(\phi_t(x)) \,\,\,\,\,\, \text{for every} \,\,\,\,\,\, t\geq 0.$$ To conclude that $y\in T_\varepsilon^s(x)$ we only have to prove that $\beta\in Rep(\R)^+$.

\vspace{+0.4cm}


\vspace{+0.4cm}
\hspace{-0.45cm}\textit{Continuity}: It is enough to show the continuity of $\mu_t$. Indeed, for each $\eta\in(0,\tau]$, according to Proposition \ref{1.06}, there exists a $\delta>0$ such that
		$$d(x,y)\leq \delta \quad\rightarrow \quad H_{\eps}(y)\subset \phi_{[-\eta/2,\eta/2]}(H(x)).$$
	Choose $\lambda_1>0$ such that
	\begin{itemize}
		\item $d(x,\phi_t(x))\leq \delta $ whenever $|t|\leq \lambda_1$, and
		\item $|t_1-t_2|\leq \lambda_1 \quad \rightarrow \quad |\alpha(t_1)-\alpha(t_2)|\leq \eta/2$.
	\end{itemize}
	Thus, if $|t_1-t_2|\leq \lambda_1$, then $d(\phi_{t_1}(x),\phi_{t_2}(x))\leq \delta$ and
	\begin{equation}\label{3.03}
		H_{\eps}(\phi_{t_1}(x))\subset \phi_{[-\eta/2,\eta/2]}(H(\phi_{t_2}(x))).
	\end{equation}
	Since $\phi_{\alpha(t_1)+\mu_{t_1}+s}(y)\in H_{\varepsilon}(\phi_{t_1}(x))$ and $\phi_{\alpha(t_2)+\mu_{t_2}+s}(y)\in H_{\varepsilon}(\phi_{t_2}(x))$, the inclusion in (\ref{3.03}) 
	ensures that
 $$|\alpha(t_1)+\mu_{t_1}- \alpha(t_2)-\mu_{t_2}|\leq \eta/2$$ from where we conclude that $|\mu_{t_1}-\mu_{t_2}|\leq \eta$, proving the continuity of $\mu_t$.\\
 
 \hspace{-0.45cm}\textit{Fixed at point $0$}:
 Just observe that $\pi_x(\phi_s(y))=\phi_{\mu_0+s}(y)=y$, which means $\mu_0=-s$. Therefore, $\beta(0)=0$. \\
 
\noindent
\textit{Increasing}. Due to the monotonicity of the transversal sections, it is immediate that
 $$t_1\leq t_2\quad \rightarrow\quad \mu_{t_1}+\alpha(t_1)<\mu_{t_2}+\alpha(t_2),$$
 hence $\beta(t_1)<\beta(t_2)$. Therefore, $\phi_{\beta(t)}(y)\in H_{\varepsilon}(\phi_t(x))$ for all $t\geq 0$, which means $y\in T_\varepsilon^s(x)$.
\end{proof}

\section{Uniform expansiveness and uniform cw-expansiveness}
In this section, we will show that expansiveness and cw-expansiveness imply uniform expansiveness and uniform cw-expansiveness in the transversal sections, respectively. Expansiveness is a crucial property in various areas, including chaos theory, since expansive systems exhibit chaotic behavior. In \cite{BW}, Bowen and Walters introduced a definition of expansive flow. This definition requires that distinct trajectories are separated, even if one of the orbits is reparametrized. A natural idea that appears is to consider possible extensions of this concept. In this direction, cw-expansiveness represents a generalization of expansiveness introduced by Kato \cite{Kato93} in the case of homeomorphisms. However, adapting cw-expansiveness to the context of flows is a subtle task, especially due to the flow direction and the possibility of a range of reparametrizations. This was done in W. Cordeiro's doctoral thesis and is exposed in \cite{Cwe1}.
The following is the definition of expansive flow according to Bowen and Walters. 

\begin{definition}[Expansiveness] 
A flow $(X,\phi)$ is \textit{expansive} if for every $\varepsilon>0$ there exists $\delta>0$, called an expansiveness constant of $\phi$, such that if
$$d(\phi_{\alpha(t)}(y),\phi_t(x))\leq \delta \,\,\,\,\,\, \text{for every} \,\,\,\,\,\, t\in\R$$
and some  $\alpha\in Rep(\R)$, then $y\in \phi_{(-\varepsilon,\varepsilon)}(x)$.
\end{definition}
According to Bowen and Walters (see \cite{BW}*{Lemma 1}), for expansive flows, there exists only a finite number of fixed points, and each one is an isolated point of X. This reduces our study of expansive flows to those without fixed points. To define cw-expansiveness according to W. Cordeiro we need to state first the following definitions and notations. For any subset $A$ of a metric space, let
$$C^0(A, \R)=\{h:A\rightarrow \mathbb{R}: h \, \, \text{is continuous} \}.$$ In particular, we let $C(\R)=C^0(\R, \R)$. For $A\subset X$, let
\begin{eqnarray*}
\h(A)=\{\alpha: A \rightarrow C(\R): \ \exists x_\alpha \in A \ \mbox{with} \ \alpha(x_\alpha)=id_\R \ \mbox{and}\\ 
\alpha(.)(t) \in C^0(A, \R) \,\,\, \mbox{for every} \ t \in \R\}
\end{eqnarray*}
A map $\alpha\in\h(A)$ associate to each $x\in A$ a map $\alpha(x)\in C(\R)$ where one of the associated maps is the identity and this association varies continuously with the point of $A$. The idea is that each point $x\in A$ will be reparametrized by a distinct reparametrization $\alpha(x)$. Precisely, for each $\alpha \in \h(A)$ and $t \in \R$, we define
$$\mathcal{X}^t_{\alpha}(A)=\{\phi_{\alpha(x)(t)}(x): x\in A\}.$$ Let $\mathcal{K}(X)$ be the set of all compact subsets of $X$ equipped with the Hausdorff metric and $\mathcal{C}(X)=\{A\in \mathcal{K}(X); A\, \, \mbox{is} \,\, \text{connected}\}$. In the case $A\in\mathcal{C}(X)$, a map in $\h(A)$ represents a continuum of reparametrizations.

\begin{definition}[Continuum-wise expansiveness] 
A flow $(X, \phi)$ is \textit{continuum-wise expansive}, or simply cw-expansive, if for every $\varepsilon > 0$, there exists $\delta > 0$, called a cw-expansiveness constant of $\phi$, such that if $A \in \mathcal{C}(X)$, $\alpha \in \h(A)$, and 
$$\d (\mathcal{X}^{t}_\alpha(A)) < \delta\quad  \text{for every}\quad t \in \mathbb{R},$$ then $A \subset \phi_{(-\varepsilon,\varepsilon)}(x_\alpha)$.
\end{definition}
According to \cite{Cwe1}*{Lemma 2.1}, for cw-expansive flows, there exists only a finite number of fixed points. As in the expansive case, this reduces our study of cw-expansive flows to those without fixed points. Thus, we can choose, as described in the previous section, a field of transversal sections $H\colon X\rightarrow \mathcal{K}(X) $ and constants $\tau>0$ and $r>0$ such that $\phi:[-\tau , \tau] \times H(x) \rightarrow X$  is injective and $B(x,r)\subset \phi_{[-\tau, \tau]}(H(x))$ for each $x\in X$. Additionally, let $\varepsilon_0$ be an expansiveness (cw-expansiveness) constant associated to $\tau$.

\begin{definition}[Sectional Flow] Let $I\subseteq \mathbb{R}$ be an interval. If there exists an $\alpha \in Rep(\mathbb{R})$ such that $\phi_{\alpha(t)}(y)\in H(\phi_t(x_\alpha))$ for every $t\in I$, then we define the \textit{sectional flow}
$$\Phi_{\alpha}^t(x,y)=(\phi_t(x), \phi_{\alpha(t)}(y)) \,\,\,\,\,\, \text{for every} \,\,\,\,\,\, t\in I.$$
Thus, every time we use the notation of the sectional flow $\Phi_{\alpha}^t(x,y)$, the hypothesis that $\phi_{\alpha(t)}(y)$ lies in the section $H(\phi_t(x))$ is implicit.
\end{definition}

The following theorem will be of great importance for the proof of uniform expansiveness and uniform cw-expansiveness in the transversal sections.

\begin{theorem}\label{1.01}
If $(h_n)_{n\in\N}$ is a sequence of continuous functions from $I$ to $\mathbb{R}$ with $h_{n}(0)=0$ for every $n\in\N$, $x_{n} \rightarrow x$, $y_{n} \rightarrow y$ when $n\to\infty$, and 
$$\phi_{h_{n}(t)}\left(y_{n}\right) \in H\left(\phi_{t}\left(x_{n}\right)\right) \,\,\,\,\,\, \text{for every} \,\,\,\,\,\, n \geq 1 \,\,\,\,\,\, \text{and every} \,\,\,\,\,\, t \in I,$$ then $h_{n}$ converges uniformly to an $h: I \rightarrow \mathbb{R}$ satisfying 
$$\phi_{h(t)}(y) \in H\left(\phi_{t}(x)\right) \,\,\,\,\,\, \text{for every} \,\,\,\,\,\, t \in I.$$
\end{theorem}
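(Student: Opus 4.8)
The plan is to prove the statement by an Arzel\`a--Ascoli argument on compact subintervals together with a rigidity (uniqueness) statement coming from transversality. The tools I would use are Proposition~\ref{1.06}, the injectivity of the time-$\tau$ field $\phi\colon[-\tau,\tau]\times H(x)\to X$, and the semicontinuity and compactness properties of the field $H$ from \cite{Aa}. A preliminary remark: such a field has no periodic orbits of period $\le\tau$ (otherwise $\phi_0$ and $\phi_p$ would agree at a point of a section $H(c)$ with $p\in(0,\tau]$, contradicting injectivity of $\phi\colon[-\tau,\tau]\times H(c)\to X$). \emph{Uniqueness}: there is at most one continuous $g\colon I\to\mathbb{R}$ with $g(0)=0$ and $\phi_{g(t)}(y)\in H(\phi_t(x))$ for all $t$; indeed, given two such $g,\tilde g$, the set $E=\{t\in I:g(t)=\tilde g(t)\}$ is closed, nonempty and open, because near a point of $E$ one has $|g(t)-\tilde g(t)|\le\tau$, so $\phi_{g(t)}(y)=\phi_{g(t)-\tilde g(t)}(\phi_{\tilde g(t)}(y))$ lies in $H(\phi_t(x))\cap\phi_{[-\tau,\tau]}(\phi_{\tilde g(t)}(y))=\{\phi_{\tilde g(t)}(y)\}$, forcing $\phi_{g(t)}(y)=\phi_{\tilde g(t)}(y)$ and hence (no short periodic orbits) $g(t)=\tilde g(t)$; since $I$ is connected, $E=I$.

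Next I would establish that $(h_n)$ is uniformly bounded and equicontinuous on each compact subinterval $[a,b]\ni 0$ of $I$. The key estimate is that there exists $\rho>0$ with $|s-t|\le\rho\Rightarrow|h_n(s)-h_n(t)|\le\tau$ for every $n$. If this failed, the intermediate value theorem would produce $n_k\to\infty$ and $s_k,t_k\in[a,b]$ with $|s_k-t_k|\to 0$ and $|h_{n_k}(s_k)-h_{n_k}(t_k)|=\tau$; since the points $\phi_{h_{n_k}(s_k)}(y_{n_k})$ lie in sections $H(\phi_{s_k}(x_{n_k}))$ whose base points stay in a compact set, they lie (by the properties of $H$) in a compact set, so after passing to subsequences $\phi_{h_{n_k}(s_k)}(y_{n_k})\to z_\ast$, $\phi_{h_{n_k}(t_k)}(y_{n_k})\to\phi_{d_\ast}(z_\ast)$ with $|d_\ast|=\tau$, and $\phi_{s_k}(x_{n_k}),\phi_{t_k}(x_{n_k})\to c_\ast$; the closed graph of $H$ then gives $z_\ast,\phi_{d_\ast}(z_\ast)\in H(c_\ast)$, contradicting $H(c_\ast)\cap\phi_{[-\tau,\tau]}(z_\ast)=\{z_\ast\}$ and the absence of periodic orbits of period $\le\tau$. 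Chaining this estimate over $[a,b]$ gives $|h_n(t)|\le\lceil|t|/\rho\rceil\tau$, i.e. uniform boundedness; and repeating the limiting argument, now with the bound available to extract a limit of $h_{n_k}(s_k)$, upgrades it to equicontinuity of $(h_n)$ on $[a,b]$.

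With this in hand, Arzel\`a--Ascoli and a diagonal argument over an exhaustion of $I$ by compact subintervals give a subsequence of $(h_n)$ converging uniformly on compact subintervals to a continuous $h$ with $h(0)=0$; continuity of $\phi$ together with the closed graph of $H$ yields $\phi_{h(t)}(y)\in H(\phi_t(x))$ for all $t$. By the uniqueness step, $h$ is the unique function with these properties, so every uniformly (on compacts) convergent subsequence of $(h_n)$ has limit $h$; the usual subsequence argument then shows $h_n\to h$ uniformly on compact subintervals of $I$, hence uniformly when $I$ is compact. I expect the one genuinely delicate point to be the key estimate in the second step --- producing compact limits of the reparametrized orbits $\phi_{h_n(t)}(y_n)$ before any bound on $h_n$ is available --- which is where the compactness and semicontinuity of the section field and the injectivity of the time-$\tau$ field are used essentially; everything else (Arzel\`a--Ascoli, connectedness of $I$, the transversality relations) is routine.
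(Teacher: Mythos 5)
The paper does not actually prove Theorem \ref{1.01}; it cites Lemma 3.2 in \cite{Aa}, so there is no internal proof to compare against and your argument has to stand on its own. Its skeleton is sound and close to what one expects: the uniqueness step (the set $\{t: g(t)=\tilde g(t)\}$ is open and closed, using the time-$\tau$ injectivity of $\phi$ on $[-\tau,\tau]\times H(\phi_t(x))$, which indeed gives both $H(z)\cap\phi_{[-\tau,\tau]}(w)=\{w\}$ and the absence of periods in $(0,\tau]$) is correctly derived from the axioms stated in Section 2, the uniform modulus $|s-t|\le\rho\Rightarrow|h_n(s)-h_n(t)|\le\tau$ does give boundedness and then equicontinuity, and the Arzel\`a--Ascoli-plus-uniqueness upgrade from subsequential limits to convergence of the whole sequence is standard. (Note that your conclusion is uniform convergence on compact subintervals; this is how the lemma is used in the paper, e.g.\ in Proposition \ref{3.08a}, so the discrepancy with the literal statement is harmless but worth flagging.)

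The one genuine gap is your repeated appeal to ``the closed graph of $H$'' (and, in the first limit extraction, to compactness properties of the field before any bound on $h_{n_k}$ is available). A field of transversal sections, as defined here and as fixed before Proposition \ref{3.02}, only satisfies the covering and injectivity conditions with uniform constants; neither symmetry nor monotonicity is stated to force the relation $\{(c,z):z\in H(c)\}$ to be closed, and the only semicontinuity result quoted in the paper, Proposition \ref{1.06}, controls truncated sections only: it yields $z_k\in H(c_k),\ z_k\to z,\ c_k\to c\Rightarrow z\in H(c)$ (via $z\in\bigcap_{\eta>0}\phi_{[-\eta,\eta]}(H(c))=H(c)$) only when $d(z_k,c_k)\le r'<\gamma$. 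In your equicontinuity step and in the final passage to $\phi_{h(t)}(y)\in H(\phi_t(x))$, the points $\phi_{h_{n_k}(s_k)}(y_{n_k})$ are only known to lie in the full, possibly large, sections $H(\phi_{s_k}(x_{n_k}))$, so this local version does not apply as invoked, and precompactness of $\bigcup_k H(\phi_{s_k}(x_{n_k}))$ is likewise not a stated property (it is immediate only if $X$ is compact, which is the de facto standing assumption and should be said explicitly). To close the gap you must either work with sections of uniformly small diameter (equivalently with $H_{r'}$), in which case Proposition \ref{1.06} supplies exactly the needed local closedness; or invoke the continuity of the field available on Peano continua from Theorem \ref{1.02}; or quote precisely the regularity that \cite{Aa} builds into its fields when proving its Lemma 3.2. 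As written, the two transversality contradictions and the limit membership rest on an unproved property of $H$.
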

\begin{proof}
Its proof can be found in \cite{Aa}*{Lemma 3.2}.
\end{proof}

This theorem shows one of the important features that appear with a field of transversal sections, and particularly the sectional flow. In \cite{Oft}, uniform convergence of a sequence of reparametrizations is proved without the hypothesis that respective iterates are in the same section, but it is conditioned to the reparametrizations being in the set $$ Rep(\varepsilon)=\left\{g\in Rep^*(\R): \left|\frac{g(s)-g(t)}{s-t}-1\right|\leq \varepsilon\right\}$$ for some $\eps>0$, where $Rep^*(\R)=\{g\in Rep(\R):  g(\R)=\R  \}$. Theorem \ref{1.01} asks only that $(h_n)_{n\in\N}$ is a sequence of continuous functions fixing $0$. The transversal uniform expansiveness appeared in \cite{Keynes}*{Corollary 2.11}, where a finite number of transversal sections whose union of flow boxes cover the whole space is used (see \cite{Keynes}*{Lemma 2.4}). Working with a finite number of transversal sections makes the understanding of various concepts more difficult and the proofs of results more technical and complicate, especially if we aim to extend these properties to the cw-expansive case. This motivated us to redefine several concepts using a field of transversal sections. In the previous section, we have done this to the local stable/unstable sets and now we define transversal uniform expansiveness, which is equivalent to the uniform expansiveness defined in \cite{Keynes}.

\begin{definition}[Uniform expansiveness on transversal sections]
Let $(X,\phi)$ be an expansive flow and $\eps_0>0$ be an expansiveness constant associated to $\tau$. We say that a flow $(X,\phi)$ is \textit{uniformly transversely expansive} if for every $\varepsilon>0$, there exists $t_\varepsilon>0$ such that for every $y\in H(x)$ there exists $\alpha\in Rep(\mathbb{R})$ such that
    	$$d(x,y)\geq  \varepsilon \ \ \Rightarrow \sup_{|t|\leq t_\varepsilon}d\left(\Phi_{\alpha}^{t}(x,y)\right)> \varepsilon_0.$$ 
\end{definition}

\begin{proposition}\label{3.08a}
Every expansive flow is uniformly transversely expansive.
\end{proposition}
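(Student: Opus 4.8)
The plan is to argue by contradiction, combining the compactness of $X$ with the uniform convergence of sectional reparametrizations (Theorem \ref{1.01}) to manufacture, out of a hypothetical failure, a pair of points that violates expansiveness. Suppose the conclusion fails for some $\varepsilon>0$. Then for each $n\in\N$, taking $t_\varepsilon=n$, there are $x_n\in X$ and $y_n\in H(x_n)$ with $d(x_n,y_n)\geq\varepsilon$ such that no admissible $\alpha\in Rep(\R)$ satisfies $\sup_{|t|\leq n}d(\Phi^t_\alpha(x_n,y_n))>\varepsilon_0$. We may assume $\varepsilon_0<r$. Starting from $\alpha_n(0)=0$ (legitimate since $y_n\in H(x_n)$), I would continue $\alpha_n$ along $[0,n]$ and along $[-n,0]$ so as to keep $\phi_{\alpha_n(t)}(y_n)\in H(\phi_t(x_n))$: while $d(\phi_t(x_n),\phi_{\alpha_n(t)}(y_n))<r$ the reparametrized orbit remains in $\phi_{[-\tau,\tau]}(H(\phi_t(x_n)))$, and Proposition \ref{1.06} together with the monotonicity of $H$ allow one to extend $\alpha_n$ continuously and strictly increasingly; and $d$ can never reach $r>\varepsilon_0$ on $[-n,n]$, since that would be exactly a forbidden separation. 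Extending $\alpha_n$ to an element of $Rep(\R)$ arbitrarily outside $[-n,n]$, we obtain $\alpha_n\in Rep(\R)$ with
\[
\phi_{\alpha_n(t)}(y_n)\in H(\phi_t(x_n))\quad\text{and}\quad d\bigl(\phi_t(x_n),\phi_{\alpha_n(t)}(y_n)\bigr)\leq\varepsilon_0\qquad\text{for all }|t|\leq n.
\]

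Next I would pass to the limit. By compactness of $X$ we may assume, after taking a subsequence, that $x_n\to x$ and $y_n\to y$; the closedness of the graph of $H$ (a consequence of Theorem \ref{1.01}) gives $y\in H(x)$, and $d(x,y)=\lim_n d(x_n,y_n)\geq\varepsilon$. Fix $T>0$. For $n\geq T$ the restrictions $\alpha_n|_{[-T,T]}$ are continuous, vanish at $0$ and satisfy $\phi_{\alpha_n(t)}(y_n)\in H(\phi_t(x_n))$ on $[-T,T]$, so by Theorem \ref{1.01} they converge uniformly to some $h^T\colon[-T,T]\to\R$ with $\phi_{h^T(t)}(y)\in H(\phi_t(x))$ on $[-T,T]$, and passing to the limit in the displayed estimate yields $d(\phi_t(x),\phi_{h^T(t)}(y))\leq\varepsilon_0$ there. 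By uniqueness of uniform limits the $h^T$ glue to a continuous $h\colon\R\to\R$ with $h(0)=0$ such that $\phi_{h(t)}(y)\in H(\phi_t(x))$ and $d(\phi_t(x),\phi_{h(t)}(y))\leq\varepsilon_0$ for all $t\in\R$. Monotonicity of $H$ makes $h$ strictly increasing (as in the ``Increasing'' step of Proposition \ref{3.02}); it also makes $h$ onto: if $\sup_t h(t)=b<\infty$, pick $t_n\to+\infty$ with $\phi_{t_n}(x)\to p$, so $\phi_{h(t_n)}(y)\to\phi_b(y)$, and since $h(t_n+s)\to b$ for each $s>0$ also $\phi_{h(t_n+s)}(y)\to\phi_b(y)$; closedness of the graph of $H$ then gives $\phi_b(y)\in H(p)\cap H(\phi_s(p))$ for all small $s>0$, contradicting monotonicity (and similarly for $\inf_t h(t)$). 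Hence $h\in Rep(\R)$.

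Finally, $d(\phi_{h(t)}(y),\phi_t(x))\leq\varepsilon_0$ for all $t\in\R$ with $h\in Rep(\R)$, and $\varepsilon_0$ being an expansiveness constant associated with $\tau$, force $y\in\phi_{(-\tau,\tau)}(x)$, say $y=\phi_{t_0}(x)$ with $|t_0|<\tau$. Since $x\in H(x)$ and $y\in H(x)$ (recall $H(x)$ is a transversal section through $x$) and $\phi\colon[-\tau,\tau]\times H(x)\to X$ is injective, $\phi_{t_0}(x)=y=\phi_0(y)$ forces $t_0=0$ and $y=x$, contradicting $d(x,y)\geq\varepsilon>0$. This contradiction proves the proposition.

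The most delicate part I expect to be the reparametrization bookkeeping of the first two paragraphs: one must check that the sectional reparametrization $\alpha_n$ really does extend continuously and monotonically over the whole interval $[-n,n]$ (so the failure hypothesis is not vacuous) and that the limiting $h$ belongs to $Rep(\R)$ rather than merely being an increasing embedding of $\R$ into a bounded interval. Both points lean on the transversal-section estimate of Proposition \ref{1.06}, on the monotonicity built into $H$, and on the absence of fixed points; expansiveness itself is only invoked at the very end, exactly as in the classical compactness proof that expansiveness implies uniform expansiveness.
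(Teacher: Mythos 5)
The second and third paragraphs of your argument are essentially the paper's proof: compactness plus Theorem~\ref{1.01}, applied on the intervals $[-T,T]$, produces a limit reparametrization $h$ with $\phi_{h(t)}(y)\in H(\phi_t(x))$ and $d(\phi_t(x),\phi_{h(t)}(y))\leq\varepsilon_0$ for all $t\in\R$; expansiveness (with $\varepsilon_0$ the constant associated to $\tau$) then gives $y\in\phi_{(-\tau,\tau)}(x)$, and the transversality of $H(x)$ forces $y=x$, contradicting $d(x,y)\geq\varepsilon$. Your extra verification that $h$ is strictly increasing and onto (via monotonicity of the field and closedness of its graph), hence genuinely an element of $Rep(\R)$ as the expansiveness definition requires, is a point the paper passes over in silence, and it is a worthwhile addition.

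The genuine gap is in your first paragraph. The paper negates the definition so that the hypothetical counterexample comes already equipped with reparametrizations $\alpha_i$ having the sectional property on all of $[-t_i,t_i]$ and $\sup_{|t|\leq t_i}d(\phi_t(x_i),\phi_{\alpha_i(t)}(y_i))\leq\varepsilon_0$; this is also the reading under which Corollary~\ref{3.091} applies the proposition. You instead negate it as ``no admissible $\alpha$ has $\sup>\varepsilon_0$'' and must therefore manufacture an admissible $\alpha_n$ with $d\leq\varepsilon_0$ on $[-n,n]$. The continuation you describe works only while the distance stays below $r$ (that is what keeps $\phi_{\alpha_n(t)}(y_n)$ inside the flow box so that Proposition~\ref{1.06} applies), and your reason why the distance can never reach $r$ --- ``that would be exactly a forbidden separation'' --- is circular: a reparametrization defined, with the sectional property, only on a proper subinterval $[0,t^*]$ of $[-n,n]$ and exhibiting separation larger than $\varepsilon_0$ does not contradict your failure hypothesis, since that hypothesis constrains only reparametrizations carrying the sectional property on the whole of $[-n,n]$, and nothing guarantees that the partial one extends past the time at which the distance reaches $r$. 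Under your reading, the case in which no admissible $\alpha$ exists on $[-n,n]$ at all is simply not handled, and then there is nothing to pass to the limit in your second paragraph. The repair is to adopt the quantification the paper intends (every sectional reparametrization of a pair at distance at least $\varepsilon$ must separate beyond $\varepsilon_0$ within time $t_\varepsilon$), whose negation hands you the $\alpha_n$ directly; your first paragraph then becomes unnecessary and the remainder of your proof stands, coinciding with the paper's argument.
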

\begin{proof} Assume by contradiction that there exist $\varepsilon>0$, a positive sequence $t_i \rightarrow \infty$ as $i \rightarrow \infty$, a pair of sequences of points $(x_i)_{i\in\N },(y_i)_{i\in\N } \subset X$ with $d(x_i, y_i) \geq \varepsilon$, and reparametrizations $\alpha_i \in Rep(\mathbb{R})$ such that
$$\sup_{|t|\leq t_i}d\left(\Phi_{\alpha_i}^{t}(x_i,y_i)\right)=\sup_{|t|\leq t_i}d(\phi_t(x_i),\phi_{\alpha_i(t)}(y_i))\leq \varepsilon_0.$$ 
Choosing a subsequence, we have that $x_i \rightarrow x$, $y_i \rightarrow y$, $x \neq y$ and $y\in H_{\eps_0}(x)$. Theorem \ref{1.01} applied inductively to the given subsequences of $(\alpha_i)_{i\in\N}$ restricted to increasing intervals of the form $[-n,n]$ with $n\in\N$, ensures in the limit the existence of an increasing homeomorphism $\beta\in Rep(\R)$ with the property that for each $n\in\N$, there exists a sub-sequence $(\alpha_{i_k})_{k\in\N}$ of $(\alpha_i)_{i\in\N}$ such that
$$\alpha_{i_k}|_{[-n,n]}\underset{k\to+\infty}{\longrightarrow} \beta|_{[n,n]}.$$
Furthermore, for each $t \in \mathbb{R}$, there exists $n\in\N$ such that $t \in [-n, n]$ and, hence,
h

$$d(\phi_t(x),\phi_{\beta(t)}(y))= \lim\limits_{k\rightarrow\infty}d(\phi_t(x_{i_k}),\phi_{\alpha_{i_k}(t)}(y_{i_k}))\leq \varepsilon_0.$$ This and Theorem \ref{1.01} ensure that 
$$\phi_{\beta(t)}(y) \in H_{\varepsilon_0}(\phi_t (x)) \,\,\,\,\,\, \text{for every} \,\,\,\,\,\, t\in\R.$$
From expansiveness, it follows that $y\in \phi_{(-\tau,\tau)}(x)$, but since $y\in H_{\eps_0}(x)$ and $H_{\eps_0}(x)$ is a transversal section of time $\tau$, we have $y=x$ that is a contradiction. 
\end{proof}

In the following corollary, we prove the uniform contraction on the sectionally geometric local stable/unstable sets.

\begin{corollary}\label{3.091}
If $(X,\phi)$ is expansive, then for every $\varepsilon > 0$ and $x \in X$ we have $$\Phi_{\alpha}^t(T^{s}_{\varepsilon_0}(x))\subset B_\varepsilon(\phi_t(x)) \ \mbox{ for every } \  t \geq t_\varepsilon,$$ where $t_{\eps}$ is given as in the definition of uniform transversal expansiveness.
\end{corollary}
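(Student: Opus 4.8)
The plan is to argue by contradiction, closely paralleling the proof of Proposition \ref{3.08a}. Suppose the conclusion fails: there exist $\varepsilon>0$, a point $x\in X$, points $y_i\in T^s_{\varepsilon_0}(x)$, and times $t_i\geq t_\varepsilon$ with $d(\Phi^{t_i}_{\alpha_i}(x,y_i))>\varepsilon$, where $\alpha_i\in Rep(\R)^+$ is a reparametrization witnessing $y_i\in T^s_{\varepsilon_0}(x)$ (so $\phi_{\alpha_i(t)}(y_i)\in H_{\varepsilon_0}(\phi_t(x))$ for all $t\geq 0$). The key observation is that if we set $x'_i=\phi_{t_i}(x)$ and $y'_i=\phi_{\alpha_i(t_i)}(y_i)$, then $y'_i\in H_{\varepsilon_0}(x'_i)$, the pair $(x'_i,y'_i)$ has $d(x'_i,y'_i)>\varepsilon$, and the forward orbit of $y'_i$ (suitably reparametrized, using the tail of $\alpha_i$) stays $\varepsilon_0$-close in the transversal-section sense to the forward orbit of $x'_i$. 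That is, the translated pair is a candidate for violating uniform transversal expansiveness over a window of length at least $t_i\geq t_\varepsilon$ — but in the wrong time direction, since we only control the \emph{forward} behavior.

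To fix the direction issue, I would instead keep the window $[0,t_i]$ and reparametrize so that the \emph{right} endpoint becomes the basepoint: consider the pair $\big(\phi_{t_i}(x),\phi_{\alpha_i(t_i)}(y_i)\big)$ and the map $s\mapsto \alpha_i(t_i+ s)-\alpha_i(t_i)$ on $[-t_i,0]$, which together with $s\mapsto \phi_{t_i+s}(x)$ shows the pair stays in the same sections for all $s\in[-t_i,0]$, i.e. over a backward window of length $t_i\geq t_\varepsilon$. Passing to a subsequence, $\phi_{t_i}(x)\to a$ and $\phi_{\alpha_i(t_i)}(y_i)\to b$ with $b\in H_{\varepsilon_0}(a)$ and $d(a,b)\geq\varepsilon$. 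Applying Theorem \ref{1.01} inductively on the intervals $[-n,0]$ (as in Proposition \ref{3.08a}) produces a reparametrization $\gamma$ with $\phi_{\gamma(s)}(b)\in H_{\varepsilon_0}(\phi_s(a))$ for all $s\leq 0$, hence $d(\Phi^s_\gamma(a,b))\leq\varepsilon_0$ for all $s\leq 0$; in particular $\sup_{|s|\leq t_\varepsilon}d(\Phi^s_\gamma(a,b))\leq\varepsilon_0$. But $d(a,b)\geq\varepsilon$ and $b\in H(a)$, so the definition of uniform transversal expansiveness (applied with this $\varepsilon$, the point $a$, and some reparametrization $\gamma'$) gives $\sup_{|s|\leq t_\varepsilon}d(\Phi^s_{\gamma'}(a,b))>\varepsilon_0$. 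One must reconcile that the definition quantifies over \emph{some} $\gamma'$ while we have produced a specific $\gamma$: the cleanest route is to note that $b\in H_{\varepsilon_0}(a)$ already forces, via expansiveness and the fact that $H_{\varepsilon_0}(a)$ is a transversal section of time $\tau$, that the orbit of $b$ does separate from that of $a$ under \emph{any} reparametrization keeping them in the sections — so the existence of the single bad $\gamma$ is itself the contradiction, exactly as in the last lines of Proposition \ref{3.08a}.

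The main obstacle I anticipate is the bookkeeping around reparametrizations and time windows: making precise that the tail $\alpha_i|_{[t_i,\infty)}$, recentered at $t_i$, is a legitimate element of $Rep(\R)^-$ after the substitution $s=t-t_i$ (it is increasing and sends $0$ to $0$, but one should check it extends to a homeomorphism of $\R$, or simply invoke Theorem \ref{1.01}, which only needs continuous functions fixing $0$), and keeping the indices straight when applying Theorem \ref{1.01} inductively over $[-n,0]$ and diagonalizing. A secondary subtlety is ensuring $a\neq b$: this comes for free from $d(a,b)\geq\varepsilon>0$. Once these points are handled, the contradiction with expansiveness (through uniform transversal expansiveness, Proposition \ref{3.08a}) closes the argument, and the corollary follows.
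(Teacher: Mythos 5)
There is a genuine gap, and it originates in your ``direction issue'' diagnosis, which is mistaken. For the recentered pair $\bigl(\phi_t(x),\phi_{\alpha(t)}(y)\bigr)$ with $t\geq t_\varepsilon$, the section condition coming from $y\in T^s_{\varepsilon_0}(x)$ holds for all recentered times $r\in[-t,+\infty)$: indeed, for $|r|\leq t_\varepsilon$ we have $t+r\geq 0$, so $\phi_{\alpha(t+r)}(y)\in H_{\varepsilon_0}(\phi_{t+r}(x))\subset B_{\varepsilon_0}(\phi_{t+r}(x))$. Hence the \emph{symmetric} window $|r|\leq t_\varepsilon$ is already fully controlled, with $\sup_{|r|\leq t_\varepsilon} d\bigl(\phi_{t+r}(x),\phi_{\alpha(t+r)}(y)\bigr)\leq\varepsilon_0$, and Proposition \ref{3.08a} (applied in contrapositive form to the recentered pair with $\beta(r)=\alpha(t+r)-\alpha(t)$) gives $d\bigl(\phi_t(x),\phi_{\alpha(t)}(y)\bigr)<\varepsilon$ at once. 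This is exactly the paper's two-line direct proof; your first paragraph had all the ingredients and no limiting argument, no Theorem \ref{1.01}, and no contradiction setup are needed.

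The ``fix'' you propose instead does not close: by keeping only the backward window $[-t_i,0]$ you throw away the forward control, and the limit reparametrization $\gamma$ you build is defined only on $(-\infty,0]$. Consequently the step ``in particular $\sup_{|s|\leq t_\varepsilon} d(\Phi^s_\gamma(a,b))\leq\varepsilon_0$'' is unjustified ($\Phi^s_\gamma$ is not even defined for $s>0$), so no contradiction with uniform transversal expansiveness is reached; and backward-only section-closeness cannot contradict expansiveness either, since expansiveness requires closeness for \emph{all} $t\in\R$ --- a point $b\neq a$ with $\phi_{\gamma(s)}(b)\in H_{\varepsilon_0}(\phi_s(a))$ for all $s\leq 0$ is just a point of the local unstable set of $a$, and such points genuinely exist (this is what the paper's main theorem produces), unlike in Proposition \ref{3.08a} where the containment holds on all of $\R$. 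A further, secondary problem is that the negation of the corollary yields a single bad pair $(y,t)$, not sequences with $t_i\to\infty$, so the inductive application of Theorem \ref{1.01} on the intervals $[-n,0]$ for every $n$ is not available in the first place.
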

\begin{proof}
Let $y\in T_{\varepsilon_{0}}^s(x)$ and $\alpha\in Rep(\mathbb{R}^+)$ be such that 
$$\phi_{\alpha(t)}(y)\in H_{\varepsilon_{0}}(\phi_t (x)) \,\,\,\,\,\, \text{for every} \,\,\,\,\,\, t\geq 0.$$ Consider $t\geq t_\varepsilon$ and define $\beta(y)(r)=\alpha(r+t)-\alpha(t)$. Note that $\beta \in Rep([-t,t])$ and that
    $$\sup_{|r|\leq t_\varepsilon}d(\phi_{\beta(y)(r)}\left(\phi_{\alpha(t)}(y) \right), \phi_{t+r} (x))=\sup_{|r|\leq t_\varepsilon}d(\phi_{\alpha(r+t)}(y), \phi_{t+r} (x))\leq \varepsilon_{0}.$$ According to Lemma \ref{3.08a}, we have that $\phi_{\alpha(t)}(y) \in B_\varepsilon(\phi_t (x))$.
\end{proof}
Following the idea of the definition of  transverse uniform expansiveness, we will define the uniform transversal cw-expansiveness.

\begin{definition}[Cw-expansiveness on transversal sections]
Let $(X,\phi)$ be a cw-expansive flow and $\eps_0>0$ be a cw-expansive constant. We say that $(X,\phi)$ is \textit{uniformly transversely cw-expansive} if for every $\varepsilon>0$, there  exists $t_\varepsilon>0$ such that if $x\in X$ and $C\subset H(x)$ is a continuum such that $diam(C)\geq \varepsilon$, then there exists $\alpha\in \h(C)$ such that
    $$\sup_{|r|\leq t_\varepsilon}diam(\Phi_{\alpha}^r(C))>\varepsilon_{0}.$$
\end{definition}
The following lemma characterizes the uniform growth of continua on transversal sections. 
\begin{proposition}\label{3.10a}
    Every cw-expansive flow is uniformly transversely cw-expansive.
\end{proposition}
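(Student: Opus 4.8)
The plan is to mirror the proof of Proposition~\ref{3.08a} (the expansive case, here Lemma~\ref{3.08a}), replacing the single pair of orbits by a sequence of continua and the single reparametrization by elements of $\h(C)$. So I would argue by contradiction: suppose there is some $\varepsilon>0$, a sequence $t_i\to\infty$, points $x_i\in X$, continua $C_i\subset H(x_i)$ with $\diam(C_i)\geq\varepsilon$, and $\alpha_i\in\h(C_i)$ such that
\[
\sup_{|r|\leq t_i}\diam\bigl(\Phi^{r}_{\alpha_i}(C_i)\bigr)\leq\varepsilon_0
\]
for every $i$. The goal is to extract a limit continuum $C\subset H(x)$ with $\diam(C)\geq\varepsilon$, a limiting reparametrization field $\beta\in\h(C)$, and the bound $\diam(\X^r_\beta(C))\leq\varepsilon_0$ for \emph{all} $r\in\R$, which by cw-expansiveness forces $C\subset\phi_{(-\tau,\tau)}(x_\beta)$; since $C\subset H(x)$ and $H(x)$ is a transversal section of time $\tau$, this intersection is a single point, contradicting $\diam(C)\geq\varepsilon$.

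The first step is the compactness extraction. Passing to a subsequence, $x_i\to x$ in $X$; using that $\SC(X)$ with the Hausdorff metric is compact (each $C_i$ lies in a bounded region, or one restricts attention to a compact piece), $C_i\to C$ for some $C\in\SC(X)$ with $\diam(C)\geq\varepsilon$, hence $C$ is nondegenerate. By Proposition~\ref{1.06} and the fact that each $C_i\subset H(x_i)$ with $x_i\to x$, one checks $C\subset H(x)$ (the sections vary upper-semicontinuously in the relevant sense: a Hausdorff limit of points in $H(x_i)$ lies in $\phi_{[-\eta,\eta]}(H(x))$ for all small $\eta$, hence in $H(x)$). The second, and main, step is to produce $\beta\in\h(C)$ as a limit of the $\alpha_i$. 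For this I would fix a countable dense set $\{z_j\}\subset C$, choose $z_i^{(j)}\in C_i$ with $z_i^{(j)}\to z_j$, and apply Theorem~\ref{1.01} to each sequence $\bigl(\alpha_i(z_i^{(j)})\bigr)_i$ on increasing intervals $[-n,n]$: since $\phi_{\alpha_i(z_i^{(j)})(t)}(z_i^{(j)})\in H(\phi_t(x_i))$ for $|t|\leq t_i$, the theorem gives uniform convergence on compacta to some $h_j:\R\to\R$ with $h_j(0)=0$ and $\phi_{h_j(t)}(z_j)\in H(\phi_t(x))$. A diagonal argument over $j$ and $n$ produces these simultaneously. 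One then defines $\beta(z_j)=h_j$ and extends $\beta$ continuously to all of $C$; continuity of the extension is exactly the content needed for $\beta\in\h(C)$, and here one again leans on Theorem~\ref{1.01} together with uniform equicontinuity of the $\alpha_i(\cdot)(t)$ coming from the transversal-section structure (as in the ``Continuity'' paragraph of Proposition~\ref{3.02}). One also needs $\beta(x_\beta)=\mathrm{id}_\R$ for some $x_\beta\in C$: since each $\alpha_i$ fixes some $x_{\alpha_i}\in C_i$ with $\alpha_i(x_{\alpha_i})=\mathrm{id}$, pass to a subsequence so $x_{\alpha_i}\to x_\beta\in C$ and observe the identity is preserved in the limit.

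Once $\beta\in\h(C)$ is in hand, the final step is routine: for each fixed $r\in\R$ choose $n$ with $r\in[-n,n]$ and $t_i\geq n$ eventually, so $\X^r_{\alpha_i}(C_i)$ is defined with diameter $\leq\varepsilon_0$; since $\X^r_{\alpha_i}(C_i)\to\X^r_\beta(C)$ in the Hausdorff metric (continuity of $\phi$ plus the convergences established above) and diameter is Hausdorff-continuous, $\diam(\X^r_\beta(C))\leq\varepsilon_0$. As $r$ was arbitrary, cw-expansiveness with constant $\varepsilon_0$ (associated to $\tau$) yields $C\subset\phi_{(-\tau,\tau)}(x_\beta)$, and then $C\subset H(x)\cap\phi_{(-\tau,\tau)}(x_\beta)$ is a singleton by injectivity of $\phi$ on $[-\tau,\tau]\times H(x)$, contradicting $\diam(C)\geq\varepsilon>0$.

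I expect the main obstacle to be the construction and, above all, the continuity of the limit field $\beta\in\h(C)$: producing $h_j$ pointwise on a dense set via Theorem~\ref{1.01} is clean, but checking that the resulting assignment extends to a member of $\h(C)$ — i.e.\ that $z\mapsto\beta(z)(t)$ is continuous on all of $C$ for every $t$, not just on the dense set — requires a genuine equicontinuity argument for the family $\{\alpha_i(\cdot)(t)\}_i$, uniform in $i$, which is where the quantitative control from Proposition~\ref{1.06} (the $\delta$--$\eta$ estimate relating nearby sections) must be invoked carefully, exactly as in the continuity paragraph of Proposition~\ref{3.02}. A secondary technical point is ensuring the Hausdorff limit $C$ actually sits inside the single section $H(x)$ and is nondegenerate, but this follows from the section field's regularity and the lower bound $\diam(C_i)\geq\varepsilon$.
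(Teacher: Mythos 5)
Your proposal is correct and takes essentially the same route as the paper: the paper's proof is a one-line remark that one repeats the argument of Proposition~\ref{3.08a}, only noting that the Hausdorff limit of a sequence of continua is again a continuum, which is exactly the contradiction-plus-limit scheme (via Theorem~\ref{1.01} and the transversal section of time $\tau$) that you spell out. Your extra care with the construction and continuity of the limit field $\beta\in\h(C)$ fills in details the paper leaves implicit (and which it handles similarly in Proposition~\ref{lema3.03}), but it is the same argument.
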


\begin{proof}
Its proof follows the same idea as the proof of Lemma \ref{3.08a}.
We only need to remark that the limit of a sequence of continua, with respect to the Hausdorff metric, is
also a continuum.
\end{proof}
Let $CT^{s}_\varepsilon(x)$ and $CT^{u}_\varepsilon(x)$ denote the connected components of $x$ in $T^{s}_\varepsilon(x)$ and $T^{u}_\varepsilon(x)$, respectively. These sets are called the $\eps$-stable sectionally geometric continuum and $\eps$-unstable sectionally geometric continuum of $x$. In the following corollary, we obtain a uniform contraction on the sectionally geometric local stable/unstable continua.
\begin{corollary} \label{3.09a}
    For every $\varepsilon>0$ and $x\in X$, 
    $$\Phi_{\alpha}^t(CT^s_{\varepsilon_0/2}(x))\subset B_\varepsilon(\phi_t(x)) \ \mbox{ if } \  t \geq t_\varepsilon.$$
\end{corollary}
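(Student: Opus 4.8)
The plan is to deduce this from Corollary \ref{3.091} essentially for free, once I observe that $CT^s_{\varepsilon_0/2}(x)$ is a connected subset of $T^s_{\varepsilon_0}(x)$. The content of Corollary \ref{3.091} is the inclusion $\Phi_\alpha^t\bigl(T^s_{\varepsilon_0}(x)\bigr)\subset B_\varepsilon(\phi_t(x))$ for all $t\geq t_\varepsilon$; since $CT^s_{\varepsilon_0/2}(x)$ is by definition the connected component of $x$ inside $T^s_{\varepsilon_0/2}(x)$, and $T^s_{\varepsilon_0/2}(x)\subseteq T^s_{\varepsilon_0}(x)$ (local stable sets are monotone in the radius parameter, which is immediate from Definition \ref{3.01}), we have $CT^s_{\varepsilon_0/2}(x)\subseteq T^s_{\varepsilon_0}(x)$ and the desired conclusion follows by restricting the inclusion to this subset.

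First I would record the trivial inclusion $T^s_{\varepsilon_0/2}(x)\subseteq T^s_{\varepsilon_0}(x)$: if $y\in T^s_{\varepsilon_0/2}(x)$ with witnessing reparametrization $\alpha\in Rep(\R)^+$, then $\phi_{\alpha(t)}(y)\in H_{\varepsilon_0/2}(\phi_t(x))\subseteq H_{\varepsilon_0}(\phi_t(x))$ for all $t\geq 0$, so the same $\alpha$ witnesses $y\in T^s_{\varepsilon_0}(x)$. Consequently $CT^s_{\varepsilon_0/2}(x)$, being a subset of $T^s_{\varepsilon_0/2}(x)$, is a subset of $T^s_{\varepsilon_0}(x)$. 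Next, I would invoke Corollary \ref{3.091} with this value of $\varepsilon$ to get, for every $y\in T^s_{\varepsilon_0}(x)$ and every $t\geq t_\varepsilon$, that the second coordinate of $\Phi_\alpha^t(x,y)$ lies in $B_\varepsilon(\phi_t(x))$. Intersecting with $CT^s_{\varepsilon_0/2}(x)$ gives the statement.

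There is one point that deserves a remark rather than being a genuine obstacle: in passing from $T^s$ to its connected component $CT^s$ one should check that the reparametrization field $\alpha\in\h(CT^s_{\varepsilon_0/2}(x))$ appearing implicitly in $\Phi_\alpha^t$ is consistent with the pointwise reparametrizations $\alpha(y)\in Rep(\R)^+$ supplied by membership in $T^s_{\varepsilon_0}(x)$. This is handled by Theorem \ref{1.01}: the reparametrizations attached to points of the section vary continuously, so on the continuum $CT^s_{\varepsilon_0/2}(x)$ they assemble into an element of $\h$, and for each such point the pointwise estimate of Corollary \ref{3.091} applies. The reason $\varepsilon_0/2$ (rather than $\varepsilon_0$) is used for the continuum is cosmetic here but matters later: it leaves room to keep the whole continuum strictly inside the section of radius $\varepsilon_0$ under the relevant perturbations; for the present statement any radius $\le\varepsilon_0$ would do, so no difficulty arises. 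Thus the proof is a one-line restriction argument, and I do not expect a substantive obstacle.
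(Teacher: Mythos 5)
Your reduction to Corollary \ref{3.091} does not work, because that corollary is a statement about \emph{expansive} flows: its proof rests on Proposition \ref{3.08a} (uniform transversal expansiveness), which gives pointwise separation of orbits and is not available here. Corollary \ref{3.09a} is stated in the cw-expansive setting, where $\varepsilon_0$ is a cw-expansiveness constant and $t_\varepsilon$ is the time furnished by Proposition \ref{3.10a} (uniform transversal cw-expansiveness); a cw-expansive flow need not be expansive, and an individual point $y\in T^s_{\varepsilon_0}(x)$ need not separate from $x$ at all --- only continua of diameter at least $\varepsilon$ are forced to grow. This is precisely why the statement concerns the connected component $CT^s_{\varepsilon_0/2}(x)$ rather than the whole set $T^s_{\varepsilon_0}(x)$, so ``restricting the inclusion of Corollary \ref{3.091} to a subset'' begs the question: the inclusion you want to restrict is simply not known (and in general false) for cw-expansive flows, and in any case the constants $\varepsilon_0$ and $t_\varepsilon$ you would import from the expansive statement are not the ones appearing in Corollary \ref{3.09a}.

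The paper's proof instead repeats the argument of Corollary \ref{3.091} with Proposition \ref{3.10a} in place of Proposition \ref{3.08a}: writing $C=CT^s_{\varepsilon_0/2}(x)$ with its reparametrization field $\alpha$, for $t\ge t_\varepsilon$ one shifts the field by $\beta(y)(r)=\alpha(y)(r+t)-\alpha(y)(t)$ and observes that for $|r|\le t_\varepsilon$ the continuum $\Phi_\alpha^{t+r}(C)$ lies in $H_{\varepsilon_0/2}(\phi_{t+r}(x))$, hence has diameter at most $\varepsilon_0$; the contrapositive of Proposition \ref{3.10a}, applied to the continuum $\Phi_\alpha^{t}(C)\subset H(\phi_t(x))$, then forces $\diam(\Phi_\alpha^{t}(C))<\varepsilon$, and since $\phi_t(x)\in\Phi_\alpha^{t}(C)$ this gives $\Phi_\alpha^{t}(C)\subset B_\varepsilon(\phi_t(x))$. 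This also shows that your remark that the radius $\varepsilon_0/2$ is ``cosmetic here'' is mistaken: it is exactly what guarantees the diameter bound $\varepsilon_0$ needed to invoke (the negation of) uniform transversal cw-expansiveness; with sections of radius $\varepsilon_0$ the iterates could have diameter up to $2\varepsilon_0$ and the argument would not close.
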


\begin{proof}
Its proof follows the proof of Corollary $\ref{3.091}$ using Lemma $\ref{3.10a}$ instead of Proposition 3.6.
\end{proof}

\section{More about cw-expansive flows}

In this section, we prove important consequences of cw-expansiveness translating results of Kato to the case of continuous flows using the field of transversal sections. We begin this section defining the families of local stable/unstable continua
following the notations in \cites{Kato93,Kato93B} and the distinct notions of kinematic, geometric, and sectional geometric introduced in Section 2.
\begin{definition}[Kinematic local stable/unstable continua]
For each $\varepsilon>0$, let $\mathcal{C}^s_\varepsilon$ be the family of \textit{kinematic $\varepsilon$-stable continua} and $\mathcal{C}^u_\varepsilon$ be the family of \textit{kinematic $\varepsilon$-unstable continua} defined by:
$$\mathcal{C}^s_\varepsilon=\{A\in  \mathcal{C}(X); \d(\phi_t(A))\leq \varepsilon\, \, \text{for every} \, \, t \geq 0\},$$
$$\mathcal{C}^u_\varepsilon=\{A\in \mathcal{C}(X); \d(\phi_t(A))\leq \varepsilon\, \, \text{for every} \, \, t\leq 0\}.$$
\end{definition}
Given $A\in\mathcal{C}(X)$, we denote by $\mathcal{H}(A)^+$ the set of all maps $\alpha\colon A\rightarrow C([0,\infty))$, and by $\mathcal{H}(A)^-$ the set of all maps $\alpha\colon A\rightarrow C((-\infty,0])$, satisfying the same properties as belonging to $\mathcal{H}(A)$.

\begin{definition}[Geometric local stable/unstable continua]
For each $\varepsilon > 0$, let $\mathcal{C}^{ss}_\varepsilon$ be the family of \textit{geometric $\varepsilon$-stable continua} and $\mathcal{C}^{uu}_\varepsilon$ be the family of \textit{geometric $\varepsilon$-unstable continua} defined by:
    $$\mathcal{C}^{ss}_\varepsilon=\{A\in \mathcal{C}(X);\exists \ \alpha\in \mathcal{H}(A)^+\ \text{such that} \  \d(\x_{\alpha}^t(A))\leq \varepsilon\, \, \text{for every} \, \, t \geq 0\},$$
    $$\mathcal{C}^{uu}_\varepsilon=\{A\in \mathcal{C}(X); \exists \ \alpha\in \mathcal{H}(A)^-\ \text{such that} \ \d(\x_{\alpha}^t(A))\leq \varepsilon\, \, \text{for every} \, \, t\leq 0\}.$$
\end{definition}

\begin{definition}[Sectional flow acting on continua]
Let $I \subseteq \mathbb{R}$ be an interval. If there exists $\alpha \in \mathcal{H}(A)$ such that $$\mathcal{X}^t_{\alpha}(A)\subset H(\phi_t(x_\alpha)) \,\,\,\,\,\, \text{for every} \,\,\,\,\,\, t\in I,$$ then we define the sectional flow
$$\Phi_{\alpha}^t(x_{\alpha},A)=(\phi_t(x_\alpha), \mathcal{X}^
t_{\alpha}(A))  \,\,\,\,\,\, \text{for every} \,\,\,\,\,\, t\in I.$$
Thus, whenever we use the notation of the sectional flow, the implicit assumption is that the iterate $\Phi_{\alpha}^t(x_{\alpha},A)$ is contained in the section $H(\phi_t(x_\alpha))$. Often, we denote the continuum $\Phi_{\alpha}^t(x_{\alpha},A)$ simply by $\Phi_{\alpha}^t(A)$, when it is clear who $x_{\alpha}$ is.
\end{definition}
The next step is to define the families of local stable/unstable continua on the transversal sections.

\begin{definition}[Sectionally geometric local stable/unstable continua]\label{def3.04}
For any $\varepsilon>0$, let $\mathcal{T}^{s}_\varepsilon$ be the family of \textit{sectionally geometric $\varepsilon$-stable continua} and $\mathcal{T}^{u}_\varepsilon$ be the family of \textit{sectionally geometric $\varepsilon$-unstable continua} defined by:
$$\mathcal{T}^{s}_\varepsilon=\{A\in \mathcal{C}(X); \exists \ \alpha\in \mathcal{H}(A)^+ \, \, \text{such that} \ \Phi^t_{\alpha}(A)\subset H_\varepsilon(\phi_t (x_\alpha)) \, \, \text{for every}\,\, t\geq 0 \}$$
$$\mathcal{T}^{u}_\varepsilon=\{A\in \mathcal{C}(X); \exists \ \alpha\in \mathcal{H}(A)^- \ \text{such that} \  \Phi^
	t_{\alpha}(A)\subset H_\varepsilon(\phi_t (x_\alpha)) \, \, \text{for every}\,\,  t\leq 0 \}.$$
\end{definition}
The following proposition characterizes cw-expansiveness with respect to the families of sectionally geonetric local stable/unstable continua.

\begin{prop}\label{pro3.06}
If $(X, \phi_t)$ is a flow without fixed points, then $(X, \phi_t)$ is cw-expansive if, and only if, there exists $\delta > 0$ such that $$\mathcal{T}^{s}_\delta \cap \mathcal{T}^{u}_\delta = \{A \in \mathcal{C}(X); \#A = 1\}.$$
\end{prop}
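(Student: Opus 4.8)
The plan is to prove both implications separately, exploiting the characterizations of the sectionally geometric continua and the uniform transversal cw-expansiveness established in Sections 3 and 4.

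For the easy direction, suppose $(X,\phi_t)$ is cw-expansive with cw-expansiveness constant $\delta$ associated to $\tau$. Take $A \in \mathcal{T}^{s}_{\delta'} \cap \mathcal{T}^{u}_{\delta'}$ for a suitable small $\delta' \le \delta$ to be fixed below; I want to show $\#A = 1$. There exist $\alpha^+ \in \mathcal{H}(A)^+$ and $\alpha^- \in \mathcal{H}(A)^-$ witnessing the stable, resp.\ unstable, membership, so that $\Phi^t_{\alpha^+}(A) \subset H_{\delta'}(\phi_t(x_{\alpha^+}))$ for $t \ge 0$ and $\Phi^t_{\alpha^-}(A) \subset H_{\delta'}(\phi_t(x_{\alpha^-}))$ for $t \le 0$. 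The subtlety is that these two reparametrizations need not agree at $t = 0$, and $x_{\alpha^+}$ need not equal $x_{\alpha^-}$; but both base points lie in $A$, and since $\alpha^\pm(x_{\alpha^\pm}) = \mathrm{id}$, we can glue: define $\alpha \in \mathcal{H}(A)$ by splicing $\alpha^-$ on $(-\infty,0]$ and $\alpha^+$ on $[0,\infty)$ after normalizing so that both send $0$ to $0$ at a common base point (using continuity of the family and the fact that $A$ is connected, so the two base points are joined by a path in $A$ along which the reparametrizations vary continuously). Once we have a single $\alpha \in \mathcal{H}(A)$ with $\mathcal{X}^t_\alpha(A) \subset H_{\delta'}(\phi_t(x_\alpha))$ for all $t \in \mathbb{R}$, the diameter bound $\d(\mathcal{X}^t_\alpha(A)) \le 2\delta' \le \delta$ holds for all $t$, hence by cw-expansiveness $A \subset \phi_{(-\tau,\tau)}(x_\alpha)$. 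But $A \subset \phi_{(-\tau,\tau)}(x_\alpha)$ together with $A$ staying inside a single transversal section $H_{\delta'}(\phi_0(x_\alpha)) = H_{\delta'}(x_\alpha)$, which (being a transversal section of time $\tau$) meets $\phi_{[-\tau,\tau]}(x_\alpha)$ only in $\{x_\alpha\}$, forces $A = \{x_\alpha\}$, so $\#A = 1$.

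For the converse, suppose such a $\delta > 0$ exists but $(X,\phi_t)$ is not cw-expansive; I derive a contradiction by producing a non-degenerate continuum in $\mathcal{T}^{s}_\delta \cap \mathcal{T}^{u}_\delta$. Negating cw-expansiveness, for some $\varepsilon_0 > 0$ there are continua $A_n \in \mathcal{C}(X)$, maps $\alpha_n \in \mathcal{H}(A_n)$, and base points $x_n := x_{\alpha_n}$ with $\d(\mathcal{X}^t_{\alpha_n}(A_n)) < 1/n$ for all $t \in \mathbb{R}$, yet $A_n \not\subset \phi_{(-\varepsilon_0,\varepsilon_0)}(x_n)$; pick $y_n \in A_n$ with $y_n \notin \phi_{(-\varepsilon_0,\varepsilon_0)}(x_n)$. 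Passing to a subsequence, $x_n \to x$, $y_n \to y$, and (since the diameters shrink and we may first fatten, then take Hausdorff limits of the $A_n$ as in Proposition \ref{3.10a}) we get a limit continuum $A \ni x, y$. Because $y_n$ avoids $\phi_{(-\varepsilon_0,\varepsilon_0)}(x_n)$ and the flow has no fixed points, $y \ne x$, so $\#A \ge 2$. The remaining task is to upgrade the kinematic smallness $\d(\mathcal{X}^t_{\alpha_n}(A_n)) < 1/n$ to sectional membership of the limit $A$ in both $\mathcal{T}^{s}_\delta$ and $\mathcal{T}^{u}_\delta$: after reparametrizing each point of $A_n$ by a bounded amount so its orbit meets the section $H(\phi_t(x_n))$ (possible once the diameters are below the tube radius $r$, using Proposition \ref{1.06} to control the required time shifts), we obtain modified $\tilde\alpha_n \in \mathcal{H}(A_n)$ with $\mathcal{X}^t_{\tilde\alpha_n}(A_n) \subset H_{\delta}(\phi_t(x_n))$; then Theorem \ref{1.01}, applied on increasing intervals $[-k,k]$ and diagonalizing as in the proof of Proposition \ref{3.08a}, yields a limiting $\alpha \in \mathcal{H}(A)$ (the family-version limit, using that continuity in the point of $A$ survives the limit) with $\mathcal{X}^t_\alpha(A) \subset H_\delta(\phi_t(x_\alpha))$ for all $t \in \mathbb{R}$. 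Restricting $\alpha$ to $[0,\infty)$ and to $(-\infty,0]$ shows $A \in \mathcal{T}^{s}_\delta$ and $A \in \mathcal{T}^{u}_\delta$, so $A \in \mathcal{T}^{s}_\delta \cap \mathcal{T}^{u}_\delta$ with $\#A \ge 2$, contradicting the hypothesis. Hence $(X,\phi_t)$ is cw-expansive.

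The main obstacle I anticipate is the bookkeeping around the reparametrization families $\mathcal{H}(\cdot)$: one must carefully check that the bounded time-shifts pushing each orbit onto the relevant section can be chosen to depend continuously on the point of the continuum (so that $\tilde\alpha_n$ genuinely lies in $\mathcal{H}(A_n)$), and that this continuity, together with the uniform convergence from Theorem \ref{1.01}, passes to the Hausdorff limit to give a bona fide element of $\mathcal{H}(A)$ — in particular that the limiting base point is the limit of the $x_n$ and that $\alpha(x_\alpha) = \mathrm{id}$ survives. The gluing step in the easy direction (reconciling $\alpha^+$ and $\alpha^-$ across $t=0$ at possibly different base points of $A$) is the other place where connectedness of $A$ must be used with care rather than waved at.
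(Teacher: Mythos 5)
Your forward direction is essentially the paper's argument: once a single family $\alpha\in\mathcal{H}(A)$ keeps the images $\mathcal{X}^t_\alpha(A)$ inside $H_{\delta}(\phi_t(x_\alpha))$ for all $t\in\mathbb{R}$, cw-expansiveness gives $A\subset\phi_{(-\tau,\tau)}(x_\alpha)$, and a transversal section of time $\tau$ meets that orbit segment only at $x_\alpha$. The gluing of $\alpha^+$ and $\alpha^-$ that you flag is real (the paper silently assumes a single $\alpha$), but your sketch of it is not yet a proof: elements of $\mathcal{H}(A)^{\pm}$ are merely continuous functions, not invertible reparametrizations, so ``normalizing so that both send $0$ to $0$ at a common base point'' cannot be done by composing with an inverse; what actually makes the splice work is the section structure (uniqueness of the hitting time of $H(x_\alpha)$ inside the flow box), not connectedness of $A$.

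The backward direction is where you genuinely diverge from the paper, and where there is a gap. You negate cw-expansiveness, pick $y_n\in A_n\setminus\phi_{(-\varepsilon_0,\varepsilon_0)}(x_n)$, pass to Hausdorff limits, and claim $y\neq x$ ``because $y_n$ avoids $\phi_{(-\varepsilon_0,\varepsilon_0)}(x_n)$ and the flow has no fixed points.'' That implication is false: $y_n$ may approach $x_n$ transversally, off the orbit segment, so $d(y_n,x_n)\to 0$ is entirely compatible with $y_n\notin\phi_{(-\varepsilon_0,\varepsilon_0)}(x_n)$, and the limit continuum can degenerate to a point (or collapse into a piece of the orbit of $x$, which after projection into the section is again a single point); in either case no contradiction with the hypothesis is produced. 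The failure of cw-expansiveness gives no uniform metric separation, only separation from an orbit segment, and separation has to be measured transversally, inside the sections, before any limit is taken — this is exactly what your construction does not provide. The paper avoids the limiting procedure altogether: given any $A$ with $\alpha\in\mathcal{H}(A)$ and $\d(\mathcal{X}^t_\alpha(A))\leq\delta_1$ for all $t\in\mathbb{R}$, it notes $A\subset W^{ss}_{\delta_1}(x_\alpha)\cap W^{uu}_{\delta_1}(x_\alpha)$, projects by $\pi_{x_\alpha}$ and applies Proposition \ref{3.02} to get $\pi_{x_\alpha}(A)\subset T^{s}_{\varepsilon}(x_\alpha)\cap T^{u}_{\varepsilon}(x_\alpha)$, so the hypothesis forces $\pi_{x_\alpha}(A)=\{x_\alpha\}$ and hence $A\subset\phi_{[-\varepsilon,\varepsilon]}(x_\alpha)$ — cw-expansiveness directly, with no compactness or nondegeneracy issues. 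If you insist on a contradiction scheme, you would have to route the separation through the sections first (in the spirit of Proposition \ref{3.10a}); as written, your limit argument does not yield the required non-degenerate element of $\mathcal{T}^{s}_\delta\cap\mathcal{T}^{u}_\delta$, and the subsequent upgrade of kinematic smallness to a continuum of reparametrizations on the limit is also only sketched.
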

\begin{proof}
 $(\rightarrow)$ Let $\varepsilon \in (0, \tau)$ and $\delta>0$ be a cw-expansive constant of $\phi$ associated to $\eps$. Suppose there exists a non-degenerate continuum $A \in \mathcal{T}^s_\delta \cap \mathcal{T}^u_\delta$. From Definition \ref{def3.04}, there exists $\alpha \in \mathcal{H}(A)$ such that 
 $$\Phi^t_{\alpha}(A) \subset H_\delta(\phi_t(x_\alpha)) \,\,\,\,\,\, \text{for every} \,\,\,\,\,\, t \in \mathbb{R}.$$ In particular, 
 $$\d(\Phi^t_\alpha(A))\leq \delta \,\,\,\,\,\, \text{for every} \,\,\,\,\,\, t \in \mathbb{R}$$ and the cw-expansiveness ensures that $A \subset \phi_{(-\varepsilon, \varepsilon)}(x_\alpha)$. Since $A$ is contained in a transversal section, it follows that $A = {x_\alpha}$.
 
 $(\leftarrow)$ For each $\varepsilon \in (0, \tau)$, let $\delta_1 > 0$ be given by Proposition \ref{3.02} and assume also that $B_{\delta_1}(x) \subset \phi_{[-\varepsilon, \varepsilon]}(H(x))$ for every $x\in X$. Suppose that $A \in \mathcal{C}(X)$, $\alpha \in \mathcal{H}(A)$, and 
 $$\d(\mathcal{X}^\alpha_t(A)) \leq \delta_1 \,\,\,\,\,\, \text{for every} \,\,\,\,\,\, t \in \mathbb{R}.$$ This means that $A \subseteq W^{ss}_{\delta_1}(x_{\alpha}) \cap W^{uu}_{\delta_1}(x_{\alpha})$ and hence, Proposition $\ref{3.02}$ ensures that $\pi_{x_{\alpha}}(A) \subseteq T^{s}_{\varepsilon}(x_{\alpha}) \cap T^{u}_{\varepsilon}(x_{\alpha})$. Since $\pi_{x_{\alpha}}(A) \in \mathcal{C}(X)$, we conclude that $\pi_{x_{\alpha}}(A) = \{x_\alpha\}$, which proves that $A \subset \phi_{[-\varepsilon,\varepsilon]}(x_\alpha)$. This proves cw-expansiveness.
\end{proof}

The following proposition states that if a continuum increases in diameter over a certain period of time, then in the subsequent periods, it cannot decrease significantly in diameter (see \cite{Kato93}*{Proposition 2.3} for the case of cw-expansive homeomorphisms).

\begin{proposition}\label{lema3.03}
    If $(X,\phi)$ is a cw-expansive flow, then for each $\varepsilon > 0$, there exists $\delta > 0$ such that if $C \subset H(x)$ is a continuum with $x \in C$, $\alpha \in \h(C)$, $\d(C) \leq \delta$, and $\d(\Phi_{\alpha}^s(x,C)) \leq \delta$ for some $s > 0$, then $\d(\Phi_{\alpha}^t(x,C)) < \varepsilon$ for all $t \in [0, s]$.
\end{proposition}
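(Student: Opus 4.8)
The idea is to argue by contradiction and manufacture a non-degenerate continuum that lies in a single transversal section and whose \emph{entire} $\mathbb{R}$-orbit, under a suitable $\gamma\in\h(\cdot)$, has diameter below the cw-expansiveness constant; by cw-expansiveness such a continuum must reduce to a point, a contradiction. The device producing this long orbit is a \textbf{no-sudden-growth} estimate for the sectional flow at small scales: while $\d(\Phi^t_\alpha(x,D))$ stays below some fixed $\rho_0$, one step of the sectional flow can change the diameter only by a fixed modulus, so a tiny continuum needs \emph{arbitrarily long} time to swell up to size $\varepsilon$ and, symmetrically, to collapse back. Throughout I use that the transversal sections of the fixed field $H$ have uniformly bounded diameter, and I take for granted that this common bound is smaller than the scale $\rho_0$ of the estimate below and than a cw-expansiveness constant $\varepsilon_0$ associated with $\tau$ (arrangeable at the outset, since it only shrinks the constants in the definition of $H$); in particular every quantity $\d(\Phi^t_\alpha(x,C))$ occurring below is automatically $<\varepsilon_0\le\rho_0$.

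First I would record the no-sudden-growth estimate: there exist $\rho_0>0$, $\eta>0$ and increasing moduli $\omega,\bar\omega$ with $\omega(0)=\bar\omega(0)=0$ such that whenever $D\subset H(x)$, $\alpha\in\h(D)$, $a<b$, $b-a\le\eta$ and $\Phi^r_\alpha(x,D)\subset H(\phi_r(x))$ with $\d(\Phi^r_\alpha(x,D))\le\rho_0$ for all $r\in[a,b]$, one has $\d(\Phi^b_\alpha(x,D))\le\omega(\d(\Phi^a_\alpha(x,D)))$ and $\d(\Phi^a_\alpha(x,D))\le\bar\omega(\d(\Phi^b_\alpha(x,D)))$. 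Iterating over consecutive intervals of length $\eta$ yields $\d(\Phi^t_\alpha(x,D))\le\omega^{(\lceil t/\eta\rceil)}(\d(D))$ for $t\ge 0$, and symmetrically backwards with $\bar\omega$. Proving this estimate is, I expect, the main technical point: it is where the \emph{uniform} constants $\tau,r$ of $H$ and Proposition~\ref{1.06} enter, the point being that while the continuum stays within $\rho_0$ of $\phi_r(x)$ it is dragged ``alongside'' the orbit of $x$, so the underlying reparametrizations have bounded slope on short time-intervals and the estimate reduces to uniform continuity of $\phi$ on bounded time-intervals together with the uniform nesting of sections.

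Now assume the statement fails for some $\varepsilon>0$; since the conclusion for a target $\varepsilon'$ implies it for all $\varepsilon\ge\varepsilon'$, I may take $\varepsilon<\varepsilon_0$. Applying the negation with $\delta=1/n$, for each $n$ we get $x_n\in X$, a continuum $C_n\subset H(x_n)$ with $x_n\in C_n$, $\alpha_n\in\h(C_n)$, and $s_n>0$ with $\d(C_n)\le1/n$, $g_n(s_n)\le1/n$ and $g_n(t)\ge\varepsilon$ for some $t\in(0,s_n)$, where $g_n(t):=\d(\Phi^t_{\alpha_n}(x_n,C_n))$. Each $g_n$ is continuous on $[0,s_n]$; choose $\mu_n\in(0,s_n)$ with $g_n(\mu_n)=M_n:=\max_{[0,s_n]}g_n\in[\varepsilon,\varepsilon_0)$. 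Since $g_n\le M_n<\rho_0$ on all of $[0,s_n]$, the iterated estimate gives $\varepsilon\le M_n=g_n(\mu_n)\le\omega^{(\lceil\mu_n/\eta\rceil)}(g_n(0))\le\omega^{(\lceil\mu_n/\eta\rceil)}(1/n)$, and since $\omega^{(K)}(1/n)\to0$ as $n\to\infty$ for each fixed $K$, necessarily $\mu_n\to\infty$. Running the estimate backwards from $s_n$ with $\bar\omega$ and $g_n(s_n)\le1/n$ gives likewise $s_n-\mu_n\to\infty$.

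Finally I would pass to the limit. Put $D_n:=\Phi^{\mu_n}_{\alpha_n}(x_n,C_n)\subset H(\phi_{\mu_n}(x_n))$, so $\d(D_n)=M_n\ge\varepsilon$, and let $\gamma_n\in\h(D_n)$ be the time-shift of $\alpha_n$ by $\mu_n$ (i.e.\ $\gamma_n(z)(r)=\alpha_n(y)(\mu_n+r)-\alpha_n(y)(\mu_n)$ for the point $z\in D_n$ corresponding to $y\in C_n$ under the $\mu_n$-time sectional flow); then $\Phi^r_{\gamma_n}(D_n)=\Phi^{\mu_n+r}_{\alpha_n}(x_n,C_n)$, hence $\d(\Phi^r_{\gamma_n}(D_n))=g_n(\mu_n+r)<\varepsilon_0$ for $r\in[-\mu_n,s_n-\mu_n]$. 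Along a subsequence, $\phi_{\mu_n}(x_n)\to z_*$, $D_n\to D_*$ in $\C(X)$ with $\d(D_*)=\lim M_n\ge\varepsilon$, and — invoking Theorem~\ref{1.01} on the growing intervals $[-n,n]$, exactly as in the proof of Proposition~\ref{3.08a} — $\gamma_n\to\gamma_*\in\h(D_*)$ with $\Phi^r_{\gamma_*}(D_*)\subset H(\phi_r(z_*))$ and $\d(\Phi^r_{\gamma_*}(D_*))=\lim_n g_n(\mu_n+r)\le\lim_n M_n<\varepsilon_0$ for \emph{every} $r\in\mathbb{R}$ (here $\mu_n\to\infty$ and $s_n-\mu_n\to\infty$ guarantee the bound holds for all $r$). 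By cw-expansiveness, $D_*\subset\phi_{(-\tau,\tau)}(z_*)$, and since $D_*\subset H(z_*)$ with $H(z_*)$ a transversal section of time $\tau$, $D_*=\{z_*\}$ — contradicting $\d(D_*)\ge\varepsilon>0$. This rules out a ``bump'' of $t\mapsto\d(\Phi^t_\alpha(x,C))$ between the two small values at $0$ and at $s$, which is precisely the assertion. The steps I expect to carry the weight are the no-sudden-growth estimate and, folded into the standing assumption, the uniform smallness of the transversal sections (equivalently, an a priori global diameter bound); the rest is compactness of $\C(X)$ plus a direct appeal to cw-expansiveness.
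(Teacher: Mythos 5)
Your overall contradiction-and-limit scheme runs parallel to the paper's, but two of your deviations leave genuine gaps, and one of them is serious. The serious one is the standing assumption that the sections of the fixed field $H$ have a uniform diameter bound smaller than your $\rho_0$ and than the cw-expansiveness constant $\varepsilon_0$. That assumption does essential work twice in your plan: it is what caps $M_n=\max_{[0,s_n]}g_n$ below $\varepsilon_0$, and it is what makes the limiting continuum's sectional iterates small enough to invoke cw-expansiveness at the end. But it is not part of the setting (the field fixed in Sections 2--3 comes only with the constants $\tau$ and $r$; nothing bounds $\d(H(x))$), and the claim that it is ``arrangeable at the outset'' does not hold up: by the convention built into the sectional flow, the hypothesis of Proposition~\ref{lema3.03} is that $\mathcal{X}^t_\alpha(C)\subset H(\phi_t(x))$ for the \emph{given} field, so replacing $H$ by a shrunken field strengthens that hypothesis and the statement you would prove is strictly weaker than the one required (and later used); moreover, in the general setting of this proposition it is not even clear that $x\mapsto H(x)\cap B_\rho(x)$ is again a field of transversal sections with uniform constants. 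The paper's substitute for this smallness is exactly the step your plan omits: it uses the order-arc theorem \cite[Theorem 1.26]{N} to select sub-continua $D_k\subseteq C_k$ with $\sup_{t\in[0,s_k]}\d(\Phi^t_{\alpha_k}(x_k,D_k))=\varepsilon$ exactly, attained at some time $r_k$, so that the limit continuum automatically has all its sectional iterates of diameter at most $\varepsilon$, with $\varepsilon$ chosen below a cw-expansiveness constant; no bound on the sections is needed. Without either a justified smallness reduction or such a selection, your $M_n$ has no a priori bound and the final appeal to cw-expansiveness collapses.

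The second issue is the ``no-sudden-growth'' estimate, which you leave unproved and yourself flag as the main technical point; as written the plan rests on it. It is plausible (bounded slope of the reparametrizations on short intervals should follow from Proposition~\ref{1.06} while the iterates stay $\rho_0$-small), but it is also more machinery than the step requires: since $g_n(0)\le 1/n$ and $g_n(s_n)\le 1/n$ while $g_n$ reaches $\varepsilon$ in between, the divergence of the bump time from both endpoints already follows from uniform continuity of $\phi$ on bounded time intervals, which is precisely how the paper obtains $r_k\to\infty$ and $s_k-r_k\to\infty$; no iterated modulus is needed. The remainder of your limit step (time-shifted reparametrizations, Theorem~\ref{1.01} applied on growing intervals as in Proposition~\ref{3.08a}, the conclusion $D_*\subset H(z_*)$, and the contradiction with $H(z_*)$ being a section of time $\tau$) does match the paper's argument.
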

\begin{proof}
    Suppose there exists $\varepsilon\in(0,r)$ and sequences $\delta_k \rightarrow 0$ as $k \rightarrow \infty$, continua $C_k \subset H(x_k)$, maps $\alpha_k \in \h(C_k)$, and real numbers $0 \leq t_k < s_k$ such that $\d(C_k) \leq \delta_k$,  $\d(\Phi^{s_k}_{\alpha_k}(x_k,C_k))\leq \delta_k$, and $\d(\Phi^{t_k}_{\alpha_k}(x_k,C_k))> \varepsilon$.
    According to \cite[Theorem 1.26]{N}, there exists a path $c_k: [0, 1] \rightarrow \mathcal{C}(X)$ from $\{x_k\}$ to $C_k$ such that if $a \leq b$, then $c_k(a) \subset c_k(b)$. Consider the function
    	\begin{align*}F_k:[0,1]&\rightarrow [0,\infty]\\
		r&\mapsto F_k(r)=\sup\{\d(\Phi_{\alpha_k}^t(x_k,c_k(r))): t\in [0, s_k]\}.\end{align*}
If $a_k \in F_k^{-1}(\varepsilon)$ (note that $F_k^{-1}(\varepsilon) \neq \emptyset$ because of the assumptions), then $D_k = c_k(a_k) \subseteq C_k$ is such that
\begin{align*}
    \d(\Phi^t_{\alpha_k}(D_k)) &\leq \varepsilon \,\,\,\,\,\, \text{for every} \,\,\,\,\,\, t\in [0, s_k] \,\,\,\,\,\, \text{and}\\
    \d(\Phi^{r_k}_{\alpha_k}(D_k)) &= \varepsilon \,\,\,\,\,\, \text{for some} \,\,\,\,\,\, r_k \leq s_k,
\end{align*}
see Figure \ref{fig1a}.
  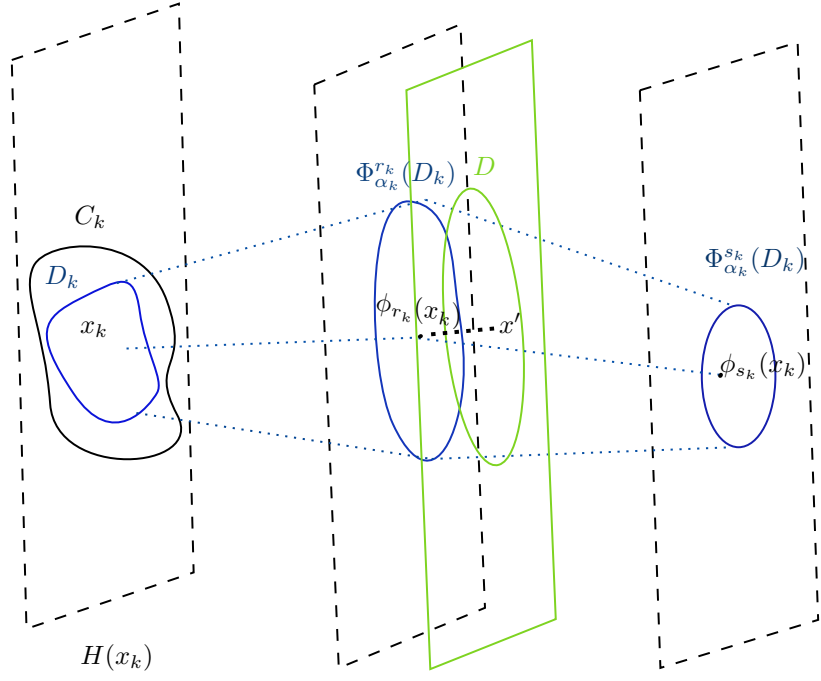
\begin{figure}[ ht!]
		\centering
		\label{fig1a}
\tikzset{every picture/.style={line width=0.75pt}} 
\begin{tikzpicture}[x=0.75pt,y=0.75pt,yscale=-1,xscale=1]
	\path (0,345);\path (492.77593994140625,0);
	\draw  [color={rgb, 255:red, 0; green, 0; blue, 0 }  ,draw opacity=1 ][fill={rgb, 255:red, 255; green, 255; blue, 255 }  ,fill opacity=1 ][dash pattern={on 4.5pt off 4.5pt}] (52.2,29.45) -- (136.5,1) -- (143.93,288.08) -- (59.63,316.53) -- cycle ;
	\draw  [color={rgb, 255:red, 0; green, 0; blue, 0 }  ,draw opacity=1 ][fill={rgb, 255:red, 255; green, 255; blue, 255 }  ,fill opacity=1 ][dash pattern={on 4.5pt off 4.5pt}] (204.76,41.85) -- (278.5,11.46) -- (290.87,305.95) -- (217.13,336.34) -- cycle ;
	\draw   (63.27,133.35) .. controls (69.83,120.14) and (117.41,115.35) .. (130.27,150.63) .. controls (143.13,185.91) and (120.65,180.48) .. (135.67,208.66) .. controls (150.7,236.83) and (74.46,241.77) .. (71.58,203.19) .. controls (68.69,164.62) and (56.71,146.56) .. (63.27,133.35) -- cycle ;
	\draw  [color={rgb, 255:red, 12; green, 20; blue, 217 }  ,draw opacity=1 ][fill={rgb, 255:red, 255; green, 255; blue, 255 }  ,fill opacity=1 ] (86.77,149.85) .. controls (106.77,139.85) and (113.82,135.19) .. (116.72,156.2) .. controls (119.62,177.22) and (129.15,190.76) .. (125.53,197.34) .. controls (121.91,203.92) and (103.23,227.56) .. (83.23,197.56) .. controls (63.23,167.56) and (66.77,159.85) .. (86.77,149.85) -- cycle ;
	\draw  [color={rgb, 255:red, 20; green, 54; blue, 186 }  ,draw opacity=0.88 ][fill={rgb, 255:red, 255; green, 255; blue, 255 }  ,fill opacity=1 ] (252.31,100.82) .. controls (274.55,102.14) and (274.64,133.74) .. (276.51,148.25) .. controls (278.37,162.76) and (283.61,185.98) .. (276.76,211.01) .. controls (269.9,236.03) and (256.89,240.12) .. (244.45,213.2) .. controls (232.02,186.27) and (230.07,99.51) .. (252.31,100.82) -- cycle ;
	\draw  [color={rgb, 255:red, 0; green, 0; blue, 0 }  ,draw opacity=1 ][fill={rgb, 255:red, 255; green, 255; blue, 255 }  ,fill opacity=1 ][dash pattern={on 4.5pt off 4.5pt}] (368.97,44.75) -- (448.54,20.51) -- (458.9,312.02) -- (379.34,336.27) -- cycle ;
	\draw  [color={rgb, 255:red, 21; green, 30; blue, 173 }  ,draw opacity=1 ][fill={rgb, 255:red, 255; green, 255; blue, 255 }  ,fill opacity=1 ] (400.22,189.09) .. controls (400.22,169.32) and (408.53,153.29) .. (418.78,153.29) .. controls (429.03,153.29) and (437.34,169.32) .. (437.34,189.09) .. controls (437.34,208.86) and (429.03,224.89) .. (418.78,224.89) .. controls (408.53,224.89) and (400.22,208.86) .. (400.22,189.09) -- cycle ;
	\draw [color={rgb, 255:red, 6; green, 79; blue, 164 }  ,draw opacity=1 ] [dash pattern={on 0.84pt off 2.51pt}]  (105.21,142.06) -- (260.95,99.99) ;
	\draw [color={rgb, 255:red, 3; green, 70; blue, 149 }  ,draw opacity=1 ] [dash pattern={on 0.84pt off 2.51pt}]  (115.07,207.61) -- (263.96,230.81) ;
	\draw [color={rgb, 255:red, 7; green, 80; blue, 166 }  ,draw opacity=1 ] [dash pattern={on 0.84pt off 2.51pt}]  (260.95,99.99) -- (415.52,153.29) ;
	\draw [color={rgb, 255:red, 7; green, 80; blue, 165 }  ,draw opacity=1 ] [dash pattern={on 0.84pt off 2.51pt}]  (263.96,230.81) -- (415.52,224.89) ;
	\draw  [color={rgb, 255:red, 126; green, 211; blue, 33 }  ,draw opacity=1 ]
    (251.23,44.7) -- (314.52,19.47) -- (326.56,311.64) -- (263.28,336.88) -- cycle ;
	\draw  [color={rgb, 255:red, 126; green, 211; blue, 33 }  ,draw opacity=1 ] (271.18,169.74) .. controls (266.89,131.3) and (271.88,97.65) .. (282.33,94.57) .. controls (292.77,91.49) and (304.71,120.14) .. (308.99,158.58) .. controls (313.27,197.01) and (308.28,230.66) .. (297.84,233.75) .. controls (287.4,236.83) and (275.46,208.17) .. (271.18,169.74) -- cycle ;
	\draw [color={rgb, 255:red, 5; green, 81; blue, 170 }  ,draw opacity=1 ] [dash pattern={on 0.84pt off 2.51pt}]  (110.08,175.07) -- (254.99,169.61) -- (411.28,188.55) ;
	\draw [color={rgb, 255:red, 0; green, 0; blue, 0 }  ,draw opacity=1 ][line width=1.5]  [dash pattern={on 1.69pt off 2.76pt}]  (263.68,167.89) -- (298.69,164.66) ;
	\draw  [color={rgb, 255:red, 0; green, 0; blue, 0 }  ][line width=1.5] [line join = round][line cap = round] (409.43,188.85) .. controls (409.6,189.02) and (409.81,188.52) .. (409.88,188.29) ;
	\draw  [color={rgb, 255:red, 0; green, 0; blue, 0 }  ][line width=1.5] [line join = round][line cap = round] (258,169.06) .. controls (258.18,168.97) and (258.37,168.88) .. (258.56,168.78) ;
	\draw (82.37,101.79) node [anchor=north west][inner sep=0.75pt]    {$C_{k}$};
	\draw (67.43,131.85) node [anchor=north west][inner sep=0.75pt]  [color={rgb, 255:red, 33; green, 42; blue, 88 }  ,opacity=1 ]  {$\textcolor[rgb]{0.03,0.23,0.47}{D_{k}}$};
	\draw (224,78.8) node [anchor=north west][inner sep=0.75pt]  [color={rgb, 255:red, 16; green, 37; blue, 62 }  ,opacity=1 ]  {$\textcolor[rgb]{0.04,0.25,0.49}{\Phi _{\alpha _{k}}^{r_{k}} (D_{k})}$};
	\draw (283.47,76.65) node [anchor=north west][inner sep=0.75pt]  [color={rgb, 255:red, 65; green, 117; blue, 5 }  ,opacity=1 ]  {$\textcolor[rgb]{0.49,0.83,0.13}{D}$};
	\draw (400.12,122.26) node [anchor=north west][inner sep=0.75pt]    {$\textcolor[rgb]{0.03,0.21,0.41}{\Phi _{\alpha _{k}}^{s_{k}} (D_{k})}$};
	\draw (85.16,160.02) node [anchor=north west][inner sep=0.75pt]    {$x_{k}$};
	\draw (234.29,143.58) node [anchor=north west][inner sep=0.75pt]  [rotate=-9.1,xslant=-0.17]  {$\phi _{ r_{k}}(x_{k})$};
	\draw (296.57,155.46) node [anchor=north west][inner sep=0.75pt]    {$x'$};
	\draw (407.74,175.6) node [anchor=north west][inner sep=0.75pt]  [rotate=-359.61]  {$\phi _{ s_{k}} (x_{k})$};
	\draw (85.44,322.9) node [anchor=north west][inner sep=0.75pt]    {$H( x_{k})$};
\end{tikzpicture}\\
\caption{$D=\lim\limits_{k\rightarrow \infty}\Phi_{\alpha_{k}}^{r_k}(x_k,D_k)$}
	\end{figure}
Continuity of $\phi$ ensures that
\begin{equation}\label{eq2.1}
		\lim_{k\rightarrow \infty}r_k=\lim_{k\rightarrow \infty}s_k-r_k=\infty.
	\end{equation}
Indeed, given $s>0$, continuity ensures the existence of $\delta_s > 0$ such that $d(x, y) < \delta_s$ imples $d(\phi_t(x), \phi_t(y)) \leq \frac{\varepsilon}{2}$ whenever $|t| \leq s$. If $\delta_{k}<\delta_s$, then
$$\d (C_k)<\delta_s \ \  \text{and} \ \  \d (\Phi_{\alpha_k}^{s_k}(C_k))<\delta_s,$$
and consequently 
$$\d(\Phi_{\alpha_k}^t(C_k))\leq \frac{\varepsilon}{2} \ \ \text{and} \ \ \d(\Phi_{\alpha_k}^{s_k-t}(C_k))\leq \frac{\varepsilon}{2}$$
for every $t \in [0, s]$. Since $\lim_{k\to+\infty}\delta_k=0$ and $\d(\Phi^{r_k}_{\alpha_k}(D_k))=\varepsilon$, it follows that $r_k > s$ and $s_k - r_k > s$ if $k$ is sufficiently large, and since $s$ is arbitrary, the statement is proved. Continuing the proof, let $D$ be an accumulation continuum of $(\Phi^{r_k}_{\alpha_k}(D_k))_{k\in\N}$ and note that $\diam(D)=\eps$ and $D\subset H(x')$, where $x'=\lim_{k\to+\infty}\phi_{r_k}(x_k)$.
As in the proof of Proposition \ref{3.08a}, we will prove the existence of $\beta \in \h(D)$ such that
    $$\d (\Phi_{\beta}^t(x',D))\leq \varepsilon \,\,\,\,\,\, \text{for every} \,\,\,\,\,\, t \in \mathbb{R}.$$
Since $D\subset H(x')$, it cannot be contained in a segment of orbit, so this contradicts cw-expansiveness.
To prove the existence of such $\beta$, let for each $k\in\N$:
    \begin{align*}
		\beta_k:\Phi^{r_k}_{\alpha_k}(x_k,D_k)&\rightarrow C^0( [-r_k,r_k]),\hspace{1.4cm}\\
		z_k &\mapsto  \beta_k(z_k)\colon [-r_k,r_k]\to \R\\
			&\phantom{\mapsto \beta_k(y_k)\colon [-r_k,r_k]} t  \to \beta_{k}(z_k)(t)=\alpha_k(y_k)(t+r_k)-\alpha_k(y_k)(r_k),
    \end{align*}
where $z_k = \phi_{\alpha_k(y_k)(r_k)}(y_k)$. For each $p \in D$, there exists a sequence $p_k \rightarrow p$ as $k \rightarrow \infty$ such that $p_k = \phi_{\alpha(y_k)(r_k)}(y_k) \in \Phi_{\alpha_{k}}^{r_k}(x_k,D_k)$ with $y_k\in D_k$. It follows from the definition of $\beta_k$ that $\phi_{\beta_k(p_k)(t)}(p_k)=\phi_{\alpha_{k}(y_k)(t+r_k)}(y_k)$ and, consequently,
$$\phi_{\beta_k(p_k)(t)}(p_k) \in H_{\varepsilon}(\phi_{t+r_k}(x_k)) \,\,\,\,\,\, \text{for every} \,\,\,\,\,\, k \geq 0 \,\,\,\,\,\, \text{and} \,\,\,\,\,\, t \in [-r_k, r_k].$$
For each $t\in\R$, there exists $r_{k_0}>0$ such that $t \in [-r_{k_0}, r_{k_0}]$
and according to Theorem \ref{1.01}, there exists a subsequence of $(\beta_k(p_k)|_{[-r_{k_0},r_{k_0}]})_{k\geq k_0} \subset C^0([-r_{k_0},r_{k_0}],\mathbb{R})$ and a $\beta(p) \in C^0([-r_{k_0},r_{k_0}])$ such that
    $$\beta_{k}(p_k)|_{[-r_{k_0},r_{k_0}]}\rightarrow \beta(p)|_{[-r_{k_0},r_{k_0}]}$$
and $\phi_{\beta(p)(t)}(p) \in H(\phi_t (x'))$. Therefore, we can ensure the existence of a continuum of reparameterizations $\beta \in \h(D)$ such that
    \begin{align*}
		\d (\Phi_{\beta}^t(D))&= \lim_{k\rightarrow \infty }\d (\Phi_{\beta_k}^t(\Phi_{\alpha_k}^{r_k}(D_k)))\\
		&=\lim_{k\rightarrow \infty }\d (\Phi_{\alpha_k}^{t+r_k}(D_k))\\
		&\leq \varepsilon.
	\end{align*}
The last inequality follows from the following fact: From (\ref{eq2.1}), for each $t\in\R$, there exists $k_0>0$ such that $0<t+r_{k}< s_{k} $ for every $k\geq k_0$. Thus, for every $k\geq k_0$, we have $\d (\Phi^{t+r_k}_{\alpha_k}(D_k))\leq \varepsilon$ and, consequently, $\lim_{k\rightarrow \infty }\d (\Phi_{\alpha_k}^{t+r_k}(D_k))\leq \varepsilon$. 
\end{proof}

\begin{remark}
In the proof, the sectional flow is important to ensure that $D$ is not contained in a segment of orbit. This is, indeed, one of the reasons that we consider the use of transversal sections to study local stable/unstable continua of cw-expansive flows.
\end{remark}

\begin{corollary}
Let $(X,\phi)$, $\varepsilon$, and $\delta$ be as in the proposition \ref{lema3.03}. Then for any $t>0$, there exists a real number $k=k(t)>0$ such that if $A\subset X$ is a continuum with $x\in A$ and $\d (A)\geq t$, then $\d (\Phi^s_\alpha (A))\geq \delta$ or $\d (\Phi^{-s}_\alpha (A))\geq \delta$ for all $s\geq k$ and all $\alpha\in \mathcal{H}(A)$.
\end{corollary}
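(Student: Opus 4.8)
The plan is to argue by contradiction and to reduce, after a compactness argument, to the configuration already shown impossible at the end of the proof of Proposition \ref{lema3.03}; throughout, $\varepsilon$ and $\delta$ are as in that proposition, $\varepsilon$ being small enough that the concluding step of that proof is available. Suppose the statement fails for some $t>0$. Then for every $k\in\N$ there are a continuum $A_k$, a map $\alpha_k\in\h(A_k)$ with base point $x_k:=x_{\alpha_k}\in A_k$ (so $A_k\subset H(x_k)$), and a real number $s_k\geq k$ for which the sectional flow $\Phi^{\sigma}_{\alpha_k}(x_k,A_k)$ is defined on $[-s_k,s_k]$, $\d(A_k)\geq t$, and both $\d(\Phi^{s_k}_{\alpha_k}(A_k))<\delta$ and $\d(\Phi^{-s_k}_{\alpha_k}(A_k))<\delta$ hold; note $s_k\to\infty$.

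First I would pass to a subcontinuum of controlled diameter. Put $\rho:=\min\{t,\delta\}>0$. Exactly as in the proof of Proposition \ref{lema3.03}, choose an order arc $c_k\colon[0,1]\to\mathcal{C}(X)$ from $\{x_k\}$ to $A_k$ (so $c_k(a)\subset c_k(b)$ for $a\leq b$); since $a\mapsto\d(c_k(a))$ is continuous and runs from $0$ to $\d(A_k)\geq\rho$, there is $a_k$ with $\d(c_k(a_k))=\rho$, and I set $B_k:=c_k(a_k)$. Then $x_k\in B_k\subset A_k$, $\d(B_k)=\rho\leq\delta$, $\alpha_k|_{B_k}\in\h(B_k)$, the sectional flow $\Phi^{\sigma}_{\alpha_k}(x_k,B_k)$ is defined on $[-s_k,s_k]$, and $\d(\Phi^{\pm s_k}_{\alpha_k}(B_k))\leq\d(\Phi^{\pm s_k}_{\alpha_k}(A_k))<\delta$.

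Next I would apply Proposition \ref{lema3.03} in both time directions. From $\d(B_k)\leq\delta$ and $\d(\Phi^{s_k}_{\alpha_k}(x_k,B_k))<\delta$ with $s_k>0$ it gives $\d(\Phi^{\sigma}_{\alpha_k}(x_k,B_k))<\varepsilon$ for all $\sigma\in[0,s_k]$; since cw-expansiveness is unchanged under $t\mapsto -t$, its time-reversed version applies to $\d(\Phi^{-s_k}_{\alpha_k}(x_k,B_k))<\delta$ and gives $\d(\Phi^{\sigma}_{\alpha_k}(x_k,B_k))<\varepsilon$ for all $\sigma\in[-s_k,0]$. Hence $\d(\Phi^{\sigma}_{\alpha_k}(x_k,B_k))<\varepsilon$ for every $\sigma\in[-s_k,s_k]$. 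Now I would pass to the limit as in the proofs of Propositions \ref{3.08a} and \ref{lema3.03}: since the Peano continuum $X$ is compact, so is $\mathcal{C}(X)$, and along a subsequence $x_k\to x'$ and $B_k\to B$ in $\mathcal{C}(X)$ with $\d(B)=\rho>0$ and $B\subset H(x')$ (a limit of continua lying in sections lies in a section, by Theorem \ref{1.01} applied with constant reparametrizations). Applying Theorem \ref{1.01} inductively on the intervals $[-n,n]$ together with a diagonal argument over the points of $B$, one obtains $\beta\in\h(B)$ with base point $x'$ such that $\phi_{\beta(p)(\sigma)}(p)\in H(\phi_{\sigma}(x'))$ for all $p\in B$ and $\sigma\in\R$, and $\d(\mathcal{X}^{\sigma}_{\beta}(B))=\lim_{k\to\infty}\d(\Phi^{\sigma}_{\alpha_k}(B_k))\leq\varepsilon$ for every $\sigma\in\R$ (for fixed $\sigma$, eventually $\sigma\in[-s_k,s_k]$). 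Thus $B$ is a non-degenerate continuum contained in the transversal section $H(x')$ admitting $\beta\in\h(B)$ with $\d(\Phi^{\sigma}_{\beta}(x',B))\leq\varepsilon$ for all $\sigma\in\R$ — precisely the configuration ruled out at the end of the proof of Proposition \ref{lema3.03}, which contradicts cw-expansiveness. This contradiction furnishes the required $k=k(t)$.

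I expect the main obstacle to be the limiting step in the third paragraph — building $\beta\in\h(B)$ from the $\alpha_k|_{B_k}$ and checking $\d(\mathcal{X}^{\sigma}_{\beta}(B))\leq\varepsilon$ for all $\sigma$ — but this only reproduces the limiting arguments already carried out for Propositions \ref{3.08a} and \ref{lema3.03}. The genuinely new ingredients are slight: the choice of the subcontinuum $B_k$ of diameter $\rho=\min\{t,\delta\}$, which both makes Proposition \ref{lema3.03} applicable ($\d(B_k)\leq\delta$) and keeps the limit non-degenerate ($\d(B_k)$ bounded away from $0$), and the observation that Proposition \ref{lema3.03} may be used forward and backward in time.
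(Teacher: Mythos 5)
The paper states this corollary without proof (it is presented as an immediate consequence of Proposition \ref{lema3.03}), so there is no argument of the authors to compare against; what you propose is essentially the standard derivation of Kato's corresponding corollary for cw-expansive homeomorphisms, transported to the sectional flow, and it is sound at the same level of rigour as the paper's own limit arguments in Propositions \ref{3.08a}, \ref{lema3.03} and \ref{3.09} (order arc, Theorem \ref{1.01} on the intervals $[-n,n]$, Hausdorff limit of continua inside sections). One step deserves repair: your backward-time application of Proposition \ref{lema3.03} tacitly uses the time-reversed proposition \emph{with the same constant} $\delta$, whereas the reversed flow a priori comes with its own constant; either remark that $\delta$ may be shrunk so as to work in both time directions (any smaller constant still satisfies the proposition), or, cleaner, apply the forward proposition once to the shifted continuum $C_k=\mathcal{X}^{-s_k}_{\alpha_k}(A_k)$, whose diameter is already $<\delta$ by hypothesis, over the base-time interval $[0,2s_k]$ with the shifted reparametrizations used in the proofs of Propositions \ref{lema3.03} and \ref{3.09}. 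That single application gives $\d(\mathcal{X}^{\sigma}_{\alpha_k}(A_k))<\varepsilon$ for all $\sigma\in[-s_k,s_k]$, keeps $\delta$ exactly as in the proposition, and makes the order-arc subcontinuum $B_k$ unnecessary, since one may then pass to a limit of the $A_k$ themselves, of diameter at least $t$. Your remaining caveats are correctly identified and shared by the paper: the final contradiction needs $\varepsilon$ no larger than a cw-expansiveness constant associated to $\tau$ (so that uniformly small sectional diameters force the limit continuum into an orbit segment, which meets the section only at its base point), and the use of Theorem \ref{1.01} relies on the standing convention that the reparametrizations fix $0$ at time $0$, exactly as in the paper's own proofs.
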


In the next proposition, we prove that (sectionally geometric) local stable continua are stable (see \cite[Proposition 2.1]{Kato93} for the case of cw-expansive homeomorphisms).

\begin{proposition}\label{3.09}
If $(X,\phi)$ is a cw-expansive flow with a cw-expansive constant $\eps>0$ and $C\in\mathcal{T}^{s}_\varepsilon$, 
then $\lim\limits_{t\rightarrow\infty}\d (\Phi_{\alpha}^t(C))=0$.
\end{proposition}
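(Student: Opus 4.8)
The plan is to argue by contradiction, following the classical Kato argument for cw-expansive homeomorphisms (\cite{Kato93}*{Proposition 2.1}) but carried out in the transversal sections via the sectional flow. Suppose $C\in\mathcal{T}^s_\varepsilon$ but $\limsup_{t\to\infty}\d(\Phi_\alpha^t(C))=2\eta>0$ for the reparametrization $\alpha\in\mathcal H(C)^+$ witnessing membership in $\mathcal{T}^s_\varepsilon$. Apply Proposition \ref{lema3.03}: for this $\eta$ choose $\delta>0$; since $\d(\Phi_\alpha^t(C))$ cannot tend to $0$ and does not exceed $\varepsilon$, one can extract an increasing sequence $s_k\to\infty$ with $\d(\Phi_\alpha^{s_k}(C))\geq\delta$ yet we want to contradict cw-expansiveness directly. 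The cleaner route: from $C\in\mathcal{T}^s_\varepsilon$ we have $\Phi_\alpha^t(C)\subset H_\varepsilon(\phi_t(x_\alpha))$ for all $t\ge0$, and if $\d(\Phi_\alpha^t(C))\not\to0$ then there is $\eta>0$ and $t_k\to\infty$ with $\d(\Phi_\alpha^{t_k}(C))\ge\eta$.

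Next I would ``recenter'' the dynamics at the times $t_k$, exactly as in the proof of Proposition \ref{lema3.03}. Set $x_k=\phi_{t_k}(x_\alpha)$, let $D_k=\Phi_\alpha^{t_k}(C)\subset H(x_k)$, which is a continuum with $x_k\in D_k$, $\d(D_k)\ge\eta$, and $\d(D_k)\le\varepsilon$, and define the shifted reparametrization field $\beta_k(z)(t)=\alpha(y)(t+t_k)-\alpha(y)(t_k)$ where $z=\phi_{\alpha(y)(t_k)}(y)$, $y\in C$, so that $\beta_k\in\mathcal H(D_k)$ and $\Phi_{\beta_k}^t(D_k)=\Phi_\alpha^{t+t_k}(C)$ for all $t\ge -t_k$. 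Passing to a subsequence, $x_k\to x'$, $D_k\to D$ in the Hausdorff metric with $D$ a continuum (limits of continua are continua), $\d(D)\ge\eta>0$, $D\subset H(x')$, and $x'\in D$. Then apply Theorem \ref{1.01} inductively on the intervals $[-n,n]$ to the sequences $\beta_k(p_k)$ (for $p_k\to p\in D$, $p_k\in D_k$) to extract in the limit a continuum of reparametrizations $\beta\in\mathcal H(D)$ with $\phi_{\beta(p)(t)}(p)\in H(\phi_t(x'))$ for every $t\in\mathbb R$ and every $p\in D$, hence $\Phi_\beta^t(D)\subset H_\varepsilon(\phi_t(x'))$ for all $t\ge0$ (forward times, from $C\in\mathcal T^s_\varepsilon$) — this part is literally the argument already written out in Proposition \ref{lema3.03}.

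The remaining point is to get the backward-time control so as to contradict cw-expansiveness via Proposition \ref{pro3.06} (i.e.\ to show $D\in\mathcal T^s_\delta\cap\mathcal T^u_\delta$ for a suitable $\delta$, or more directly that $\d(\Phi_\beta^t(D))\le\varepsilon$ for all $t\in\mathbb R$). For $t<0$: for large $k$ one has $t+t_k>0$, so $\Phi_{\beta_k}^t(D_k)=\Phi_\alpha^{t+t_k}(C)\subset H_\varepsilon(\phi_{t+t_k}(x_\alpha))$ and in particular $\d(\Phi_{\beta_k}^t(D_k))\le\varepsilon$; letting $k\to\infty$ gives $\d(\Phi_\beta^t(D))\le\varepsilon$ for every $t<0$ as well. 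Combined with the forward estimate we obtain $\Phi_\beta^t(D)\subset H_\varepsilon(\phi_t(x'))$ for all $t\in\mathbb R$, so $D\in\mathcal T^s_\varepsilon\cap\mathcal T^u_\varepsilon$ with $\d(D)\ge\eta>0$; taking $\varepsilon$ smaller than the $\delta$ furnished by Proposition \ref{pro3.06} (which we may, shrinking the original cw-expansive constant) yields that $D$ is degenerate, contradicting $\d(D)\ge\eta$. Hence $\d(\Phi_\alpha^t(C))\to0$.

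The main obstacle — and the only genuinely delicate point — is the construction of the limiting reparametrization field $\beta\in\mathcal H(D)$: one must guarantee that the $C^0$-limits $\beta(p)$ obtained for different $p\in D$ fit together into a single element of $\mathcal H(D)$ (continuity in the point $p$, and the existence of a base point $p$ with $\beta(p)=\mathrm{id}$). This is handled by a diagonal extraction combined with Theorem \ref{1.01}, exactly as in Proposition \ref{lema3.03}; here the hypothesis that everything lives in the transversal sections $H(\cdot)$ is essential, both to apply Theorem \ref{1.01} and to conclude, at the end, that a continuum $D\subset H(x')$ cannot be degenerate unless it is a single point (it cannot be an orbit segment). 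Everything else is bookkeeping with reparametrizations and the Hausdorff metric.
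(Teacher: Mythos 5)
Your argument is correct and follows essentially the same route as the paper: pass to a Hausdorff limit $D$ of $\Phi_\alpha^{t_k}(C)$ along times $t_k\to\infty$, shift the reparametrizations by $t_k$, use Theorem \ref{1.01} (as in the proof of Proposition \ref{lema3.03}) to produce $\beta\in\mathcal H(D)$ with $\d(\Phi_\beta^t(D))\le\varepsilon$ for all $t\in\mathbb R$, and contradict Proposition \ref{pro3.06}. The only cosmetic difference is that you spell out the backward-time estimate ($t+t_k>0$ for large $k$) and remark on possibly shrinking the constant, both of which the paper handles implicitly since $\varepsilon$ is already assumed to be a cw-expansive constant.
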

\begin{proof}
If $C\in\mathcal{T}^{s}_\varepsilon$, then $\d (\Phi_{\alpha}^t(C))\leq \varepsilon$ for every $t\geq 0$. Suppose, by contradition, that there exists $\delta>0$ and a sequence $t_k\rightarrow\infty$ as $k\rightarrow \infty$ such that 
$$\d (\Phi^{t_k}_{\alpha}(C))\geq \delta \,\,\,\,\,\, \text{for every} \,\,\,\,\,\, k\in\N.$$ Assume by taking a subsequence if necessary, that there exists $D=\lim\limits_{k\rightarrow\infty}\Phi^{t_k}_{\alpha}(C)$ and note that $\delta\leq\d (D)\leq \varepsilon$. We will prove the existence of $\beta \in \mathcal{H}(D)$ such that 
$$\d(\Phi^t_{\beta}(D))\leq \varepsilon \quad\text{for every}\quad t\in \mathbb{R}.$$ In fact, each $y\in D$ is the limit of $y'_k=\phi_{\alpha(y_k)(t_k)}(y_k)$ with $y_k\in C$. For each $k\in\N$, define
    	\begin{align*}
		\beta_k(y'_k): [-t_k,t_k]&\rightarrow \R\\
		t&\mapsto \beta_k(y_k')(t)=\alpha(y_k)(t+t_k)-\alpha(y_k)(t_k).
  \end{align*}
and note that $\beta_k(y_k')\in \text{Rep}([-t_k,t_k])$ and $\phi_{\beta_k(y_k')(t)}(y_k')=\phi_{\alpha(y_k)(t+t_k)}(y_k)$. As in the proof of Proposition \ref{lema3.03}, we can use Theorem \ref{1.01}
to ensure the existence of $\beta\in \mathcal{H}(D)$ such that
  	\begin{align*}
		\d (\Phi_{\beta}^t(D))&= \lim_{k\rightarrow \infty }\d (\Phi_{\beta_k}^t(\phi_{\alpha}^{t_k}(D_k)))\\
		&=\lim_{k\rightarrow \infty }\d (\Phi_{\alpha}^{t+t_k}(D_k))\\
		&\leq \varepsilon.
	\end{align*}
Thus, our assertion is verified and we conclude that $D\in \mathcal{T}^s_\varepsilon\cap \mathcal{T}_{\varepsilon}^u$, which contradicts Proposition \ref{pro3.06}.
\end{proof}

\section{Existence of stable and unstable continua at each point}


In this section, we will prove the existence of continua contained within each sectionally geometric stable/unstable sets $T^s_{\eps}(x)$ and $T^u_{\eps}(x)$ for cw-expansive flows $(X,\phi)$ defined on a Peano continuum. In the case the space $X$ is a Peano continuum, additional properties on the field of transversal sections were obtained in \cite{Aa} and we state them as follows.
\begin{theorem}[\cite{Aa}*{Theorem 2.53}]\label{1.02}
    If $(X, \phi)$ is a regular flow defined on a Peano continuum $X$, then there exists a symmetric monotone field of transversal sections $H^{\prime}$ and a continuous one-parameter family of fields $H: [0, r_0] \times X \rightarrow \mathcal{C}(X)$ satisfying:
\begin{enumerate}
\item $H_{\varepsilon}: X \rightarrow \mathcal{C}(X)$ is a locally symmetric, monotone, and continuous field of transversal sections for every $\varepsilon \in (0, r_0]$,
\item $H_{\varepsilon} \subset H^{\prime}$ for every $\varepsilon \in [0, r_0]$,
\item $H_{0}(x) = {x}$ for every $x \in X$,
\item If $0 \leq \varepsilon \leq \varepsilon^{\prime} \leq r_0$, then $H_{\varepsilon} \subset H_{\varepsilon^{\prime}}$.
\end{enumerate}
\end{theorem}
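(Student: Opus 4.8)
The plan is to take for $H^{\prime}$ the symmetric monotone field of transversal sections whose existence is the theorem of \cite{Aa} recalled in Section 2, and to build the shrinking one-parameter family by cutting $H^{\prime}$ with metric balls. The obstacle to doing this naively is that a transversal section $H^{\prime}(x)$ need be neither connected nor locally connected, so $H^{\prime}(x)\cap\overline{B_{\varepsilon}(x)}$ could be a wild set that neither varies continuously with $(\varepsilon,x)$ nor shrinks sensibly to $\{x\}$; this is exactly where the Peano hypothesis enters. Since $X$ is a Peano continuum, the Bing--Moise theorem lets us assume, after replacing $d$ by an equivalent metric, that $d$ is \emph{convex}, hence geodesic; then each closed ball $\overline{B_{\varepsilon}(x)}$ is a subcontinuum, the assignment $(\varepsilon,x)\mapsto\overline{B_{\varepsilon}(x)}$ is continuous into $\mathcal{C}(X)$ (indeed $1$-Lipschitz in each variable), and $\overline{B_{\varepsilon}(x)}\to\{x\}$ as $\varepsilon\to 0$. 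Moreover, in reconstructing $H^{\prime}$ from flow boxes of $X$ one may arrange that every section $H^{\prime}(x)$ is itself locally connected, since its slices are homeomorphic to open subsets of the Peano continuum $X$. Fix $r_0>0$ smaller than a box radius of $H^{\prime}$, and define $H_{\varepsilon}(x)$, for $\varepsilon\in(0,r_0]$, to be the connected component of $x$ in $H^{\prime}(x)\cap\overline{B_{\varepsilon}(x)}$, and $H_{0}(x)=\{x\}$.

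Granting this, properties (2), (3), (4) are immediate: (2) because $H_{\varepsilon}(x)\subset H^{\prime}(x)$ and $x\in H^{\prime}(x)$; (3) because $\bigcap_{\varepsilon>0}H_{\varepsilon}(x)\subset\bigcap_{\varepsilon>0}\overline{B_{\varepsilon}(x)}=\{x\}$; (4) because the component of $x$ in a set lies inside the component of $x$ in any larger set. For (1) one verifies the field axioms. Compactness of $H_{\varepsilon}(x)$ is clear; injectivity of $\phi$ on $[-\tau,\tau]\times H_{\varepsilon}(x)$ and the condition $H_{\varepsilon}(x)\cap\phi_{[-\tau^{\prime},\tau^{\prime}]}(y)=\{y\}$ are inherited verbatim from $H^{\prime}$; monotonicity is inherited since $H_{\varepsilon}(x)\cap H_{\varepsilon}(\phi_{t}x)\subset H^{\prime}(x)\cap H^{\prime}(\phi_{t}x)=\emptyset$ for small $t$; and local symmetry of $H_{\varepsilon}$ follows from the global symmetry of $H^{\prime}$ together with the local connectedness of the sections, by a short component-matching argument. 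The one substantive axiom is the covering condition: for each $\varepsilon\in(0,r_0]$ there must be $\gamma(\varepsilon)>0$ with $B_{\gamma(\varepsilon)}(x)\subset\phi_{[-\tau^{\prime},\tau^{\prime}]}(H_{\varepsilon}(x))$ for all $x$. One starts from the box property of $H^{\prime}$ and Proposition \ref{1.06} to flow a small ball into $H^{\prime}(x)$ through short times; intersecting with $\overline{B_{\varepsilon}(x)}$ and using that the flow-box slice through $x$ is $\phi$-homeomorphic to a ball and that the metric is convex, the part of that slice inside $\overline{B_{\varepsilon}(x)}$ stays connected, hence lies in the single component $H_{\varepsilon}(x)$; this shows a full smaller ball flows into $H_{\varepsilon}(x)$.

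The heart of the matter, and the step I expect to be hardest, is the joint continuity of $(\varepsilon,x)\mapsto H_{\varepsilon}(x)$ in the Hausdorff metric. Upper semicontinuity is easy: $H_{\varepsilon}(x)\subset H^{\prime}(x)\cap\overline{B_{\varepsilon}(x)}$, and the right side is upper semicontinuous in $(\varepsilon,x)$ since $H^{\prime}$ is continuous and the balls vary continuously, so nothing escapes in the limit. Lower semicontinuity is the crux: given $y\in H_{\varepsilon}(x)$ and $(\varepsilon_{n},x_{n})\to(\varepsilon,x)$ one must produce $y_{n}\in H_{\varepsilon_{n}}(x_{n})$ with $y_{n}\to y$, i.e.\ the component of $x_{n}$ must not abruptly collapse. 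Because $H^{\prime}(x)$ is locally connected, one joins $x$ to $y$ by an arc $A$ inside $H^{\prime}(x)\cap\overline{B_{\varepsilon-\rho}(x)}$ for a small $\rho>0$; continuity of $H^{\prime}$ and of the balls then lets one shadow $A$ by a connected subset of $H^{\prime}(x_{n})\cap\overline{B_{\varepsilon_{n}}(x_{n})}$ containing $x_{n}$ and reaching within $o(1)$ of $y$. Making this precise amounts to proving that the ``component of a marked point'' map is lower semicontinuous on the family $\{H^{\prime}(x)\cap\overline{B_{\varepsilon}(x)}\}$, which rests on a \emph{uniform} local connectedness of $X$ — valid since $X$ is a compact Peano continuum — transported to the sections. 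I would thus organize the proof as: (i) a uniform local connectedness lemma for $X$, hence for each $H^{\prime}(x)$; (ii) a Hausdorff lower-semicontinuity lemma for the marked-component map under that uniform local connectedness; (iii) assembling (i)--(ii) with the axioms checked above to conclude (1) and the continuity of the family. Steps (i)--(iii) are precisely the content of \cite{Aa}*{Theorem 2.53}, and (ii) is where essentially all the work lies.
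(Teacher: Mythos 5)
You should first note that the paper contains no proof of this statement at all: it is imported verbatim as \cite{Aa}*{Theorem 2.53}, so there is no internal argument to compare yours against, and a complete proof would in effect be a reconstruction of that result of Artigue. Judged on its own terms, your sketch is a reasonable outline in the expected spirit (convexify the metric by Bing--Moise, cut a fixed symmetric monotone field $H'$ with closed balls, take the marked component), and properties (2)--(4) and the covering axiom do come out essentially as you say. But the proposal has genuine gaps precisely at the points you would need to make it a proof. First, your justification for local connectedness of the sections is wrong as stated: $H'(x)$ is not ``homeomorphic to an open subset of the Peano continuum $X$'' --- the flow box $\phi_{[-\tau,\tau]}(H'(x))$ is generally not open and $H'(x)$ is not an open slice of $X$. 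What one can actually extract is that $\pi_x(B_\gamma(x))$ is a continuous image of a ball (a Peano continuum, by Hahn--Mazurkiewicz, once the metric is convex) and is a neighborhood of $x$ in $H'(x)$; upgrading this to local connectedness of the sections at all points, uniformly in $x$, is exactly the kind of work the cited theorem does, and you assume it rather than prove it. Second, the lower semicontinuity of the marked-component map --- which you yourself identify as ``where essentially all the work lies'' --- is only gestured at via an unproved ``uniform local connectedness transported to the sections'' lemma; components of $H'(x)\cap \overline{B_\varepsilon(x)}$ can a priori collapse under perturbation of $(\varepsilon,x)$, and nothing in the sketch rules this out.

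Third, the claim that local symmetry of $H_\varepsilon$ follows from symmetry of $H'$ ``by a short component-matching argument'' is not innocuous: $y\in H_\varepsilon(x)$ gives $x\in H'(y)\cap \overline{B_\varepsilon(y)}$ by symmetry of $H'$ and the triangle inequality, but there is no evident reason why $x$ should lie in the \emph{component of $y$} in that set; connecting $x$ to $y$ inside $H'(y)$ is a separate argument, again hinging on the connectivity structure of the sections. So the proposal is best described as a plausible plan whose three supporting pillars --- local connectedness of the sections, lower semicontinuity of the marked component, and local symmetry of the truncated field --- are exactly the unproved steps; since the paper itself only cites \cite{Aa} for them, the attempt cannot be counted as a proof, nor can it be certified as following the same route as Artigue's construction without checking that source.
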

The following definition contains the definition of stable point and asymptotically stable point using the field of transversal sections, as described in \cite[Definition 3.6]{Aa}.
\begin{definition}[Stable and asymptotically stable point]
A point $x\in X$ is \textit{stable} if for each $\varepsilon>0$, there exists $\delta>0$ such that $H_\delta(x)\subset T^{s}_\varepsilon(x)$. We say that $x\in X$ is \textit{asymptotically stable} if for every $\varepsilon>0$, there exists $\delta>0$ such that if $y\in H_\delta(x)$ then there exists $\alpha\in Rep(\mathbb{R})$ such that $\phi_{\alpha(t)}(y)\in H_{\varepsilon}(\phi_t(x))$ for every $t\geq 0$ and $d(\phi_{\alpha(t)}(y),\phi_t(x))\rightarrow 0$ as $t\rightarrow +\infty$. 
\end{definition}
In the following propositions, we adapt techniques of Rodríguez-Hertz in \cite{JH} to the context of cw-expansive flows using the field of transversal sections.

\begin{proposition}\label{stablesection}
Let $(X,\phi)$ be a cw-expansive flow on a Peano continuum. If $x\in X$ is a stable point, then there exists $\delta>0$ such that any $y\in H_{\delta}(x)$ is also a stable point.
\end{proposition}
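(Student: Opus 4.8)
The plan is to exploit uniform transversal cw-expansiveness (Proposition \ref{3.10a}) and the stability of the given point $x$ in order to transfer stability to a whole section $H_\delta(x)$ around it. The heuristic is: if $y\in H_\delta(x)$ is not stable, then arbitrarily small continua through $y$ in $H_\varepsilon(y)$ grow to a definite size under the forward sectional flow; but the forward sectional flow of a small continuum through $y$ stays near the forward orbit of $x$ (because $y$ is close to $x$ and $x$ is stable, so $H_{\delta'}(x)\subset T^s_{\varepsilon'}(x)$), which should force the grown continuum to lie inside a section of a point on the orbit of $x$ with uniformly small diameter --- a contradiction with the growth. I would make this precise using Proposition \ref{lema3.03} (a continuum that grows to size $\varepsilon$ cannot have later shrunk much) together with Corollary \ref{3.09a} (uniform contraction of $CT^s_{\varepsilon_0/2}(x)$).

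\textbf{Key steps.} First, fix the cw-expansiveness constant $\varepsilon_0$ (associated to $\tau$) and let $\delta_0>0$ be the constant given by Proposition \ref{lema3.03} for $\varepsilon=\varepsilon_0$, so that a continuum of diameter $\le\delta_0$ whose sectional flow returns to diameter $\le\delta_0$ at some time $s$ never exceeds diameter $\varepsilon_0$ on $[0,s]$. Second, use stability of $x$: choose $\delta_1>0$ with $H_{\delta_1}(x)\subset T^s_{\varepsilon_0/4}(x)$, and then by continuity of the field of sections (Proposition \ref{1.06}) and of the flow choose $\delta>0$ so small that for $y\in H_\delta(x)$ any continuum $C\subset H_\eta(y)$ with $\eta$ small satisfies $C\subset H_{\delta_1'}(x)$ for a suitable $\delta_1'$, hence $C$ (or rather its projection, but $C$ already lies in a section) is contained in $T^s_{\varepsilon_0/4}(x)$. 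Third, apply Corollary \ref{3.09a}: the forward sectional flow of such a $C$ satisfies $\Phi_\alpha^t(C)\subset B_{\varepsilon}(\phi_t(x))$ for $t\ge t_\varepsilon$, so $\d(\Phi_\alpha^t(C))\to 0$; in particular $\d(\Phi_\alpha^t(C))\le\delta_0$ for all large $t$. Fourth, assume $y$ is not stable: then for some $\varepsilon^\ast>0$ and arbitrarily small $\eta$ there is a continuum $C_\eta\subset H_\eta(y)$, $\alpha_\eta\in\h(C_\eta)$, with $\d(\Phi_{\alpha_\eta}^{s_\eta}(C_\eta))\ge\varepsilon^\ast$ for some $s_\eta\ge 0$ --- but by Step 3 this $C_\eta$ still flows into the stable set of $x$ and eventually has diameter $\le\delta_0$, and $\d(C_\eta)\le\eta\le\delta_0$, so Proposition \ref{lema3.03} applied on $[0,s']$ for $s'$ large bounds $\d(\Phi_{\alpha_\eta}^t(C_\eta))<\varepsilon_0$ throughout; this does not immediately contradict $\varepsilon^\ast$, so the correct reading is to take $\varepsilon^\ast<\delta_0$ and run Proposition \ref{lema3.03} to see the continuum never drops below a fixed size once it has grown, contradicting $\d(\Phi_{\alpha_\eta}^t(C_\eta))\to 0$ from Step 3. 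Finally, conclude that every $y\in H_\delta(x)$ is stable.

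\textbf{Main obstacle.} The delicate point is matching reparametrizations: the continuum $C$ through $y$ carries its own family $\alpha\in\h(C)$, while stability of $x$ provides a reparametrization comparing $y$'s orbit to $x$'s orbit, and these need to be reconciled so that $C$ genuinely lies in a single section $T^s_\bullet(x)$ with a \emph{continuum} of reparametrizations (an element of $\h(C)$), not just pointwise ones. This is exactly the kind of gluing handled by Theorem \ref{1.01} and by the construction in Proposition \ref{3.02}; I would invoke them to produce the needed $\alpha\in\h(C)$ realizing $C\subset T^s_{\varepsilon_0/4}(x)$, and then the contraction in Corollary \ref{3.09a} applies verbatim. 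The rest is bookkeeping with the constants $\delta_0,\delta_1,\delta$ and the times $t_\varepsilon$, and I expect no further essential difficulty.
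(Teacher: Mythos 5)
Your plan has a genuine gap at its key step. In Steps 2--3 you assert that a small continuum $C\subset H_\eta(y)$ is contained in $T^s_{\varepsilon_0/4}(x)$ (and in the ``Main obstacle'' you propose to produce $\alpha\in\h(C)$ realizing exactly this). That containment cannot hold as stated: since every $\alpha\in Rep(\R)^+$ fixes $0$, the $t=0$ condition in the definition of the sectionally geometric stable set forces $T^s_\varepsilon(x)\subset H_\varepsilon(x)$, whereas your $C$ lies in the section of $y$, not of $x$. To use the stability of $x$ one must first push $C$ along the flow into $H_\delta(x)$ by a map $z\mapsto\phi_{\mu(z)}(z)$ with $|\mu(z)|$ small, transfer the contracting family of reparametrizations on $H_\delta(x)$ (obtained from Proposition \ref{3.09}) back to $C$, control the error by the triangle inequality, renormalize the family so that $y$ itself carries the identity (otherwise one does not obtain an element of $\h$ anchored at $y$, hence no conclusion about $T^s_\bullet(y)$), and finally apply Proposition \ref{3.02} to return to the sections $H_\varepsilon(\phi_t(y))$ along the orbit of $y$. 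This transfer, normalization, and re-sectioning is precisely the paper's proof; it is not bookkeeping, it is the entire content, and your outline defers it.

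The contradiction scheme itself also does not close. Stability of $y$ is a ``for every $\varepsilon$'' statement, so its negation only provides some $\varepsilon^\ast>0$, possibly far smaller than the fixed thresholds at which your tools operate ($\delta_0$ from Proposition \ref{lema3.03}, $\varepsilon_0/2$ in Corollary \ref{3.09a}); you cannot ``take $\varepsilon^\ast<\delta_0$''. Moreover, a point $z\in H_\eta(y)$ with $z\notin T^s_{\varepsilon^\ast}(y)$ does not directly yield a continuum with a globally defined sectional flow along $y$'s orbit whose diameter grows to $\varepsilon^\ast$: one must construct that sectional flow as far as it is defined and show that if all diameters stay small for all forward time then stability actually holds --- which is again the same reparametrization transfer as above. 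Finally, your contraction (Step 3) is measured along the sections of $x$'s orbit while your growth (Step 4) is measured along the sections of $y$'s orbit, and comparing the two requires the same projection estimates. Note that the paper's argument is direct, not by contradiction, and uses neither Proposition \ref{lema3.03} nor Corollary \ref{3.09a}: it uses Theorem \ref{1.02} to get connected small sections, Proposition \ref{3.09} for diameter decay on $H_\delta(x)$, uniform continuity of the flow, and Proposition \ref{3.02} twice. Your route could be completed only by importing those same constructions, at which point the contradiction framing adds nothing.
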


\begin{proof}
Let $\eps\in(0,\frac{\eps_0}{2})$, where $\eps_0$ is a cw-expansive constant associated to $\tau$. Theorem $\ref{1.02}$ ensures the existence of $\delta\in(0,r_0)$ such that $H_\delta(y)$ is a continuum for every $y \in X$. This and the stability of $x$ ensure that $H_\delta(x) \subset CT_\varepsilon^s(x)$ (reducing $\delta$ if necessary) and since $CT_\varepsilon^s(x)\in\mathcal{T}^s_{\eps}$, Proposition \ref{3.09} ensures the existence of $\alpha \in \h_{\varepsilon}(H_\delta(x))$ such that
\begin{equation}\label{eq5a.1}
    \lim _{s \rightarrow \infty}\diam(\Phi_{\alpha}^s(H_\delta(x)))=0.
\end{equation}
Let $y \in H_{\delta}(x)$, $\varepsilon > 0$, and $\varepsilon_1 > 0$ obtained from Proposition $\ref{3.02}$. According to $(\ref{eq5a.1})$, there exists a sufficiently small $\delta_1$ such that
$$\d(\Phi_{\alpha}^t(H_{\delta_1}(x))) \leq \frac{\varepsilon_1}{3} \,\,\,\,\,\, \text{for every} \,\,\,\,\,\, t \geq 0.$$
Choose $\lambda>0$, given by uniform continuity of $\phi$, such that
$$d(z,\phi_{\mu}(z))\leq \frac{\varepsilon_1}{3} \,\,\,\,\,\, \text{whenever} \,\,\,\,\,\, |\mu|\leq \lambda \,\,\,\,\,\, \text{and} \,\,\,\,\,\, z\in X,$$
$r_1\in(0,\delta_1)$ satisfying $B(x,r_1)\subset \phi_{[-\lambda, \lambda]}(H_{\delta}(x))$ for all $x\in X$, and $r'\in(0,r_1)$
such that $H_{r'}(y)\subset B(x,r_1)$. Thus, we can choose a map $\mu\colon H_{r'}(y)\to [-\lambda,\lambda]$ such that $$\phi_{\mu(z)}(z)\in H_{\delta}( x) \,\,\,\,\,\, \text{for every} \,\,\,\,\,\, z\in H_{r'}(y).$$
Thus, defining $\beta\in\h_{\varepsilon}(H_{r'}(y))$ as $\beta(a)(t)=\alpha(\phi_{\mu(a)}(a))(t)$ for every $a\in H_{r'}(y)$, it follows that
\begin{align*}
    \d(\mathcal{X}^{t}_{\beta}(H_{r'}(y)))&=\sup_{a,b\in H_{r'}(y)}d(\phi_{\beta(a)(t)}(a),\phi_{\beta(b)(t)}(b) )\\
    &=\sup_{a,b\in H_{r'}(y)}d(\phi_{\alpha(\phi_{\mu(a)}(a))(t)}(a),\phi_{\alpha(\phi_{\mu(b)}(b))(t)}(b) )\\
   &\leq \sup_{a\in H_{r'}(y)}d(\phi_{\alpha(\phi_{\mu(a)}(a))(t)}(a),\phi_{\alpha(\phi_{\mu(a)}(a))(t)+\mu(a)}(a) )\\
    &+\sup_{a,b\in H_{r'}(y)}d(\phi_{\alpha(\phi_{\mu(a)}(a))(t)+\mu(a)}(a), \phi_{\alpha(\phi_{\mu(b)}(b))(t)+\mu(b)}(b))\\
    &+\sup_{b\in H_{r'}(y)}d(\phi_{\alpha(\phi_{\mu(b)}(b))(t)+\mu(b)}(b),\phi_{\alpha(\phi_{\mu(b)}(b))(t)}(b)) )\\   &\leq \frac{\varepsilon_1}{3}+\frac{\varepsilon_1}{3}+\frac{\varepsilon_1}{3}\\
    &= \varepsilon_1,
\end{align*}
for every $t\geq 0$. This does not necessarily prove that $H_{r'}(y) \subset W^{ss}_{\varepsilon_1}(y)$. Indeed, each $a \in H_{r'}(y)$ has $\beta(a)$ as its respective reparametrization, but we have to ensure that $y$ is associated to the identity, so we define
$\beta_1(.)=\beta(.)\circ\beta^{-1}(y)\in \h_{\varepsilon}(H_{r'}(y))$, which ensures that $\beta_1(y)=Id$ and also maintain the inequality
$$ \d(\mathcal{X}_{\beta_1}^t(H_{r'}(y)))\leq \varepsilon_1 \,\,\,\,\,\, \text{for every} \,\,\,\,\,\, t\geq 0.$$
Indeed, note that for each $t\geq 0$ there exists $s\geq 0$ such that $t=\beta(y)(s)$, so
\begin{align*}
   \d(\mathcal{X}_{\beta_1}^t(H_{r'}(y)))&= \sup_{a,b\in H_{r'}(y)}d(\phi_{\beta_1(a)(t)}(a), \phi_{\beta_1(b)(t)}(b))\\
   &=\sup_{a,b\in H_{r'}(y)}d(\phi_{\beta(a)\circ \beta^{-1}(y)(t)}(a),\phi_{ \beta(b)\circ \beta^{-1}(y)(t)}(b))\\
    &=\sup_{a,b\in H_{r'}(y)}d(\phi_{\beta(a)\circ \beta^{-1}(y)(\beta(y)(s))}(a),\phi_{ \beta(b)\circ \beta^{-1}(y)(\beta(y)(s))}(b))\\
    &=\sup_{a,b\in H_{r'}(y)}d(\phi_{\beta(a)(s)}(a),\phi_{ \beta(b)(s)}(b))\\
    &\leq \varepsilon_1.
\end{align*}
Finally, according to Proposition 
\ref{3.02}, there exists $\beta_2\in \h(H_{r'}(y))$ such that $$ \d(\Phi_{\beta_2}^t(H_{r'}(y))\leq \varepsilon\quad \text{for all} \quad t\geq 0.$$
This ensures that $H_{r'}(y)\subset T^s_{\eps}(y)$ and that $y$ is stable.
\end{proof}

\begin{definition}[Recurrent point]
A point $x\in X$ is called recurrent if it belongs to its $\omega$-limit set $\omega(x)$, that is, there exists an increasing sequence $(t_i)_{i\in\N}$ of positive numbers such that $t_i\to\infty$ and $\phi_{t_i}(x)\to x$ when $i\to+\infty$. In particular, for each $\delta>0$ and $t_0>0$, there exists $t>t_0$ such that $\phi_t(x)\in H_\delta(x)$. 
\end{definition}
Recall that $\tau>0$ and $r>0$ are such that $\phi:[-\tau , \tau] \times H(x) \rightarrow X$ is injective and $B(x,r)\subset \phi_{[-\tau, \tau]}H(x)$ for each $x\in X$.
\begin{proposition}\label{reccurentperiodic}
If $(X, \phi)$ is a cw-expansive flow defined on a Peano continuum, then every recurrent and stable point is periodic.
\end{proposition}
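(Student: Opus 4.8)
The plan is to argue by contradiction: assume $x$ is recurrent and stable but not periodic, and produce a non-degenerate continuum lying inside a single transversal section that belongs to $\mathcal{T}^{s}_{\varepsilon_0}\cap\mathcal{T}^{u}_{\varepsilon_0}$, contradicting cw-expansiveness via Proposition \ref{pro3.06}.

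First I would prepare the stable side. Fix $\varepsilon\in(0,\varepsilon_0/2)$. By Proposition \ref{stablesection} there is $\delta_0>0$ with $H_{\delta_0}(x)$ consisting of stable points, and by Theorem \ref{1.02} we may take $\delta\in(0,\delta_0)$ so that $H_\delta(x)$ is a continuum; if it is degenerate then a neighbourhood of $x$ is an orbit segment and a direct one-dimensional argument gives periodicity, so assume it is non-degenerate, say $\diam(H_\delta(x))=t_0>0$. Stability of $x$ then gives $H_\delta(x)\subset CT^{s}_{\varepsilon}(x)\in\mathcal{T}^{s}_{\varepsilon}$, witnessed by some $\alpha\in\h(CT^{s}_{\varepsilon}(x))$, and Proposition \ref{3.09} yields $\diam(\Phi^{t}_{\alpha}(CT^{s}_{\varepsilon}(x)))\to 0$ as $t\to\infty$; in particular $x$ is asymptotically stable. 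Next, using recurrence (in the form recorded after the definition), pick $T_k\uparrow\infty$ with $y_k:=\phi_{T_k}(x)\in H_\delta(x)$ and $y_k\to x$ (projecting the recurrence orbit into the section). Since $x$ is not periodic we have $y_k\ne x$; passing to a subsequence we may take the $y_k$ distinct and the gaps $T_{k+1}-T_k$ bounded below by the monotonicity constant. As $y_k\in CT^{s}_{\varepsilon}(x)$, writing $\alpha_k=\alpha(y_k)\in Rep(\mathbb{R})^{+}$ we obtain, for every $t\ge 0$,
\[
\phi_{\alpha_k(t)+T_k}(x)\in H_{\varepsilon}(\phi_t(x)),\qquad d\!\left(\phi_{\alpha_k(t)+T_k}(x),\phi_t(x)\right)\le \diam\!\left(\Phi^{t}_{\alpha}(CT^{s}_{\varepsilon}(x))\right)\xrightarrow[t\to\infty]{}0 ,
\]
so the forward orbit of $x$ transversely shadows its own shift by $\approx T_k$, with the gap decaying to zero, and by symmetry of the field and stability of $y_k$ we also get $x\in CT^{s}_{\varepsilon}(y_k)$, i.e. the symmetric one‑sided shadowing from $y_k$.

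The heart of the argument is to upgrade this one-sided self-shadowing to a two-sided one while keeping a non-degenerate continuum alive in the limit. The stable continuum of $x$ itself collapses under the flow (Proposition \ref{3.09}), so one cannot simply take limits of $\Phi^{t}_{\alpha}(CT^{s}_{\varepsilon}(x))$; instead I would track a fixed non-degenerate continuum through $x$ — e.g. $H_\delta(x)$, or the sub-continua of $CT^{s}_{\varepsilon}(x)$ joining $x$ to $y_k$ — along the recurrence: translate the time windows by replacing $t$ with $t+T_k$, pass to a Hausdorff limit $D$ and to uniform limits of the reparametrizations via Theorem \ref{1.01}, exactly in the style of the proofs of Propositions \ref{lema3.03} and \ref{3.09}. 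The role of recurrence is to ``wrap'' the backward history of the iterated continuum around near $x$, so that in the limit $D$ lies in a single section $H_{\varepsilon_0}(z)$ and carries a family $\beta\in\h(D)$ with $\diam(\Phi^{t}_{\beta}(D))\le\varepsilon_0$ for \emph{all} $t\in\mathbb{R}$; the Corollary following Proposition \ref{lema3.03} (a continuum of fixed positive diameter cannot have both its forward and backward sectional diameters eventually tiny) is used to prevent the accumulated continuum from degenerating. Then $D\in\mathcal{T}^{s}_{\varepsilon_0}\cap\mathcal{T}^{u}_{\varepsilon_0}$ is non-degenerate and contained in a transversal section, hence cannot be an orbit segment, contradicting Proposition \ref{pro3.06}; this contradiction shows $x$ is periodic.

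The step I expect to be the main obstacle is precisely this last construction: arranging the diagonal/limit so that the limit continuum $D$ is genuinely non-degenerate while the backward reparametrizations stay in the local sections (so that $\Phi^{t}_{\beta}$ is defined for all real $t$). This is exactly where the field of transversal sections and Theorem \ref{1.01} are indispensable, and where one must combine the bounded-below return gaps $T_{k+1}-T_k$ with the decay $d(\phi_{\alpha_k(t)+T_k}(x),\phi_t(x))\to 0$ and the symmetric shadowing $x\in CT^{s}_{\varepsilon}(y_k)$ to force the accumulated continuum both to sit inside one section and to be invariant, up to reparametrization, in both time directions.
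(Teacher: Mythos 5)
Your strategy (assume $x$ is not periodic and contradict Proposition \ref{pro3.06} by producing a non-degenerate continuum $D$ in a section with $\diam(\Phi^{t}_{\beta}(D))\le\varepsilon_0$ for all $t\in\mathbb{R}$) has a genuine gap exactly at the step you flag as the ``main obstacle'': that continuum is never constructed, and the candidates you name cannot supply it. By Proposition \ref{3.09}, the forward sectional images of $H_\delta(x)$, and of any sub-continuum of $CT^{s}_{\varepsilon}(x)$ (in particular the continua joining $x$ to the return points $y_k$, which moreover shrink since $y_k\to x$), collapse in diameter; so Hausdorff limits of these images translated along the return times $T_k$ are singletons and give no contradiction. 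Re-centering at times where the diameter is bounded below does not help either: the function $t\mapsto\diam(\Phi^{t}_{\alpha}(H_\delta(x)))$ is a fixed function of $t$ tending to $0$ (it does not depend on $k$), so such times stay in a bounded set and the backward window of controlled sectional behaviour never grows; there is no two-sided history to pass to the limit. The Corollary after Proposition \ref{lema3.03} cannot rescue this, because it presupposes that both $\Phi^{s}_{\alpha}$ and $\Phi^{-s}_{\alpha}$ of the continuum are defined (i.e.\ the continuum stays in sections in both time directions), which is precisely what is not established. Finally, the only place the hypothesis ``$x$ is not periodic'' enters your sketch is the assertion $y_k\neq x$, and nothing in the argument converts that into a lower diameter bound for any continuum; so even granting all the limit bookkeeping via Theorem \ref{1.01}, the limit $D$ you would obtain is degenerate and no contradiction with cw-expansiveness arises.

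The paper's proof uses the same two ingredients (forward collapse from Proposition \ref{3.09} plus recurrence) but in the opposite way: the ``wrapping around'' of the shrunken image $\Phi^{t_1}_{\alpha}(H_\delta(x))$ near $x$ is used to define a return map $\Phi^{\mu}_{\alpha}\colon H_\delta(x)\to H_\delta(x)$ whose iterates are nested with diameters tending to $0$; their intersection is a single point $y\in H_\delta(x)$, which is therefore periodic, and a final argument using stability of $y$ (via Proposition \ref{stablesection}), $y\in\omega(x)$, Proposition \ref{3.09} and the recurrence of $x$ shows $x$ lies on the orbit of $y$, hence is periodic. In other words, the recurrence-plus-collapse mechanism naturally produces a fixed point of a section map, not a non-degenerate bi-bounded continuum; if you want to keep a contradiction format you would still need essentially this construction, so the missing step in your proposal is not a technical detail but the core of the proof.
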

\begin{proof}
Let $x \in X$ be a recurrent and stable point and $\varepsilon>0$ be a cw-expansiveness constant of $\phi$ associated to $\tau$. Theorem $\ref{1.02}$ and Proposition \ref{stablesection} ensure the existence of $\delta\in(0,r_0)$ such that $H_\delta(y)$ is a continuum for every $y \in X$ and that any $y\in H_{\delta}(x)$ is also a stable point. This and the stability of $x$ ensure that $H_\delta(x) \subset CT_\varepsilon^s(x)$ (reducing $\delta$ if necessary) and since $CT_\varepsilon^s(x)\in\mathcal{T}^s_{\eps}$, Proposition \ref{3.09} ensures the existence of $\alpha \in \h_{\varepsilon}(H_\delta(x))$ such that
\begin{equation}\label{eq5.1}
    \lim _{s \rightarrow \infty}\diam(\Phi_{\alpha}^s(H_\delta(x)))=0.
\end{equation}
Choose $r'\in(0,\delta)$ such that $B(x,r')\subset \phi_{[-\tau, \tau]}H_\delta(x)$ for each $x\in X$. Let $t_0>0$ be such that $$\diam(\Phi_{\alpha}^{t}(H_\delta(x)))<\frac{r'}{2} \,\,\,\,\,\, \text{for every} \,\,\,\,\,\, t\geq t_0$$ and use the recurrence of $x$ to ensure the existence of $t_1>t_0$ such that 
$$\phi_{t_1}(x) \in H_{\frac{r'}{2}}(x) \,\,\,\,\,\, \text{and} \,\,\,\,\,\, \Phi^{t_1}_\alpha (H_\delta(x))\subset H_{\frac{r'}{2}}(\phi_{t_1}(x)).$$ 
Thus, $\Phi^{t_1}_\alpha(H_\delta(x))\subset B(x,r')$ and we can project it into $H_{\delta}(x)$. Consider the map $\mu\colon H_\delta(x)\to\R^+$ such that
$$\Phi^{\mu(y)}_{\alpha}(y)\in H_{\delta}(x) \,\,\,\,\,\, \text{and} \,\,\,\,\,\, \alpha(y)(\mu(y))\in[\alpha(y)(t_1)-\tau,\alpha(y)(t_1)+\tau]$$
for every $y\in H_{\delta}(x)$.
In what follows, $\Phi^{\mu}_{\alpha}(H_\delta(x))$ denotes the set $$\{\Phi^{\mu(y)}_{\alpha}(y); \,\,y\in H_{\delta}(x)\}.$$ Thus, we have
$$\Phi^{\mu}_{\alpha}(H_\delta(x))\subset H_{\delta}(x)$$ 
and $\Phi^{\mu}_{\alpha}$ is a map from $H_\delta(x)$ to itself, so we can consider its iterates $(\Phi^{\mu}_{\alpha})^n$ with $n\in\N$ and obtain
$$\bigcap_{n\geq 0} (\Phi^{\mu}_{\alpha})^n(H_{\delta}(x))=\{y\}\in H_{\delta}(x).$$
(see Figure \ref{fig2.2a}). 
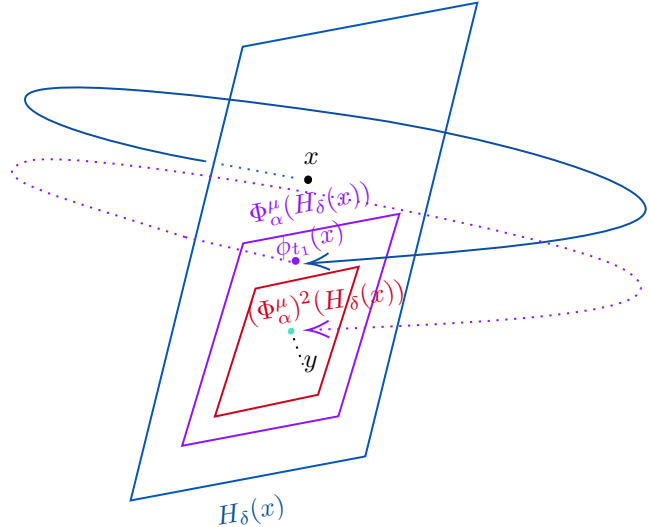
\begin{figure}[ht!]
		\centering
		\label{fig2.2a}
		\tikzset{every picture/.style={line width=0.75pt}} 
		\begin{tikzpicture}[x=0.75pt,y=0.65pt,yscale=-1,xscale=1 ]
			\draw  [color={rgb, 255:red, 2; green, 87; blue, 185 }  ,draw opacity=1 ] (154.08,77.13) -- (272.44,51.48) -- (215.9,315.75) -- (97.54,341.41) -- cycle ;
			\draw  [color={rgb, 255:red, 144; green, 19; blue, 254 }  ,draw opacity=1 ] (154.33,191.64) -- (233.13,174.46) -- (202.34,292.36) -- (123.54,309.54) -- cycle ;
			\draw  [color={rgb, 255:red, 208; green, 2; blue, 27 }  ,draw opacity=1 ] (160.55,217.99) -- (212.57,205.29) -- (192.08,279.83) -- (140.06,292.53) -- cycle ;
			\draw [color={rgb, 255:red, 10; green, 79; blue, 163 }  ,draw opacity=1 ]   (134.59,143.71) .. controls (112.24,139.69) and (39.49,121.44) .. (44.56,106.5) .. controls (49.63,91.56) and (166.53,109.99) .. (217.75,118.96) .. controls (268.97,127.94) and (356.03,152.48) .. (357.32,171.38) .. controls (358.59,190.09) and (254.27,197.35) .. (188.83,203.2) ;
			\draw [shift={(186.86,203.37)}, rotate = 354.85] [color={rgb, 255:red, 10; green, 79; blue, 163 }  ,draw opacity=1 ][line width=0.75]    (10.93,-4.9) .. controls (6.95,-2.3) and (3.31,-0.67) .. (0,0) .. controls (3.31,0.67) and (6.95,2.3) .. (10.93,4.9)   ;
			\draw [color={rgb, 255:red, 144; green, 19; blue, 254 }  ,draw opacity=1 ] [dash pattern={on 0.84pt off 2.51pt}]  (124.99,187.99) .. controls (62.56,164.8) and (19.34,152.99) .. (45.4,143.79) .. controls (71.45,134.59) and (312.79,183.83) .. (346.4,206.94) .. controls (379.84,229.93) and (312.22,236.47) .. (188.56,242.08) ;
			\draw [shift={(186.69,242.17)}, rotate = 357.42] [color={rgb, 255:red, 144; green, 19; blue, 254 }  ,draw opacity=1 ][line width=0.75]    (10.93,-4.9) .. controls (6.95,-2.3) and (3.31,-0.67) .. (0,0) .. controls (3.31,0.67) and (6.95,2.3) .. (10.93,4.9)   ;
			\draw [color={rgb, 255:red, 25; green, 100; blue, 188 }  ,draw opacity=1 ] [dash pattern={on 0.84pt off 2.51pt}]  (134.59,143.71) -- (181.41,153.79) ;
			\draw [color={rgb, 255:red, 144; green, 19; blue, 254 }  ,draw opacity=1 ] [dash pattern={on 0.84pt off 2.51pt}]  (124.99,187.99) -- (178.29,202.4) ;
			\draw  [line width=3] [line join = round][line cap = round] (187.04,154.8) .. controls (187.06,154.67) and (187.08,154.54) .. (187.1,154.41) ;
			\draw  [color={rgb, 255:red, 144; green, 19; blue, 254 }  ,draw opacity=1 ][line width=3] [line join = round][line cap = round] (180.76,201.87) .. controls (180.77,201.87) and (180.78,201.86) .. (180.79,201.86) ;
			\draw  [dash pattern={on 0.84pt off 2.51pt}]  (178.33,242) -- (184.48,262.32) ;
			\draw  [color={rgb, 255:red, 80; green, 227; blue, 194 }  ,draw opacity=1 ][line width=2.25] [line join = round][line cap = round] (178.63,242.67) .. controls (178.15,243.22) and (178.13,243.28) .. (178.46,242.78) ;
			\draw (183.37,137.32) node [anchor=north west][inner sep=0.75pt]    {$x$};
			\draw (167.22,186.32) node [anchor=north west][inner sep=0.75pt]  [color={rgb, 255:red, 144; green, 19; blue, 254 }  ,opacity=1 ,rotate=-345.54]  {$\phi _{t_{1}} (x)$};
			\draw (154.52,167.22) node [anchor=north west][inner sep=0.75pt]  [color={rgb, 255:red, 144; green, 19; blue, 254 }  ,opacity=1 ,rotate=-348.7]  {$\Phi _{\alpha }^{\mu} (H_{\delta}( x))$};
			\draw (153.07,223.47) node [anchor=north west][inner sep=0.75pt]  [color={rgb, 255:red, 208; green, 2; blue, 27 }  ,opacity=1 ,rotate=-351.28,xslant=-0.05]  {$(\Phi _{\alpha }^{\mu})^2(H_{\,\delta}( x))$};
			\draw (183.41,255.56) node [anchor=north west][inner sep=0.75pt]    {$y$};
			\draw (138.15,342.86) node [anchor=north west][inner sep=0.75pt]  [color={rgb, 255:red, 13; green, 81; blue, 162 }  ,opacity=1 ,rotate=-349.71]  {$H_{\delta}( x)$};
		\end{tikzpicture}\\
 \caption{Nested iterations of $H_{\delta}(x)$.}
	\end{figure}
This ensures that $\Phi^{\mu}_{\alpha}(y)=y$ and that $y$ is a periodic point with period at most $\mu(y)$. Moreover, since $y\in H_{\delta}(x)$, it follows that $y$ is stable. We conclude proving that $x$ is in the orbit of $y$. In fact, since $y$ is stable and $y \in \omega(x)$, there exist $r_1\in(0,r)$ and $t_2 > 0$ such that $$\phi_{t_2}(x) \in H_{r_1}(y) \subset T_\varepsilon^s(y)$$ and  Proposition \ref{3.09} ensures the existence of $\beta\in\h_{\varepsilon}(H_{r_1}(y))$ such that $$\lim_{s \rightarrow \infty}\diam(\Phi^s_{\beta}(H_{r_1}(y)))=0.$$ Since $x$ is recurrent, this ensures that $x$ and $y$ must be in the same orbit and, in particular, that $x$ is periodic.
\end{proof}
In Proposition \ref{stablesection}, we proved that if $x$ is stable and $y \in H_\delta(x)$, then $y$ is stable. In the following proposition, we prove that this can be generalized for any point $y$ sufficiently close to some iterate of $x$.

\begin{proposition}\label{prop5a}
If $(X, f)$ is a cw-expansive flow on a Peano continuum, $x\in X$ is stable, and $y \in \alpha(x)$, then $y$ is stable.
\end{proposition}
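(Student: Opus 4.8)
The plan is to reduce the statement, via two steps, to Proposition \ref{stablesection} (``a stable point has a whole section neighbourhood of stable points''). Write $\alpha(x)$ for the $\alpha$-limit set of $x$, so there is $t_n\to+\infty$ with $z_n:=\phi_{-t_n}(x)\to y$.

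\textbf{Step 1: stability is inherited along orbits.} First I would show that if $x$ is stable then $\phi_t(x)$ is stable for every $t\in\mathbb R$; only $t\le 0$ is used below. Fix $\varepsilon\in(0,\varepsilon_0/2)$ and let $\delta_x>0$ be a stability threshold for $x$ at scale $\varepsilon$. If $w$ lies in a sufficiently small section neighbourhood of $\phi_t(x)$, then by uniform continuity of $\phi$ on the compact interval $[0,|t|]$ its orbit stays $\varepsilon$-close to $\{\phi_s(\phi_t(x)):0\le s\le|t|\}$ and at time $|t|$ it is within $\delta_x$ of $x$; projecting onto $H(x)$ and invoking the stability of $x$, the orbit then remains in $H_\varepsilon(\phi_s(\phi_t(x)))$ for all $s\ge 0$. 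Gluing the two reparametrizations and normalizing so that $w$ is carried by the identity — a project-and-reparametrize bookkeeping identical in spirit to the proof of Proposition \ref{3.02} — gives $w\in T^s_\varepsilon(\phi_t(x))$, so $\phi_t(x)$ is stable. In particular each $z_n=\phi_{-t_n}(x)$ is stable, and by Proposition \ref{stablesection} there is $\delta_n>0$ such that every point of $H_{\delta_n}(z_n)$ is stable.

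\textbf{Step 2: passing to the limit.} Since $z_n\to y$, for large $n$ the point $y$ lies in the flow box of $z_n$, so $\pi_{z_n}(y)\in H(z_n)$ is well defined with $d(\pi_{z_n}(y),z_n)\to 0$. If for some such $n$ one has $\pi_{z_n}(y)\in H_{\delta_n}(z_n)$, then $\pi_{z_n}(y)$ is stable (Step 1 conclusion, through Proposition \ref{stablesection}), and since $y=\phi_\sigma(\pi_{z_n}(y))$ for a small $\sigma$, Step 1 again yields that $y$ is stable. Here the hypothesis $y\in\alpha(x)$ is used to have enough freedom: for every $\rho>0$ there are infinitely many times $t$ with $\phi_{-t}(x)\in B_\rho(y)$, so one tries to select $t_n$ with $d(z_n,y)$ far below $\delta_n$. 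Alternatively — and more robustly — one passes to a Hausdorff limit $K$ of the stable continua $CT^s_{\varepsilon_0/2}(z_n)$; using Corollary \ref{3.09a}, whose contraction time does not depend on the base point, together with Theorem \ref{1.01}, one extracts a limiting family of reparametrizations witnessing $K\in\mathcal T^s_{\varepsilon_0/2}$ with base point $y$, and then bootstraps from this stable continuum of $y$ to a whole section neighbourhood of $y$ by re-running the argument of Proposition \ref{stablesection} at $y$: the bounded, base-point--independent ``non-contracting window'' is controlled by uniform continuity of $\phi$, the rest by the uniform contraction of Corollary \ref{3.09a}, and Proposition \ref{lema3.03} keeps the continua from bulging in the interim.

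\textbf{Main obstacle.} The delicate point is precisely this uniformity issue: the thresholds $\delta_n$ of the backward iterates $z_n=\phi_{-t_n}(x)$ produced by Proposition \ref{stablesection} a priori shrink as $n\to+\infty$ (the stable region of $\phi_{-t}(x)$ degrades along the backward flow), so one cannot simply observe that $y\in H_{\delta_n}(z_n)$ for large $n$ and quote Proposition \ref{stablesection}. What rescues the argument is that the contraction time $t_\varepsilon$ in Corollary \ref{3.09a} is independent of the base point, so the time window on which one has no contraction has uniformly bounded length and can be absorbed by ordinary uniform continuity of $\phi$, with Proposition \ref{lema3.03} handling the intermediate times. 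Carrying out this bookkeeping carefully — and checking that all the reparametrizations assemble into honest elements of $Rep(\mathbb R)^+$, respectively of $\mathcal H(\cdot)^+$, exactly as in the proofs of Propositions \ref{3.02} and \ref{lema3.03} — is the bulk of the work.
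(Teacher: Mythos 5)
There is a genuine gap, and it sits exactly where you locate the ``main obstacle.'' Your Step 1 only yields, for each backward iterate $z_n=\phi_{-t_n}(x)$, a stability threshold $\delta_n$ obtained from uniform continuity of $\phi$ over $[0,t_n]$, hence degrading as $t_n\to\infty$, and neither of your two rescues closes this. The first (choosing $t_n$ with $d(z_n,y)$ far below $\delta_n$) cannot be forced: the return times of the backward orbit to a given $\rho$-ball around $y$ are not at your disposal, and for each $\rho$ they may all be so large that the corresponding thresholds are already far below $\rho$; you concede this yourself. The second rescue fails at the ``bootstrap'': a Hausdorff limit $K$ of the continua $CT^s_{\varepsilon_0/2}(z_n)$ may well degenerate to $\{y\}$ (their diameters are only bounded below by the shrinking $\delta_n$), and even when $K$ is nondegenerate, ``re-running the argument of Proposition \ref{stablesection} at $y$'' is not available: that argument takes as \emph{input} that the base point is stable, i.e.\ that a whole section neighbourhood $H_\delta(\cdot)$ sits inside a contracting stable continuum, and it controls small sections at nearby points precisely by projecting them into that neighbourhood; a single stable continuum through $y$ gives no control over the remaining points of $H_\rho(y)$. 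Indeed, the last theorem of the paper shows that \emph{every} point of such a flow carries a nondegenerate continuum in $\mathcal{T}^s_\varepsilon$, while Theorem \ref{NPE3.07} forbids stable points (outside the circle case), so the existence of a stable continuum through $y$ can never by itself yield stability of $y$. Likewise, Proposition \ref{3.09} and Corollary \ref{3.09a} only apply to continua already known to lie in $\mathcal{T}^s$; they cannot be invoked to tame the forward images of $H_\rho(z_n)$ before one knows those sections are stable, which is exactly what has to be proved.

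The idea your proposal is missing is the paper's uniformity step: stability of the backward iterates is established with a threshold \emph{independent of} $t$, and not via finite-time continuity. The paper fixes $\delta>0$ with $H_\delta(x)\subseteq CT^s_\varepsilon(x)$, takes $\delta_1\in(0,\delta)$ given by Proposition \ref{lema3.03} for $\delta$, and combines this with the asymptotic contraction $\diam(\Phi^s_\alpha(H_\delta(x)))\to 0$ of Proposition \ref{3.09} to conclude that the sectional forward flow carries $H_{\delta_1}(\phi_{-t}(x))$ into $H_\delta(x)$; hence $H_{\delta_1}(\phi_{-t}(x))\subseteq T^s_\varepsilon(\phi_{-t}(x))$ with the same $\delta_1$ for all $t\geq 0$. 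Once the threshold is uniform along the backward orbit, the hypothesis $y\in\alpha(x)$ allows one to choose $t$ with $y\in H_{\delta_1}(\phi_{-t}(x))$, and Proposition \ref{stablesection}, applied at $\phi_{-t}(x)$ (whose stability data are now uniform in $t$), gives that $y$ is stable. So the correct mechanism is Proposition \ref{lema3.03} (no intermediate bulge) plus the contraction of the stable section at $x$, rather than base-point-independent contraction times for continua already in $\mathcal{T}^s$; without that step your argument does not go through.
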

\begin{proof}
We assume that $x \in X$ is a stable point and prove that $\phi_{-t}(x)$ is also a stable point for every $t \geq 0$. Indeed, for $\varepsilon > 0$, choose $\delta > 0$ such that $H_\delta(x) \subseteq CT^s_\varepsilon(x)$ and consider $\delta_1\in(0,\delta)$ given by Proposition \ref{lema3.03} for $\delta$. Since $\diam(\Phi^s_{\alpha}(H_{\delta}(x)))\to0$ when $s\to+\infty$ (see Proposition \ref{3.09}) it follows from the choice of $\delta_1$ that
$$\Phi_{\beta}^t(H_{\delta_1}(\phi_{-t}(x)))\subset H_{\delta}(x)$$
for some $\beta\in\mathcal{H}(H_{\delta_1}(\phi_{-t}(x)))$. This ensures that $$H_{\delta_1}(\phi_{-t}(x))\subset T^s_{\eps}(\phi_{-t}(x))$$ and consequently $\phi_{-t}(x)$ is stable. Note that $\delta_1$ does not depend on $t$. To conclude that $y$ is stable it enough to consider $t>0$ such that $y\in H_{\delta_1}(\phi_{-t}(x))$ and use Proposition \ref{stablesection}.
\end{proof}

\begin{proposition}\label{Pro3.06}
If $(X, \phi)$ is a cw-expansive flow on a Peano continuum and $x \in X$ is a stable point, then $x$ is recurrent, therefore periodic.
\end{proposition}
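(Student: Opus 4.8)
The plan is to locate a \emph{stable periodic orbit inside the $\alpha$-limit set of $x$} and then, using the uniform contraction of sectionally geometric stable continua (Proposition \ref{3.09}), to push $x$ itself onto that orbit.

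First I would note that, $X$ being compact, the $\alpha$-limit set $\alpha(x)$ is a nonempty compact invariant set, and Proposition \ref{prop5a} says every point of $\alpha(x)$ is stable. Choosing (Zorn) a minimal set $M\subseteq\alpha(x)$, every point of $M$ is recurrent (being a point of a minimal set) as well as stable, so Proposition \ref{reccurentperiodic} makes every point of $M$ periodic; hence $M=\phi_\R(p)$ is a single periodic orbit of a stable periodic point $p$.

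Next, fixing a cw-expansiveness constant $\varepsilon>0$ associated to $\tau$, the stability of $p$ together with Theorem \ref{1.02} gives $\delta>0$ with $H_\delta(p)$ a continuum and $H_\delta(p)\subset CT^s_\varepsilon(p)\in\mathcal T^s_\varepsilon$, and then Proposition \ref{3.09} provides $\alpha\in\h_\varepsilon(H_\delta(p))$ with base point $p$ (so $\alpha(p)=\mathrm{id}$) such that $\Phi^s_\alpha(H_\delta(p))\subset H_\varepsilon(\phi_s(p))$ for all $s\ge0$ and $\diam(\Phi^s_\alpha(H_\delta(p)))\to0$ as $s\to\infty$. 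Since $p\in\alpha(x)$ I would fix $t_i\to+\infty$ with $\phi_{-t_i}(x)\to p$; for large $i$ the point $\phi_{-t_i}(x)$ lies in the flow box of $H(p)$, so its projection $q_i:=\pi_p(\phi_{-t_i}(x))=\phi_{\theta_i-t_i}(x)$ belongs to $H_\delta(p)$ with $|\theta_i|\le\tau$, and $q_i\to p$ (if $q_i=p$ for some $i$ then $x\in\phi_\R(p)$ already, so assume not). Set $s_i:=\alpha(q_i)^{-1}(t_i-\theta_i)$, which is well defined because $\alpha(q_i)\in Rep(\R)^+$ is onto $[0,\infty)$ and $t_i-\theta_i\ge t_i-\tau>0$ eventually; using that $q\mapsto\alpha(q)(C)$ is continuous for each fixed $C$ with $\alpha(p)=\mathrm{id}$, a subsequence of $(s_i)$ bounded by $C$ would force $t_i-\theta_i=\alpha(q_i)(s_i)\le\alpha(q_i)(C)\to C$, contradicting $t_i-\theta_i\to\infty$, so $s_i\to+\infty$.

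To conclude I would observe that $\phi_{\alpha(q_i)(s_i)}(q_i)=\phi_{t_i-\theta_i}(q_i)=\phi_{t_i-\theta_i}(\phi_{\theta_i-t_i}(x))=x$, while both $\phi_{\alpha(q_i)(s_i)}(q_i)$ and $\phi_{s_i}(p)=\phi_{\alpha(p)(s_i)}(p)$ lie in $\Phi^{s_i}_\alpha(H_\delta(p))$, whence $d(x,\phi_{s_i}(p))\le\diam(\Phi^{s_i}_\alpha(H_\delta(p)))\to0$. Since $s_i\to+\infty$ this gives $x\in\omega(p)=\phi_\R(p)$ (the last equality because $p$ is periodic), so $x\in\phi_\R(p)$: $x$ is periodic, in particular recurrent, which is the assertion. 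I expect the main obstacle to be the bookkeeping in the previous paragraph — proving $s_i\to+\infty$ after inverting the point-dependent reparametrizations $\alpha(q_i)$, and checking that composing with the bounded flow-box corrections $\theta_i$ returns exactly $x$ — which is handled by the continuity built into the definition of $\h(H_\delta(p))$ and the normalization $\alpha(p)=\mathrm{id}$; the rest is a direct use of Theorem \ref{1.02} and Propositions \ref{3.09}, \ref{prop5a}, and \ref{reccurentperiodic}.
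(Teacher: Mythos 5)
Your argument is correct, but it follows a genuinely different route from the paper's. The paper works with an arbitrary $z\in\alpha(x)$: using the stability of $z$ (Proposition \ref{prop5a}) it shows $x\in\omega(z)$ by letting $\phi_{-t}(x)$ enter $T^s_\eta(z)$ and inverting the reparametrization (``choose $s$ with $\alpha(s)=t$'', essentially the same projection-and-inversion bookkeeping you carry out with $\theta_i$, $q_i$, $s_i$), and then uses the stability of $x$ itself together with Proposition \ref{3.09} to transfer $x\in\omega(z)$ into $x\in\omega(x)$, obtaining recurrence first and periodicity only at the end via Proposition \ref{reccurentperiodic}. You instead invoke Zorn to pick a minimal set $M\subset\alpha(x)$, combine Proposition \ref{prop5a} with recurrence of points of $M$ and Proposition \ref{reccurentperiodic} to see that $M$ is a single stable periodic orbit, and then use the contraction of Proposition \ref{3.09} at the periodic point $p$ to conclude $x\in\omega(p)=\phi_{\R}(p)$, so $x$ is outright on a periodic orbit. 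Your route avoids the paper's somewhat delicate final transfer step and gives a cleaner geometric conclusion, at the cost of the minimal-set machinery; the paper's route is shorter and never needs a periodic point before the last line. One caveat: when you set $s_i=\alpha(q_i)^{-1}(t_i-\theta_i)$ and estimate $\alpha(q_i)(s_i)\le\alpha(q_i)(C)$, you are treating the members of the family $\alpha\in\h(H_\delta(p))$ supplied by Proposition \ref{3.09} as increasing surjections of $[0,\infty)$; the definition of $\h$ only demands continuity, so strictly you should justify monotonicity (it follows from the monotone field of sections, as in the ``Increasing'' step of Proposition \ref{3.02}) and surjectivity (if $\alpha(q_i)$ were bounded, the contraction would force $q_i$ onto the orbit of $p$, which finishes the proof anyway). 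This is the same level of implicit convention the paper itself uses when it inverts its reparametrization, so it is a point to make explicit rather than a gap.
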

\begin{proof}
We first prove that if $x$ is stable and $z \in \alpha(x)$, then $x \in \omega(z)$.
Proposition $\ref{prop5a}$ ensures that $z$ is stable. Thus, for each $\eta > 0$ there exists $\gamma > 0$ and $t>0$ such that $$\phi_{-t}(x) \in H_{\gamma}(z) \subset T^{s}_{\eta}(z).$$ Then there exists $\alpha \in Rep(\mathbb{R}^+)$ such that
$$d(\phi_{\alpha(s)}(\phi_{-t}(x)),\phi_s(z))\leq\eps \,\,\,\,\,\, \text{for every} \,\,\,\,\,\, s\geq0.$$
Choosing $s>0$ such that $\alpha(s)=t$, the last inequality becomes
$$d(x,\phi_s(z))\leq \eta.$$ Since this can be done for each $\eta>0$, it follows that $x\in \omega(z)$. Now, since $x$ is stable, there exists $\delta > 0$ such that $H_\delta(x) \subseteq CT^s_\varepsilon(x)$ and Proposition \ref{3.09} ensures that
$$d(\Phi^t_{\alpha}(z),\phi_t(x))\to0 \,\,\,\,\,\, \text{when} \,\,\,\,\,\, t\to+\infty$$ for some $\alpha\in\mathcal{H}(H_{\delta}(x))$. This and $x\in \omega(z)$ ensure that $x\in\omega(x)$, i.e. $x$ is recurrent, while Proposition \ref{reccurentperiodic} ensures that $x$ is periodic.
\end{proof}

Before stating the next result some remarks are in order. 
Consider first the simpler case of homeomorphisms. 
From the definitions, if $X$ is finite (for instance, a single point) then each point is stable (for any homeomorphism, in this case, permutation, of $X$). These examples are also expansive. 
If $X$ is a single point then we have a Peano continuum with an expansive homeomorphism that presents stable points. Thus, to be precise, in the next result we have to exclude the case of $X$ being a circle.

\begin{theorem}\label{NPE3.07}
    Let $(X,\phi_t)$ be a cw-expansive flow defined on a Peano continuum $X$ which is not a circle, then there are no stable points for $\phi_t$ and $\phi_{-t}$.
\end{theorem}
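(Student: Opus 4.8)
The plan is to argue by contradiction. Suppose $\phi_t$ admits a stable point $x$. By Proposition~\ref{Pro3.06}, $x$ is then periodic, say of period $p>0$. The strategy is to exploit the periodicity together with the stability to produce a nontrivial stable continuum through $x$ that is also, by an argument using the compactness of the orbit, a stable continuum for the reverse flow (or else a contradiction with cw-expansiveness via Proposition~\ref{pro3.06}), and then to conclude that $X$ must be a circle. More precisely, since $x$ is stable there is $\delta>0$ with $H_\delta(x)\subset CT^s_{\varepsilon}(x)$ and, by Proposition~\ref{3.09}, $\diam(\Phi^s_\alpha(H_\delta(x)))\to0$ as $s\to\infty$ for a suitable $\alpha\in\h_\varepsilon(H_\delta(x))$. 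Using the period $p$, the iterates $\Phi^{np}_\alpha$ give a nested sequence of subcontinua of $H_\delta(x)$ all containing $x$ whose diameters shrink to $0$; this recovers the self-map structure seen in Proposition~\ref{reccurentperiodic}.

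The key step is to show that Proposition~\ref{Pro3.06} forces \emph{every} point near the periodic orbit to be stable for \emph{both} $\phi_t$ and $\phi_{-t}$. Indeed, by Proposition~\ref{prop5a}, every point of $\alpha(x)=\omega(x)$ (they coincide since $x$ is periodic) is stable for $\phi_t$; but the periodic orbit $O(x)$ equals both its $\alpha$- and $\omega$-limit set, so applying Proposition~\ref{prop5a} to the reverse flow $\phi_{-t}$ — which is also cw-expansive on the same Peano continuum — shows every point of $O(x)$ is stable for $\phi_{-t}$ as well. Now fix such a point and choose $\delta>0$ so small that $H_\delta(x)$ is a continuum contained in $CT^s_\varepsilon(x)\cap CT^u_\varepsilon(x)$, i.e. in $\mathcal{T}^s_\varepsilon\cap\mathcal{T}^u_\varepsilon$. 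By Proposition~\ref{pro3.06}, cw-expansiveness forces $H_\delta(x)=\{x\}$. This means the transversal section $H_\delta(x)$ is trivial at $x$, and by Theorem~\ref{1.02}(3)–(4) and the continuity of the field, the sections $H_\varepsilon(y)$ must degenerate to points in a neighborhood of the orbit.

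The final step is to push this degeneracy globally. Since $X$ is connected and the flow is regular, the set of points whose small transversal sections are trivial is nonempty, and one shows it is open and closed: openness follows from the continuity of the field of transversal sections in Theorem~\ref{1.02}, and closedness is similar. Hence $H_\delta(y)=\{y\}$ for all $y\in X$ and all small $\delta$, which means $X$ is locally homeomorphic to an interval (every point has a neighborhood contained in a short orbit arc $\phi_{[-\tau,\tau]}(y)$). A connected compact $1$-manifold-like space carrying a regular flow is a circle, contradicting the hypothesis that $X$ is not a circle. Applying the same reasoning to $\phi_{-t}$ rules out stable points for the reverse flow, completing the proof.

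The main obstacle I anticipate is the last step: turning the pointwise triviality of transversal sections into the global conclusion that $X$ is a circle. One must carefully use the structure in Theorem~\ref{1.02} — in particular that $H$ is a \emph{continuous} one-parameter family with $H_0(y)=\{y\}$ — to argue that if $H_\delta(x)=\{x\}$ for one point and one small $\delta$, then by continuity and the flow-box property $B_r(y)\subset\phi_{[-\tau,\tau]}(H(y))$ the local sections near $x$ are forced to collapse; and then to propagate this around a connected space without equilibria to get that $X$ is a single periodic orbit, hence a circle. An alternative, possibly cleaner, route to this last step would be to invoke the positive-entropy result from \cite{Cwe1} (cw-expansive flows on spaces of topological dimension $>1$ have positive entropy, which rules out $X$ being essentially one-dimensional): if $X$ is not a circle and has a stable periodic orbit, its topological dimension is $1$ (by the section-collapse argument) and then $X$ must be a circle, again a contradiction.
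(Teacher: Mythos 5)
There is a genuine gap at your ``key step.'' From the hypothesis you only know that $x$ is stable for the \emph{forward} flow $\phi_t$, and Proposition~\ref{prop5a} (applied to $\phi_t$) then gives that every point of $\alpha(x)=O(x)$ is stable for $\phi_t$. But to conclude that the points of $O(x)$ are also stable for the reverse flow $\phi_{-t}$ you invoke Proposition~\ref{prop5a} \emph{for} $\phi_{-t}$, and that proposition needs as input a point that is already stable for $\phi_{-t}$ --- which you do not have. The theorem's conclusion ``no stable points for $\phi_t$ and $\phi_{-t}$'' means no forward-stable points and no backward-stable points (each case handled separately, by symmetry); it is not given, and does not follow from forward stability of a periodic point, that $x$ is stable in both time directions. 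Since your collapse of the section rests on placing $H_\delta(x)$ in $\mathcal{T}^s_\varepsilon\cap\mathcal{T}^u_\varepsilon$ and applying Proposition~\ref{pro3.06}, the whole argument breaks at this point: with only $H_\delta(x)\subset CT^s_\varepsilon(x)$ you get no contradiction from Proposition~\ref{pro3.06}.

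The paper collapses $H_\delta(x)$ without any backward stability: by Proposition~\ref{stablesection} every $z\in H_\delta(x)$ is (forward) stable, hence periodic by Proposition~\ref{Pro3.06}; by Proposition~\ref{3.09} the stable continuum $H_\delta(x)$ contracts, so $z$ is asymptotic to $x$, and two asymptotic periodic points lying in the same transversal section must coincide, giving $H_\delta(x)=\{x\}$ directly. Your final globalization step is also heavier than needed (the open--closed argument for ``trivial sections everywhere'' is not really justified as stated, and the entropy detour via \cite{Cwe1} is vague): once $H_\delta(x)=\{x\}$, the flow-box property $B_{r}(x)\subset\phi_{[-\tau,\tau]}(H(x))$ shows a neighborhood of $x$ lies in the (compact) periodic orbit of $x$, so that orbit is open and closed and, by connectedness, equals $X$, i.e.\ $X$ is a circle. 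If you want to salvage your route, you would need an independent proof that a forward-stable periodic orbit of a cw-expansive flow is also backward stable; as written, that claim is unsupported.
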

\begin{proof}
By contradiction, suppose that $x \in X$ is a stable point and let $\delta>0$ be such that $H_\delta(x) \subset T^s_\varepsilon(x)$. We can assume that $\delta$ also satisfies the conclusion of Proposition \ref{stablesection}, reducing $\delta$ if necessary. Thus, if $z\in H_{\delta}(x)$, then $z$ is stable and
by Proposition \ref{Pro3.06}, periodic. 
By Proposition \ref{3.09} we know that $x$ and $z$ are asymptotic, but being periodic they must be the same point, $z=x$. We conclude that
$H_{\delta}(x)=\{x\}$. This means that the orbit of the periodic point $x$ is an open subset. 
Since $X$ is connected we conclude that $X$ is the orbit of $x$, \textit{i.e.} $X$ is a circle. 
This contradiction proves the result.
\end{proof}

\begin{theorem}
If $(X,\phi_t)$ is a cw-expansive flow defined on a Peano continuum and $\varepsilon > 0$ is a cw-expansivity constant, then there exists $\delta > 0$ such that for each $x \in X$ there exist continua $C_x\in\mathcal{T}^{s}_\varepsilon$ and $D_x\in\mathcal{T}^{u}_\varepsilon$ such that $x \in C_x \cap D_x$ and $\text{diam}(C_x) = \text{diam}(D_x) = \delta$.
\end{theorem}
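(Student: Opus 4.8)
The plan is to adapt Kato's argument for cw-expansive homeomorphisms (\cite{Kato93}) to the sectional framework developed above. First reduce, as throughout the paper, to the case where $\phi$ has no fixed points; we also take $X$ not to be a circle, as in Theorem \ref{NPE3.07} — when $X=S^1$ the transversal sections are singletons, so $T^s_\varepsilon(x)=T^u_\varepsilon(x)=\{x\}$ for every $x$ and there is nothing to prove. By Theorem \ref{NPE3.07} there are then no stable points for $\phi_t$ or for $\phi_{-t}$. Fix the cw-expansivity constant $\varepsilon_0$ associated with $\tau$ and assume $\varepsilon\le\varepsilon_0$ (if not, prove the statement with $\varepsilon_0$ in place of $\varepsilon$, which suffices since $\mathcal{T}^s_{\varepsilon_0}\subseteq\mathcal{T}^s_\varepsilon$). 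Let $\delta_0>0$ come from Proposition \ref{lema3.03} applied with this $\varepsilon$, let $k>0$ be the constant of the corollary following it for the value $\delta_0$, and let $t_{\delta_0}>0$ be furnished by Proposition \ref{3.10a}. Using Theorem \ref{1.02}, fix $\rho\in(0,r_0]$ so small that $H_\rho(y)$ is a continuum for every $y$ and that uniform continuity of $\phi$ keeps every controlled continuum of diameter at most $\rho$ inside the $H_\varepsilon$-sections of its base orbit for a definite reparametrized time.

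The heart of the proof is a uniform lower bound: there is $\eta>0$ such that $\diam CT^s_\varepsilon(x)\ge\eta$ and $\diam CT^u_\varepsilon(x)\ge\eta$ for every $x$. I would establish the stable statement and deduce the unstable one by applying it to $\phi_{-t}$. Fix $x$, and by \cite[Theorem 1.26]{N} an order arc $c\colon[0,1]\to\mathcal{C}(H_\rho(x))$ from $\{x\}$ to $H_\rho(x)$. For each $m\in\N$ choose a maximal subcontinuum $A_m$ along $c$ containing $x$ for which some $\alpha_m\in\h(A_m)^+$ satisfies $\Phi^t_{\alpha_m}(A_m)\subset H_\varepsilon(\phi_t(x))$ for all $t\in[0,m]$ (such $A_m$ exists, since by the choice of $\rho$ very small subcontinua qualify). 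One then shows $\diam A_m\ge\eta$ with $\eta\in(0,\delta_0]$ independent of $x$ and $m$: were the maximal controlled subcontinuum smaller than $\eta$, the next continuum along $c$, of diameter $\eta$, would have to leave some $H_\varepsilon(\phi_t(x))$ within time $m$ under every reparametrization, and then Proposition \ref{lema3.03} — a $\delta_0$-small continuum that has grown past $\varepsilon$ cannot subsequently shrink below $\delta_0$ — together with the uniform transverse cw-expansiveness of Proposition \ref{3.10a} and the uniform-continuity bound built into $\rho$ would bound $m$ from above, contrary to $m$ being arbitrary. Passing to a subsequence and applying Theorem \ref{1.01} to the continua $A_m$ together with their reparametrization fields produces a continuum $C\ni x$ with $\diam C\ge\eta$ and a field $\alpha\in\h(C)^+$ with $\Phi^t_\alpha(C)\subset H_\varepsilon(\phi_t(x))$ for all $t\ge0$; thus $C\in\mathcal{T}^s_\varepsilon$ and $\diam CT^s_\varepsilon(x)\ge\eta$. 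The transversal section is what guarantees $C$ is not contained in an orbit segment, and if the diameters $\diam CT^s_\varepsilon(x)$ were not bounded below one would extract, in the limit over such $x$, a stable point, contradicting Theorem \ref{NPE3.07}; this is where that theorem enters.

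Granting the uniform lower bound, the theorem follows by a trimming step. Since $X$ is compact and $T^s_\varepsilon(x)$ is closed in $X$ — limits of reparametrizations keeping iterates in the $H_\varepsilon$-sections still keep them there, by Theorem \ref{1.01} — the component $CT^s_\varepsilon(x)$ is a subcontinuum of $X$ of diameter at least $\eta$. Put $\delta=\min\{\eta,\eta'\}$, with $\eta'$ the analogous bound for the unstable continua. By \cite[Theorem 1.26]{N} there is an order arc in $\mathcal{C}(CT^s_\varepsilon(x))$ from $\{x\}$ to $CT^s_\varepsilon(x)$; the diameter runs continuously from $0$ to at least $\delta$ along it, hence equals $\delta$ at some parameter, and the corresponding subcontinuum $C_x$ contains $x$, has $\diam C_x=\delta$, and lies in $\mathcal{T}^s_\varepsilon$ because membership in $\mathcal{T}^s_\varepsilon$ passes to subcontinua that contain the base point. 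The same construction for $\phi_{-t}$ gives $D_x\in\mathcal{T}^u_\varepsilon$ with $x\in D_x$ and $\diam D_x=\delta$.

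The main obstacle is the uniform lower bound, and inside it the estimate that the maximal controlled subcontinuum through $x$ on the window $[0,m]$ does not shrink to a point as $m\to\infty$. Two features demand care: reparametrizations act pointwise and are constrained only through the transversal sections, so "the diameter stays small for a definite time" must be extracted from uniform continuity of $\phi$ together with the section constraint rather than from $\phi$ alone; and the limiting step must handle simultaneously the Hausdorff convergence of continua and the convergence of their fields of reparametrizations, which is exactly what Theorem \ref{1.01} provides. Theorem \ref{NPE3.07} is what rules out the degenerate scenario in which the controlled subcontinua collapse onto an orbit.
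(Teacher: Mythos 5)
Your overall scaffolding (reduction to regular flows, the limit step via Theorem \ref{1.01}, closedness of the controlled condition, and the final trimming along an order arc to get diameter exactly $\delta$ --- a step the paper itself leaves implicit, since its construction only yields $\diam D\ge\delta$) is fine. But the heart of your argument, the uniform lower bound $\diam A_m\ge\eta$ independent of $x$ and $m$, is asserted rather than proved, and neither mechanism you offer for it works. First, knowing that the ``next'' continuum of diameter $\eta$ along the order arc must grow past $\varepsilon$ within time $m$ gives no contradiction with $m$ being large: Proposition \ref{lema3.03} then prevents it from shrinking below $\delta_0$ afterwards, and Proposition \ref{3.10a} forces further growth of large sections, but nothing in this bounds $m$ from above --- a small continuum at an arbitrary point may simply grow quickly, whatever $m$ is. Second, the claim that failure of the lower bound lets you ``extract a stable point in the limit'' is a non sequitur: a stable point is one whose \emph{entire} section-neighborhood $H_\delta(x)$ lies in $T^s_\varepsilon(x)$, and the degeneration of the stable continua $CT^s_\varepsilon(x_n)$ at nearby points produces no such neighborhood; if anything, stability would give \emph{large} stable continua, so the implication runs the wrong way. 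Since the uniform lower bound is (by your own nested-intersection reduction) essentially equivalent to the theorem, this is a genuine gap, not a detail.

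The paper's route, which you would need here, uses Theorem \ref{NPE3.07} in the opposite direction and at a different point: given $x$, pick $z\in\omega(x)$ and returns $x_k=\phi_{t_k}(x)\in H_{\delta/2}(z)$; since $z$ is \emph{not} stable for the reversed flow, there is a fixed $t_0$ and $\alpha\in\h(H_{\delta/2}(z))$ with $\d(\Phi^{-t_0}_\alpha(z,H_{\delta/2}(z)))>\varepsilon$, and by continuity of the field of sections the same growth (up to $\varepsilon/2$) holds for $H_\delta(x_k)$ for large $k$. One then selects subcontinua $C_k\ni x_k$ whose backward sectional iterates stay $\le\varepsilon$ on $[0,t_k]$ and reach diameter $\varepsilon/2$ at some $s_k\le t_k$, invokes Proposition \ref{lema3.03} to keep $\d(\Phi^{-t}_\alpha(x_k,C_k))\ge\delta$ for all $t\in[s_k,t_k]$, and passes to the limit of $D_k=\Phi^{-t_k}_\alpha(x_k,C_k)\ni x$ using Theorem \ref{1.01}; this is what produces a continuum through the \emph{original} point $x$ of uniform size lying in $\mathcal{T}^s_\varepsilon$. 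Your direct maximality argument at $x$ bypasses this pull-back from the $\omega$-limit set, and that is exactly the missing idea. A small additional point: your dismissal of the circle case as ``nothing to prove'' is not right --- when $X=S^1$ the sections are singletons and the conclusion (existence of continua of diameter $\delta>0$ in $\mathcal{T}^s_\varepsilon$) actually fails, so the circle must be excluded from the hypotheses, as it is implicitly in the paper through its reliance on Theorem \ref{NPE3.07}.
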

\begin{proof}
Proposition $\ref{lema3.03}$ ensures the existence of $\delta\in(0,\eps)$ corresponding to $\varepsilon/2$. Let $x \in X$ and $z \in \omega(x)$, then there exist $k_0>0$ and a sequence $t_k \rightarrow \infty$ as $k \rightarrow \infty$ such that $x_k = \phi_{t_k}(x) \in H_{\frac{\delta}{2}}(z)$ for every $k \geq k_0$. According to Theorem $\ref{NPE3.07}$, the point $z$ is not stable for $\phi_{-t}$. Therefore, there exists $t_0 > 0$ and an $\alpha \in \h(H_{\frac{\delta}{2}}(z))$ such that
$$\d(\Phi^{-t_0}_\alpha(z,H_{\frac{\delta}{2}}(z)))>\varepsilon.$$
We can assume that $t_k > t_0$ for all $k \geq 1$. 
By the continuity of the field of transversal sections, given $0<\epsilon'<\epsilon$ we have that 
for large enough values of $k$ the next inequality holds
$$\d (\Phi^{-t_0}_\alpha(x_k,H_\delta(x_k)))>\varepsilon/2.$$
Then consider $C_k \subset H_\delta(x_k)$ sub-continua such that $x_k \in C_k$, 
$$\text{diam} (\Phi^{-t}_{\alpha}(x_k, C_k)) \leq \varepsilon \,\,\,\,\,\, \text{for every} \,\,\,\,\,\, t \in [0, t_k] \,\,\,\,\,\, \text{and}$$ 
$$\text{diam} (\Phi^{-s_k}_\alpha(x_k, C_k)) = \varepsilon/2 \,\,\,\,\,\, \text{for some} \,\,\,\,\,\, s_k \in [0, t_k].$$
Thus, according to Lemma $\ref{lema3.03}$, we would have
    $$\d (\Phi^{-t}_\alpha(x_k, C_k))\geq \delta \,\,\,\,\,\, \text{for every} \,\,\,\,\,\, t\geq s_k.$$
    In particular, we have
    \begin{itemize}
		\item $\d (\Phi^{-t_k}_\alpha(x_k, C_k))\geq \delta$,\\
		\item $x \in D_k=\Phi^{-t_k}_\alpha(x_k, C_k)$,\\
		\item $\d (\Phi^{t}_\alpha(x, D_k))\leq \varepsilon$ for every $t\in [0, t_k]$.\\
	\end{itemize}
 Therefore, the set $D = \lim_{k \rightarrow \infty} D_k$ is a continuum, $\text{diam} \ D \geq \delta$, $x \in D$, and $D \in T^{s}_\varepsilon$. We can proceed similarly for the unstable case and conclude the proof.
\end{proof}

\hspace{-0.45cm}\textbf{Acknowledgments.}
Alfonso Artigue was supportes by ANII
and PEDECIBA at Uruguay.
Bernardo Carvalho was supported by Progetto di Eccellenza MatMod@TOV grant number PRIN 2017S35EHN and Margoth Tacuri was also supported by Fapemig grant number APQ-00036-22. This article is part of the Ph.D. thesis of Margoth Tacuri, under the supervision of the other authors, defended at the Federal University of Minas Gerais in 2023.

\vspace{1.5cm}
\noindent

{\em A. Artigue}
\vspace{0.2cm}

\noindent

Departamento de Matem\'atica y Estadística del Litoral,

Universidad de la República,

Gral. Rivera 1350, Salto, Uruguay
\vspace{0.2cm}

\email{artigue@unorte.edu.uy}

\vspace{1.5cm}
\noindent

{\em B. Carvalho}
\vspace{0.2cm}

\noindent

Dipartimento di Matematica,

Università degli Studi di Roma Tor Vergata

Via Cracovia n.50 - 00133

Roma - RM, Italy
\vspace{0.2cm}

\email{carvalho@mat.uniroma2.it}

\vspace{1.5cm}
\noindent

{\em M. Tacuri}
\vspace{0.2cm}

\noindent

Universidade Nacional de Ingeniería - UNI

Av. Túpac Amaru 210, Rímac 15333

Lima, Perú.
\vspace{0.2cm}

simon.tacuri.m@uni.edu.pe

\end{document}